\title{Totally $T$-adic functions of small height}
\author{Xander Faber \\
Center for Computing Sciences \\
Institute for Defense Analyses \\
Bowie, MD \\
\texttt{awfaber@super.org}
\and
Clayton Petsche \\
Oregon State University \\
  Department of Mathematics \\
  Corvallis, OR \\
  \texttt{petschec@math.oregonstate.edu}}
\date{}
\begin{document}
\maketitle

\begin{abstract}
Let $\FF_q(T)$ be the field of rational functions in one variable over a finite
field.  We introduce the notion of a totally $T$-adic function: one that is
algebraic over $\FF_q(T)$ and whose minimal polynomial splits completely over
the completion $\FF_q\Ls{T}$. We give two proofs that the height of a
nonconstant totally $T$-adic function is bounded away from zero, each of which
provides a sharp lower bound.  We spend the majority of the paper providing
explicit constructions of totally $T$-adic functions of small height (via
arithmetic dynamics) and minimum height (via geometry and computer search). We
also execute a large computer search that proves certain kinds of totally
$T$-adic functions of minimum height over $\FF_2(T)$ do not exist. The problem
of whether there exist infinitely many totally $T$-adic functions of minimum
positive height over $\FF_q(T)$ remains open. Finally, we consider analogues of
these notions under additional integrality hypotheses.
\end{abstract}

% \maketitle
% \tableofcontents

%%%%%%%%%%%%%%%%%%%%%%%%%%%%%%%%%%%%%%%%%%%%%%%%%%%%%%%%%%%%%%%%%%%%%%%%%%%%%%%%
%%%%%%%%%%%%%%%%%%%%%%%%%%%%%%%%%%%%%%%%%%%%%%%%%%%%%%%%%%%%%%%%%%%%%%%%%%%%%%%%

\section{Introduction}

\subsection{Small totally $T$-adic algebraic functions}

Let $q$ be a prime power, and let $\FF_q(T)$ be the field of rational functions
in one variable over the finite field $\FF_q$ with $q$ elements.  Let
$\overline{\FF_q(T)}$ be an algebraic closure of $\FF_q(T)$.  The (absolute
logarithmic Weil) height is a function $h:\overline{\FF_q(T)}\to\RR$ which
measures the geometric complexity of an algebraic function $\alpha\in
\overline{\FF_q(T)}$.  We will give the precise definition in
\S\ref{sec:ff_heights}, but here we just report that $h(\alpha)\geq0$ for all
$\alpha\in\overline{\FF_q(T)}$, with $h(\alpha)=0$ if and only if $\alpha$ is
constant, or in other words, an element of the algebraic closure of $\FF_q$ in
$\overline{\FF_q(T)}$.  Moreover, if $\alpha=a(T)/b(T)\in \FF_q(T)$ for coprime
polynomials $a(T),b(T)\in\FF_q[T]$, then we have
$h(\alpha)=\max(\deg(a),\deg(b))$.

One of the properties of the height function is that
$h\left(\alpha^{1/n}\right)=\frac{1}{n}h(\alpha)$ for all
$\alpha\in\overline{\FF_q(T)}$ and all natural numbers $n$, and therefore it is
clear that nonconstant algebraic functions of arbitrarily small height exist.
However, the height of a nonconstant algebraic function $\alpha\in
\overline{\FF_q(T)}$ cannot be too small in terms of its degree
$d=[\FF_q(T)(\alpha):\FF_q(T)]$, as it follows immediately from the definition
that $h(\alpha)\geq1/d$.  A comparable bound in the number field setting would
solve a nearly 100 year old (and still open) problem posed by Lehmer
\cite{Lehmer_height_conjecture}. 

In this paper we consider a question on nonconstant algebraic functions of small
height in $\overline{\FF_q(T)}$ which is similarly inspired by the number field
setting, but which turns out to be more fruitful than the question of Lehmer
type.  Let $\FF_q\Ls{T}$ be the fraction field of the ring $\FF_q\Ps{T}$
of formal power series in $T$, and recall that, in a construction similar to
that of the $p$-adic numbers, $\FF_q\Ls{T}$ is the completion of $\FF_q(T)$
with respect to the valuation $\ord_0$ measuring divisibility of rational
functions by $T$.

We say an algebraic function $\alpha\in \overline{\FF_q(T)}$ is \textbf{totally
  $T$-adic} if the minimal polynomial of $\alpha$ over $\FF_q(T)$ splits
completely over $\FF_q\Ls{T}$.  We denote by
\begin{equation*}
\cT_q=\{\alpha\in \overline{\FF_q(T)}\mid \alpha\text{ is totally $T$-adic}\}
\end{equation*}
the set of totally $T$-adic algebraic functions in $\overline{\FF_q(T)}$. In
\S\ref{sec:totally_T_adic} we will show that $\cT_q$ is a field, and moreover
that it is a subfield of the separable closure $\FF_q(T)^\sep$ of $\FF_q(T)$.

A number of authors have studied totally real or totally $p$-adic algebraic
numbers in $\overline{\QQ}$ of small height; examples include Bombieri/Zannier
\cite{Bombieri_Zannier_Heights_Infinite_Extensions_2001}, Fili
\cite{Fili_totally_p-adic}, Petsche/Stacy \cite{Petsche-Stacy}, Pottmeyer
\cite{Pottmeyer_elementary_bound}, Schinzel \cite{Schinzel_conjugates}, and
Smyth \cite{Smyth_totally_real}.  By analogy with these works, one is led to ask
how small the height $h(\alpha)$ can be for nonconstant $\alpha\in \cT_q$, and
whether such a bound can be given which does not depend on the degree
$d=[\FF_q(T)(\alpha):\FF_q(T)]$.  Our first result gives such an inequality,
which is best possible.

\begin{thmA}
  \label{LowerBoundIntro}
If $\alpha\in \cT_q$ is nonconstant, then $h(\alpha)\geq \frac{1}{q+1}$.
Moreover, there exists $\alpha\in \cT_q$ of degree
$[\FF_q(T)(\alpha):\FF_q(T)]=q+1$ and height $h(\alpha)=\frac{1}{q+1}$.
\end{thmA}

We will give two proofs of the lower bound in this statement.  The first proof
is brief and geometric, so it appears in the next part of the introduction. The
second proof is analytic and inspired by a result on heights of totally $p$-adic
algebraic numbers by Pottmeyer \cite{Pottmeyer_elementary_bound}, following an
earlier unpublished argument of the second author. Considering the partition of
the local field $\FF_q\Ls{T}$ into the three regions $\ord_0<0$, $\ord_0=0$, and
$\ord_0>0$, the proportion of the algebraic conjugates of $\alpha$ lying in each
region may be bounded above in terms of the height $h(\alpha)$ and the
(projective) $T$-adic size of the region.  The inequality $h(\alpha)\geq1/(q+1)$
is the average of these three bounds.  The extremal case $h(\alpha)=1/(q+1)$
occurs precisely when equality occurs in each of the three regional
inequalities.  This observation, together with the $\PGL_2(\bar
\FF_q)$-invariance of the height, forces the degree
$d=[\FF_q(T)(\alpha):\FF_q(T)]$ to be a multiple of $q+1$, and also gives strong
constraints on the $T$-adic locations of the conjugates of such $\alpha$, as we
will describe in \S\ref{sec:min_height}.  These constraints motivate the case of
equality described in the statement of Theorem~A.

It is worth contrasting with the situation of small totally $p$-adic algebraic
numbers.  Pottmeyer \cite{Pottmeyer_elementary_bound} has shown for odd $p$ that
$h(\alpha)>\log(p/2)/(p+1)$ whenever $\alpha\in\bar\QQ$ is totally $p$-adic,
nonzero, and not a root of unity.  This is currently the best known lower bound,
but indeed, as indicated, equality in Pottmeyer's bound is not possible.

%%%%%%%%%%%%%%%%%%%%%%%%%%%%%%%%%%%%%%%%%%%%%%%%%%%%%%%%%%%%%%%%%%%%%%%%%%%%%%%%

\subsection{A geometric approach}

Our geometric proof of Theorem~A begins with the observation
that the minimal polynomial $f(x)$ of a nonconstant algebraic function
$\alpha\in \overline{\FF_q(T)}$ over the ring $\FF_q[T]$ may be viewed as a
polynomial in two variables $x$ and $T$. From that vantage point, it defines an
affine plane curve $C' \subset \Spec \FF_q[T,x]$. Write $C_\alpha$ for the
smooth proper curve birational to $C'$.

Set $n = \deg_T(f)$ and $d = \deg_x(f)$. By the definition of the height in
\S\ref{sec:ff_heights}, we see that $h(\alpha) = n/d$. The variables $T$ and $x$
give rise to rational functions on $C_\alpha$, which in turn yield
$\FF_q$-morphisms $C_\alpha \to \PP^1$ of degrees $d$ and $n$, respectively:
\[
\begin{tikzcd}
  C_\alpha \ar[r,"T"] \ar[d,"x"] & \PP^1 \\
  \PP^1 & 
\end{tikzcd}
\]
Since $\alpha$ is totally $T$-adic, the fiber $T = 0$ splits completely into
rational points. Moreover, $\alpha$ is separable
(Corollary~\ref{cor:separable}), so there are precisely $d$ rational points in
the fiber, and we have
%% (The totally $T$-adic condition implies the factorization of $T$ in
%% the ring of integers $R$ of $\FF_q(T)(\alpha)$ consists of $d$ distinct prime
%% ideals, each of which must have residue degree 1. If we remove all points above
%% $T = \infty$ from the curve $C$, we get $\Spec R$. So $C$ has $d$ rational
%% points in the fiber over $T = 0$.)
\begin{equation}
  \label{eq:pts_upper}
   d \leq \#C_\alpha(\FF_q).
\end{equation}
Every rational point of $C_\alpha$ maps to a rational point of $\PP^1$ under the
morphism $x$. Since $x$ is generically $n$-to-1 and $\#\PP^1(\FF_q) = q+1$, we
find that
\begin{equation}
  \label{eq:pts_lower}
    \#C_\alpha(\FF_q) \leq n(q+1).
\end{equation}
Combining \eqref{eq:pts_upper} and \eqref{eq:pts_lower} shows that
\[
  h(\alpha) \geq \frac{1}{q+1}.
  \]

A strength of this approach to Theorem~A is that it leads to
a geometric interpretation of the condition that a nonconstant $\alpha \in
\cT_q$ has minimal height. Recall that the \textbf{gonality} of an algebraic curve
$X/\FF_q$ is the minimal degree of an $\FF_q$-morphism from $X$ to $\PP^1$.

\begin{thmB}\label{GonalityThmIntro}
Let $\alpha\in \cT_q$ be nonconstant of degree $d=[\FF_q(T)(\alpha):\FF_q(T)]$
and with minimal height $h(\alpha)=1/(q+1)$, and write $C_\alpha$ for the smooth
proper algebraic curve whose function field is $\FF_q(T)(\alpha)$.  Then
\begin{equation*}
\# C_\alpha(\FF_q)=d=n(q+1),
\end{equation*}
where $n$ is the gonality of $C_\alpha$.  Moreover, $n$ is also equal to the
number of algebraic conjugates $\beta$ of $\alpha$ with $\ord_0(\beta)>0$.
\end{thmB}

In fact, a type of converse to Theorem~B (together with its
preceding discussion) holds.  If $X/\FF_q$ is any smooth, proper, geometrically
irreducible curve with $\# X(\FF_q)=n(q+1)$ for some $n\geq1$, and if
$\FF_q(X)=\FF_q(u,v)$ for separable rational functions $u$ and $v$ satisfying
the expected properties, then in fact $X$ is isomorphic to $C_\alpha$ for some
nonconstant $\alpha\in \cT_q$ of minimal height $h(\alpha)=1/(q+1)$.  The
details are given in \S\ref{sec:geometry}, along with estimates on the genera of
these special curves.

%%%%%%%%%%%%%%%%%%%%%%%%%%%%%%%%%%%%%%%%%%%%%%%%%%%%%%%%%%%%%%%%%%%%%%%%%%%%%%%%

\subsection{Cases of minimal height}

In view of the preceding results, it is natural to ask:

\begin{question}\label{OpenQuestionInfinite}
For a given prime power $q$, do there exist infinitely many nonconstant $\alpha\in \cT_q$ of minimal height $h(\alpha)=\frac{1}{q+1}$?  
\end{question}

This question is still open for every $q$.  Heuristic considerations based on
the paper \cite{BGMR_splitting} suggest that the answer may be ``yes'' when $q >
2$, and ``no'' when $q=2$.

By Northcott's theorem, Question \ref{OpenQuestionInfinite} is tantamount to
asking whether there exist nonconstant $\alpha\in \cT_q$, of minimal height
$h(\alpha)=\frac{1}{q+1}$, and of arbitrarily large degree
$d=[\FF_q(T)(\alpha):\FF_q(T)]$.  And according to
Theorem~B, yet another formulation of this question is to
ask whether the curves $C_\alpha/\FF_q$ can have arbitrarily large gonality for
nonconstant $\alpha\in \cT_q$ with $h(\alpha)=1/(q+1)$.  Thus the following may
be viewed as a refinement of Question \ref{OpenQuestionInfinite}.

\begin{question}\label{OpenQuestionParticularGonality}
For a given prime power $q$ and integer $n\geq1$, does there exist a nonconstant
$\alpha\in \cT_q$ of minimal height $h(\alpha)=\frac{1}{q+1}$ such that the
curve $C_\alpha$ has gonality $n$?
\end{question}

The case of equality described in Theorem~A gives an
affirmative answer to the $n=1$ case of this question for all $q$.  The
construction is elementary and the associated algebraic curves $C_\alpha$ are
all rational, so no interesting geometry is involved.

In \S\ref{sec:examples} we give geometric constructions to go beyond the $n=1$
case. Using cyclic $n$-covers of the projective line we produce examples
affirming Question~\ref{OpenQuestionParticularGonality} when $n\mid q-1$, and we
use hyperelliptic curves to produce examples when $n = 2$ and $q\geq4$ is even.

None of the general constructions described in \S\ref{sec:examples} yields an
affirmative answer to Question~\ref{OpenQuestionParticularGonality} for $n\geq
q$.  However, in \S\ref{sec:naive_algorithm} we describe an algorithm which uses
the ideas from the analytic proof of Theorem~A to
search for examples, and the algorithm succeeds in confirming a positive answer
to Question~\ref{OpenQuestionParticularGonality} in the new case $q=3$ and $n=3$.

In what may be an indication of the general difficulty of
Question~\ref{OpenQuestionParticularGonality}, there do exist $q$ and $n$ for
which the answer is ``no''.  In \S\ref{sec:non-existence} we describe an
exhaustive search algorithm which gives a negative answer to
Question~\ref{OpenQuestionParticularGonality} in the cases $q=2$ and
$n=2,3,4$. We used Sage \cite{sagemath.9.1} to carry out these calculations. The
case $q = 2$ and $n = 4$ was a substantial computational challenge. Our code is
available at
\begin{center}
  \url{https://github.com/RationalPoint/T-adic}.
\end{center}

%%%%%%%%%%%%%%%%%%%%%%%%%%%%%%%%%%%%%%%%%%%%%%%%%%%%%%%%%%%%%%%%%%%%%%%%%%%%%%%%

\subsection{Small height and large degree via arithmetic dynamics}

As we have seen in the preceding section, the only known cases of equality in
the sharp lower bound $h(\alpha)\geq 1/(q+1)$ for nonconstant $\alpha\in\cT_q$
occur for $\alpha$ of degree $d=n(q+1)$ with $n\leq q$.  In particular, there
are only finitely many known cases of equality (for each given prime power $q$)
by Northcott's theorem.  Thus it is reasonable to ask whether a larger lower
bound is possible if one is willing to ignore some finite subset of $\cT_q$.  We
do not know the answer to this question, but we can show that no lower bound
greater than $1/(q-1)$ is possible, even when one is allowed to remove any
finite subset of $\cT_q$ from consideration.

Let's be more precise. Given a subfield $L$ of $\overline{\FF_q(T)}$, choose
an ordering $L = \{\alpha_1, \alpha_2, \ldots\}$ and define
\[
\liminf_{\alpha \in L} h(\alpha) = \lim_{n \to \infty} \inf \big\{ h(\alpha_i) \ : \ i \geq n \big\}.
\]
It is independent of the chosen ordering. 

\begin{thmC}\label{LimInfIntro}
For each prime power $q$, we have
\begin{equation}\label{LimInfIntroBounds}
\frac{1}{q+1} \leq \liminf_{\alpha\in \cT_q}h(\alpha) \leq \frac{1}{q-1}.
\end{equation}
\end{thmC}

The lower bound in \eqref{LimInfIntroBounds} is immediate from Theorem~A, as the
only $\alpha\in\cT_q$ with $h(\alpha)<1/(q+1)$ are the elements of $\FF_q$.  To
prove the upper bound in \eqref{LimInfIntroBounds}, we use a dynamical
construction inspired by a result of Petsche-Stacy \cite{Petsche-Stacy} on
totally $p$-adic algebraic numbers, which was in turn inspired by work of Smyth
\cite{Smyth_totally_real} in the totally real case.

The construction makes use of the polynomial
$\phi(x)=\frac{1}{T}(x^q-x)\in\FF_q(T)[x]$, which restricts to a $q$-to-$1$ map
from the local ring $\FF_q\Ps{T}$ onto itself.  Thus for each $m\geq1$, the
iterated inverse image $\phi^{-m}(1)$ of the constant $1$ is a set of $q^m$
distinct totally $T$-adic algebraic functions.  Defining recursively
$\alpha_0=1$ and $\phi(\alpha_{j+1})=\alpha_j$, we can show that
$h(\alpha_j)\to1/(q-1)$ as $j\to\infty$.  The details are worked out in
\S\ref{sec:small_height}.

\begin{question}\label{OpenQuestionLiminf}
What is the value of $\liminf_{\alpha\in\cT_q}h(\alpha)$?  Is it equal to the
minimum height $\frac{1}{q+1}$ for nonconstant $\alpha\in\cT_q$?
\end{question}

We do not know the answer to this question.  Note that an affirmative answer to
Question~\ref{OpenQuestionInfinite} would imply an affirmative answer to
Question~\ref{OpenQuestionLiminf}.  In the setting of totally real algebraic
integers $\alpha$, the smallest positive height occurs for the golden ratio
$\alpha = \frac{1+\sqrt{5}}{2}$ (a result of Schinzel
\cite{Schinzel_conjugates}), but Smyth has shown that there is a gap between
this minimum positive height and the liminf.

%%%%%%%%%%%%%%%%%%%%%%%%%%%%%%%%%%%%%%%%%%%%%%%%%%%%%%%%%%%%%%%%%%%%%%%%%%%%%%%%

\subsection{Totally $T$-adic integers and totally $T$-adic units}

The ideas behind Theorem~A and Theorem~C can be extended to give stronger
conclusions for $\alpha\in\cT_q$ whose conjugates satisfy $T$-adic integrality
restrictions.  Define a subring $\cR_q$ of $\cT_q$ by
\[
\cR_q = \{\alpha\in\cT_q\mid \ord_0(\beta)\geq0\text{ for all conjugates $\beta\in\FF_q\Ls{T}$ of $\alpha$}\}
\]
and consider its group of units
\[
\cR_q^\times = \{\alpha\in\cT_q\mid \ord_0(\beta)=0\text{ for all conjugates $\beta\in\FF_q\Ls{T}$ of $\alpha$}\}.
\]
We call $\cR_q$ the ring of \textbf{totally $T$-adic integers} in
$\overline{\FF_q(T)}$, and $\cR_q^\times$ the group of \textbf{totally $T$-adic
  units}.

The following results may be compared to Theorem~A and Theorem~C, respectively.

\begin{thmD}\label{IntUnitsLBIntro}
Let $q$ be a prime power.  If $\alpha\in \cR_q$ is nonconstant, then
$h(\alpha)\geq \frac{1}{q}$.  If $\alpha\in \cR_q^\times$ is nonconstant, then
$h(\alpha)\geq \frac{1}{q-1}$.  Both inequalities are sharp.
\end{thmD}

\begin{thmE}\label{IntUnitsLimInfIntro}
Let $q$ be a prime power.  We have
\begin{equation*}
\frac{1}{q} \leq \liminf_{\alpha\in\cR_q}h(\alpha) \leq \frac{1}{q-1}
\end{equation*}
and 
\begin{equation*}
\frac{1}{q-1} \leq \liminf_{\alpha\in\cR_q^\times}h(\alpha) \leq 
\begin{cases}
\frac{1}{q-2} & \text{ if } q\neq2 \\
2 & \text{ if } q=2.
\end{cases}
\end{equation*}
\end{thmE}

%%%%%%%%%%%%%%%%%%%%%%%%%%%%%%%%%%%%%%%%%%%%%%%%%%%%%%%%%%%%%%%%%%%%%%%%%%%%%%%%
%%%%%%%%%%%%%%%%%%%%%%%%%%%%%%%%%%%%%%%%%%%%%%%%%%%%%%%%%%%%%%%%%%%%%%%%%%%%%%%%

\section{Preliminaries}

\subsection{Function field heights}
\label{sec:ff_heights}

Let $\alpha$ be algebraic over $\FF_q(T)$. There is a unique monic irreducible
polynomial $f_0 \in \FF_q(T)[x]$ such that $f_0(\alpha) = 0$. After clearing
denominators of the coefficients and perhaps multiplying by an appropriate
element of $\FF_q$, we arrive at an irreducible polynomial $f \in \FF_q[T][x]$
such that:
\begin{itemize}
\item $f$ is irreducible as a polynomial in $x$;
\item $f(\alpha) = 0$;  
\item The coefficients of $f$ are polynomials in $\FF_q[T]$ with no common factor; and
\item The leading coefficient of $f$ is monic.
\end{itemize}
The unique polynomial with these properties will be called the \textbf{minimal
  polynomial of $\alpha$}.

Take $\alpha$ and $f$ as in the preceding paragraph. We define the
\textbf{(absolute) height} of $\alpha$ to be
\[
  h(\alpha) = \frac{\deg_T(f)}{\deg_x(f)}.
  \]
We also define $h(\infty) = 0$ in order to have a definition of the height on
all of $\PP^1(\overline{\FF_q(T)})$.

\begin{example}
  Let $\alpha \in \FF_{q^d}^\times$. Then the minimal polynomial of $\alpha$ is a
  divisor of $x^{q^d} - x$. In particular, it does not depend on $T$, so that
  $h(\alpha) = 0$. 
\end{example}

We now reformulate the height as a sum of local contributions. Before doing so,
we need a lemma that relates the valuations of the ``large'' roots of a
polynomial to the valuation of its leading coefficient. 

\begin{lemma}
  Let $(R,v)$ be a discrete valuation ring, and let $g(x) = g_d x^d + \cdots +
  g_0 \in R[x]$ be a separable polynomial such that $g_d \ne 0$ and
  $\min\big(v(g_i) : i \geq 0\big) = 0$. Choose an extension of $v$ to the
  splitting field of $g$. Write $\beta_1, \ldots, \beta_d$ for the roots of
  $g$. Then
  \begin{equation}
    \label{eq:valuation}
    v(g_d) = \sum_{i=1}^d \max\big(0,-v(\beta_i)\big).
  \end{equation}
  If $v(g_d) = 0$, then all terms in the sum vanish. 
\end{lemma}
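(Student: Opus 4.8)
The plan is to prove the identity by passing to the splitting field of $g$ and exploiting the ultrametric inequality. Over the splitting field we may factor $g(x) = g_d\prod_{i=1}^d(x-\beta_i)$, so comparing coefficients gives $g_j = (-1)^{d-j}g_d\, e_{d-j}(\beta_1,\dots,\beta_d)$ for $0\le j\le d$, where $e_k$ denotes the $k$-th elementary symmetric polynomial. Hence $v(g_j) = v(g_d) + v(e_{d-j}(\beta))$; as $j$ runs over $\{0,\dots,d\}$ so does $d-j$, and the normalization $\min_i v(g_i)=0$ becomes
\[
v(g_d) \;=\; -\min_{0\le k\le d} v\big(e_k(\beta)\big).
\]
Thus the lemma reduces to the combinatorial identity $\min_{0\le k\le d} v(e_k(\beta)) = \sum_{i=1}^d \min\big(0,v(\beta_i)\big)$, using $\min(0,t) = -\max(0,-t)$.

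To establish this, relabel the roots so that $v(\beta_1)\le\cdots\le v(\beta_d)$ and let $k^*$ be the number of indices with $v(\beta_i)<0$. For any $k$ the ultrametric inequality gives $v(e_k(\beta))\ge \min_{|S|=k}\sum_{i\in S}v(\beta_i) = \sum_{i=1}^k v(\beta_i)$ (the minimum over size-$k$ subsets being achieved by the $k$ smallest valuations), and the function $k\mapsto\sum_{i=1}^k v(\beta_i)$ strictly decreases for $k\le k^*$ and is non-decreasing afterwards, so it is minimized at $k=k^*$, with value $\sum_{i=1}^{k^*}v(\beta_i) = \sum_{i=1}^d\min(0,v(\beta_i))$. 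This already gives $\min_k v(e_k(\beta))\ge\sum_{i=1}^d\min(0,v(\beta_i))$; the reverse inequality will follow once we show $v(e_{k^*}(\beta))$ equals \emph{exactly} $\sum_{i=1}^{k^*}v(\beta_i)$.

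This exact evaluation is the only delicate point, and it is where I expect the real work to lie: one must rule out cancellation among the monomials comprising $e_{k^*}(\beta)$. The key is that there is a strict gap between the valuations of the large roots and the rest, since $v(\beta_{k^*})<0\le v(\beta_{k^*+1})$. Any size-$k^*$ subset $S\neq\{1,\dots,k^*\}$ necessarily omits some index $\le k^*$ and includes some index $>k^*$, so $\sum_{i\in S}v(\beta_i) - \sum_{i=1}^{k^*}v(\beta_i) \ge 0 - (\text{a strictly negative sum}) > 0$. Hence the monomial $\beta_1\cdots\beta_{k^*}$ is the unique term of $e_{k^*}(\beta)$ of minimal valuation, and the ultrametric inequality is an equality there: $v(e_{k^*}(\beta)) = \sum_{i=1}^{k^*}v(\beta_i)$. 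Assembling the reductions yields $v(g_d) = \sum_{i=1}^d\max(0,-v(\beta_i))$, and when $v(g_d)=0$ this forces every (nonnegative) summand to vanish, i.e. $v(\beta_i)\ge 0$ for all $i$.

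Conceptually, the identity is just the statement that the Newton polygon of $g$ with respect to $v$, which touches height $0$ by the normalization, rises to height $v(g_d)$ along its positive-slope segments on the right, and these segments account precisely for the roots of negative valuation weighted by $-v(\beta_i)$; I would present the elementary-symmetric-function argument above to keep the proof self-contained but mention this viewpoint as the reason the identity holds. Note finally that separability of $g$ plays no role in the argument — the factorization and coefficient comparison remain valid with multiplicities — it is assumed only because $g$ is separable in the intended application.
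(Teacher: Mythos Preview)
Your proof is correct and follows essentially the same route as the paper: both arguments isolate the critical elementary symmetric function $e_{k^*}(\beta)$ (in the paper's notation, the coefficient $g_\ell$ with $\ell = d - k^*$) and show it has a unique monomial of minimal valuation, namely the product of the roots with negative valuation. The only organizational difference is that the paper invokes the Newton polygon to locate $\ell$ and the sign change in the root valuations, whereas you define $k^*$ directly from the roots and carry out the symmetric-function estimate from scratch; your remark at the end that this is ``just'' the Newton polygon is exactly the paper's starting point.
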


\begin{proof}
  If $v(g_d) = 0$, then the Newton polygon for $g$ shows that all of its roots
  have nonnegative valuation. Consequently, the sum in \eqref{eq:valuation}
  vanishes.

  Now suppose that $v(g_d) > 0$.  Let $\ell$ be the largest index such that
  $v(g_\ell) = 0$. Without loss of generality, let us suppose that the roots of
  $g$ are ordered by valuation as
  \[
  v(\beta_d) \leq v(\beta_{d-1}) \leq \cdots \leq v(\beta_1).
  \]
  Looking at the Newton polygon for $g$, we see that
  \[
  v(\beta_{\ell+1}) < 0 \le v(\beta_\ell).
  \]
  The coefficient $g_\ell$ can be expressed in terms of the roots as
  \[
  g_\ell = \pm g_d \sum_{i_1 < \cdots < i_{d - \ell}} \beta_{i_1} \cdots \beta_{i_{d-\ell}}.
  \]
  The unique term in this sum with smallest valuation is $\beta_{\ell+1} \cdots
  \beta_d$, so taking valuations of both sides gives
  \[
  0 = v(g_\ell) = v(g_d) + \sum_{i = \ell+1}^d v(\beta_i)
  = v(g_d) + \sum_{i=1}^d \min\big(0,v(\beta_i)\big).
  \]
  This is equivalent to \eqref{eq:valuation}.
\end{proof}

The closed points of the projective line $\PP^1_{\FF_q}$, denoted
$|\PP^1_{\FF_q}|$, are in bijective correspondence with the places of
$\FF_q(T)$; to a point $Q$, we associate the order of vanishing of a function at
$Q$, denoted $\ord_Q$. If $Q = \infty$, then $\ord_Q(f) = -\deg(f)$ for any
polynomial $f \in \FF_q[T]$. If $Q \ne \infty$, then $Q$ may be identified with
a unique monic irreducible polynomial $f_Q$ in $\FF_q[T]$, and $\ord_Q$
corresponds to the order to which $f_Q$ divides a rational function in
$\FF_q(T)$. Each of the functions $\ord_Q$ is a discrete valuation. We abuse
notation and continue to write $\ord_Q$ for a fixed extension of the valuation
to $\overline{\FF_q(T)}$.

\begin{proposition}
  Let $\alpha \in \overline{\FF_q(T)}$ have minimal polynomial $f \in
  \FF_q[T][x]$, and suppose that $\deg(f) = d$. Write $\alpha = \alpha_1,
  \ldots, \alpha_d$ for the roots of $f$ (with appropriate multiplicity if $f$
  is inseparable). Then
\[
h(\alpha) = \frac{1}{d} \sum_{Q \in |\PP^1_{\FF_q}|} \ \sum_{i=1}^d \max\big(0, -\ord_Q(\alpha_i)\big),
\]
where all but finitely many terms in this sum vanish.
\end{proposition}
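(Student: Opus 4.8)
The plan is to read off the integer $\deg_T(f)=d\,h(\alpha)$ one place at a time from the lemma above, splitting $\deg_T(f)$ into the part ``carried'' by the leading coefficient $f_d$ --- which is exactly what the finite places detect --- and the part that appears at $\infty$. First I would remove the inseparable case: if $f$ is inseparable, write $f(x)=\tilde f(x^{p^e})$ with $\tilde f\in\FF_q[T][x]$ separable and irreducible, so that $\tilde f$ is the minimal polynomial of $\alpha^{p^e}$ and each root $\beta$ of $\tilde f$ contributes the single root $\beta^{1/p^e}$ of $f$ with multiplicity $p^e$. Combining $\ord_Q(\beta^{1/p^e})=\ord_Q(\beta)/p^e$, the scaling $\max(0,-t/p^e)=\max(0,-t)/p^e$, and the relations $h(\alpha)=h(\alpha^{p^e})/p^e$ and $d=p^e\deg_x(\tilde f)$, one checks that the formula for $f$ follows formally from the formula for $\tilde f$; so I may assume $f$ is separable. (The multiset $\{\ord_Q(\alpha_i)\}_{i=1}^d$ is independent of the chosen extension of $\ord_Q$ to $\overline{\FF_q(T)}$, because the $\alpha_i$ form a single Galois orbit.)

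At a finite place $Q$, corresponding to the monic irreducible $f_Q\in\FF_q[T]$, I would apply the lemma over the discrete valuation ring $\FF_q[T]_{(f_Q)}$ with valuation $\ord_Q$: the polynomial $f$ is separable, its leading coefficient $f_d$ is nonzero, and the minimum of the $\ord_Q$-values of the coefficients of $f$ is $0$ because those coefficients have no common factor. The lemma gives $\ord_Q(f_d)=\sum_{i=1}^d\max(0,-\ord_Q(\alpha_i))$. At $Q=\infty$ the lemma does not apply to $f$ as written, since the minimum of the $\ord_\infty$-values of the coefficients of $f$ equals $-n$, where $n=\deg_T(f)$. To repair this, set $s=1/T$ and pass to the homogenization $F(s,x)=s^n f(1/s,x)\in\FF_q[s][x]$. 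Over $\FF_q(s)=\FF_q(T)$ the polynomials $F$ and $f$ differ by the unit $s^n$, so $F$ is separable with the same $x$-roots $\alpha_1,\dots,\alpha_d$; the coefficients of $F$ have no common factor in $\FF_q[s]$, the minimum of their orders at $s=0$ being $n-\max_i\deg_T(f_i)=0$; and the leading coefficient $s^n f_d(1/s)$ of $F$ has order $n-\deg_T(f_d)$ at $s=0$, because $f_d$ is monic in $T$. Applying the lemma over $\FF_q[s]_{(s)}$ with its valuation $\ord_\infty$ then yields $n-\deg_T(f_d)=\sum_{i=1}^d\max(0,-\ord_\infty(\alpha_i))$.

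To assemble: since $f_d\in\FF_q[T]$ is monic, $\deg_T(f_d)=\sum_{Q\neq\infty}[\kappa(Q):\FF_q]\,\ord_Q(f_d)$. Substituting the finite-place identity for $\ord_Q(f_d)$, adding the contribution from $\infty$ (whose closed point has residue degree $1$), and using $\deg_T(f)=\deg_T(f_d)+\big(n-\deg_T(f_d)\big)$, we find
\[
d\,h(\alpha)=\deg_T(f)=\sum_{Q\in|\PP^1_{\FF_q}|}[\kappa(Q):\FF_q]\,\sum_{i=1}^d\max\big(0,-\ord_Q(\alpha_i)\big),
\]
which is the asserted formula, the sum over closed points being weighted by residue degree so as to reconstruct $\deg_T(f)$. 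Only finitely many terms are nonzero: $\ord_Q(f_d)=0$ except at the finitely many $Q$ dividing $f_d$, and by the lemma a conjugate of $\alpha$ can have negative valuation at $Q$ only when $\ord_Q(f_d)>0$. The one genuinely non-routine point is the place at infinity: there the hypothesis $\min_i v(g_i)=0$ of the lemma fails for $f$, and one must pass to $F$ and use the monicity of $f_d$ to control how the leading coefficient --- and hence the output of the lemma --- is affected by the rescaling. Everything else is the Galois-orbit and product-formula bookkeeping indicated above.
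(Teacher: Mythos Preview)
Your approach is essentially the paper's: apply the preceding lemma at each finite place $Q$ to obtain $\ord_Q(f_d)=\sum_i\max(0,-\ord_Q(\alpha_i))$, treat $Q=\infty$ by a rescaling so that the minimum-valuation hypothesis of the lemma holds, and then add. Your substitution $s=1/T$ and passage to $F(s,x)=s^nf(1/s,x)$ is equivalent to the paper's device of applying the lemma to $T^{-n}f$; both yield $n+\ord_\infty(f_d)=\sum_i\max(0,-\ord_\infty(\alpha_i))$.

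Two points are worth flagging. First, your explicit reduction to the separable case is a genuine addition: the lemma is stated only for separable $g$, and the paper's proof applies it to $f$ without addressing this, so your reduction patches a small gap. Second, you end up with the formula weighted by $[\kappa(Q):\FF_q]$, whereas the paper's displayed statement is unweighted. Here you are correct and the paper is imprecise: when the paper sums the local identities and invokes ``$\sum_Q\ord_Q(a_d)=0$'', that product-formula identity only holds after inserting residue-degree weights. A concrete witness is $f(x)=(T^2+T+1)x-1$ over $\FF_2$: then $h(\alpha)=2$, but the unweighted sum $\sum_Q\max(0,-\ord_Q(\alpha))$ equals $1$. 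This never affects the paper's applications, which use only the rational place $T=0$, but your weighted version is the one that is literally true.
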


\begin{proof}  
  Let $f = a_d x^d + \cdots + a_0$ be the minimal polynomial of $\alpha$, so
  that the coefficients lie in $\FF_q[T]$ and have no common factor. Fix $Q \ne
  \infty$. Then $\ord_Q$ measures divisibility by a certain monic irreducible
  polynomial, and hence $\min\big(\ord_Q(a_i) : i \geq 0\big) = 0$. We apply the preceding lemma to deduce that
  \begin{equation}
    \label{eq:local1}
  \ord_Q(a_d) = \sum_{i=1}^d \max\big(0, -\ord_Q(\alpha_i)\big),
  \end{equation}
  and if $\ord_Q(a_d) = 0$, then all terms in this sum vanish.

  Now consider the case $Q = \infty$. For a polynomial $g \in \FF_q[T]$, we have
  $\ord_\infty(g) = -\deg_T(g)$. Let $n = \deg_T(f)$. Then
  \[
  \min \big(\ord_\infty(a_i T^{-n}) : i \geq 0\big) = 0.
  \]
  So we may apply the preceding lemma to $T^{-n} f$ to arrive at
  \begin{equation}
    \label{eq:local2}
  n + \ord_\infty(a_d) = \sum_{i=1}^d \max\big(0, -\ord_\infty(\alpha_i)\big).
  \end{equation}
  Adding \eqref{eq:local1} over all $Q \neq \infty$ and \eqref{eq:local2} gives
  \[
  \deg_T(f) + \sum_Q \ord_Q(a_n) = \sum_{Q \in |\PP^1_{\FF_q}|} \ \sum_{i=1}^d \max\big(0, -\ord_Q(\alpha_i)\big).
  \]
  The divisor of a function has total degree~0, so $\sum_Q \ord_Q(a_n) =
  0$. Dividing by $d = \deg_x(f)$ gives the result.
\end{proof}

At times, it is more convenient to have a formulation of the height of $\alpha$
that requires less up-front knowledge of its minimal polynomial:

\begin{corollary}
  \label{cor:field_embeddings}
  Let $\alpha \in \overline{\FF_q(T)}$ be separable over $\FF_q(T)$. For any
  separable extension $K / \FF_q(T)$ containing $\alpha$, we have
\[
h(\alpha) = \frac{1}{[K:\FF_q(T)]} \sum_{Q \in |\PP^1_{\FF_q}|} \ \sum_{\sigma : K \hookrightarrow \overline{\FF_q(T)}} \max\big(0, -\ord_Q\left(\sigma(\alpha)\right)\big),
\]
where the inner sum is over the field embeddings of $K$ into
$\overline{\FF_q(T)}$ that fix $\FF_q(T)$ pointwise.
\end{corollary}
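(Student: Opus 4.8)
The plan is to reduce the corollary to the preceding Proposition by tracking how field embeddings factor through the intermediate field $F = \FF_q(T)(\alpha)$. First I would note that the Proposition already gives the desired formula in the special case $K = F$. Indeed, since $\alpha$ is separable, its minimal polynomial $f$ has exactly $d = \deg_x(f) = [F:\FF_q(T)]$ distinct roots $\alpha_1,\dots,\alpha_d$, all lying in $\overline{\FF_q(T)}$, and the map sending an embedding $\tau : F \hookrightarrow \overline{\FF_q(T)}$ fixing $\FF_q(T)$ to the element $\tau(\alpha)$ is a bijection onto $\{\alpha_1,\dots,\alpha_d\}$ (an $\FF_q(T)$-algebra map out of $F \cong \FF_q(T)[x]/(f)$ is precisely a choice of root of $f$). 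Hence for each place $Q$ we have $\sum_{\tau}\max\big(0,-\ord_Q(\tau(\alpha))\big) = \sum_{i=1}^d \max\big(0,-\ord_Q(\alpha_i)\big)$, and the Proposition's identity is exactly the $K = F$ instance of the corollary.

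Next I would promote this to an arbitrary separable extension $K \supseteq F$. Since $K/\FF_q(T)$ is separable, so is $K/F$, and therefore every embedding $\tau : F \hookrightarrow \overline{\FF_q(T)}$ over $\FF_q(T)$ extends to exactly $[K:F]$ embeddings $\sigma : K \hookrightarrow \overline{\FF_q(T)}$; conversely every embedding of $K$ over $\FF_q(T)$ restricts to such a $\tau$. Because $\alpha \in F$, each extension satisfies $\sigma(\alpha) = \tau(\alpha)$, so grouping the embeddings of $K$ according to their restriction to $F$ yields, for every $Q$,
\[
\sum_{\sigma : K \hookrightarrow \overline{\FF_q(T)}} \max\big(0,-\ord_Q(\sigma(\alpha))\big) = [K:F] \sum_{\tau : F \hookrightarrow \overline{\FF_q(T)}} \max\big(0,-\ord_Q(\tau(\alpha))\big).
\]
Summing over $Q$ and dividing by $[K:\FF_q(T)] = [K:F]\,[F:\FF_q(T)] = [K:F]\,d$ cancels the factor $[K:F]$ and recovers the $K=F$ formula, hence the value $h(\alpha)$.

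The only points requiring care — and the nearest thing to an obstacle — are bookkeeping issues rather than genuine difficulties. The valuation $\ord_Q$ is merely a fixed (but arbitrary) extension of the place $Q$ to $\overline{\FF_q(T)}$, so individual values $\ord_Q(\tau(\alpha))$ can depend on that choice; what one must observe is that the relevant sum $\sum_i \max\big(0,-\ord_Q(\alpha_i)\big)$ does not, being equal to $\ord_Q$ of the leading coefficient of the minimal polynomial by the lemma — this is already built into the Proposition, and the grouping above respects it. One should also record that all but finitely many terms vanish, which is immediate: there are finitely many embeddings $\sigma$, and for each one only finitely many $Q$ satisfy $\ord_Q(\sigma(\alpha)) < 0$.
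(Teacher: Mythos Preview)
Your proposal is correct and complete; it is precisely the standard reduction one has in mind for this kind of statement. The paper itself omits the proof entirely, remarking only that it is similar to the number field case, so there is no detailed argument to compare against --- but your approach (establish the $K=F=\FF_q(T)(\alpha)$ case by identifying embeddings with roots, then pass to general separable $K\supseteq F$ by grouping the $[K:F]$ extensions of each $\tau$) is exactly that standard argument.
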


The proof is similar to the number field case, so we omit it.

\begin{lemma}
  \label{lem:ht_facts}
  Let $\alpha,\beta$ be algebraic over $\FF_q(T)$ with $\alpha \ne 0$, and let
  $n$ be an integer. Then
  \begin{itemize}
    \item $h(\alpha^n) = |n| \ h(\alpha)$;
    \item $h(\alpha + \beta) \leq h(\alpha) + h(\beta)$; and
    \item $h(\alpha\beta) \leq h(\alpha) + h(\beta)$.
  \end{itemize}
\end{lemma}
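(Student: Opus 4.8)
The plan is to derive all three statements from the local expression for the height in Corollary~\ref{cor:field_embeddings}, after two preliminary reductions. First I would dispose of negative exponents and of inseparability, so that the main work takes place over a single separable extension. If $\alpha \neq 0$ has minimal polynomial $f = a_d x^d + \cdots + a_0 \in \FF_q[T][x]$ (with $a_0 \neq 0$, since otherwise $f = x$ and $\alpha = 0$), then reversing the coefficients and rescaling by a constant to restore the monic-leading-coefficient normalization produces the minimal polynomial of $\alpha^{-1}$; it has the same $x$-degree and the same $T$-degree as $f$, so $h(\alpha^{-1}) = h(\alpha)$ straight from the definition, with no separability hypothesis. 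Hence it is enough to prove $h(\alpha^n) = n\,h(\alpha)$ for $n \geq 0$ (the case $n=0$ being trivial). Next, since some $p^e$-th power of any algebraic element is separable over $\FF_q(T)$, since $h(\gamma^p) = p\,h(\gamma)$ for every nonzero $\gamma$ (a short minimal-polynomial computation: when $\gamma$ is inseparable its minimal polynomial has the form $g(T, x^p)$ and $g$ is, after normalization, the minimal polynomial of $\gamma^p$; when $\gamma$ is separable then $\FF_q(T)(\gamma^p) = \FF_q(T)(\gamma)$ and the conjugates of $\gamma^p$ are the $p$-th powers of those of $\gamma$), and since the characteristic-$p$ identities $(\alpha \pm \beta)^{p^e} = \alpha^{p^e} \pm \beta^{p^e}$ and $(\alpha\beta)^{p^e} = \alpha^{p^e}\beta^{p^e}$ hold, all three assertions reduce to the case where $\alpha$ and $\beta$ are both separable over $\FF_q(T)$.

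In that case I would fix a finite separable extension $K/\FF_q(T)$ containing $\alpha$ and $\beta$, say $K = \FF_q(T)(\alpha,\beta)$, so that $\alpha + \beta$, $\alpha\beta$, and every power $\alpha^n$ also lie in $K$, and apply Corollary~\ref{cor:field_embeddings} to each of these elements with this common $K$. Writing $\lambda_Q(t) = \max\big(0, -\ord_Q(t)\big)$ for a fixed extension of $\ord_Q$ to $\overline{\FF_q(T)}$, the corollary reads $h(\gamma) = \frac{1}{[K:\FF_q(T)]} \sum_Q \sum_\sigma \lambda_Q(\sigma\gamma)$ for $\gamma \in K$, the inner sum over embeddings $\sigma \colon K \hookrightarrow \overline{\FF_q(T)}$ fixing $\FF_q(T)$. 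For $n \geq 0$ one has $\lambda_Q\big(\sigma(\alpha)^n\big) = \max\big(0, -n\,\ord_Q(\sigma\alpha)\big) = n\,\lambda_Q(\sigma\alpha)$; summing over $Q$ and $\sigma$ gives $h(\alpha^n) = n\,h(\alpha)$, which together with the first paragraph yields $h(\alpha^n) = |n|\,h(\alpha)$ for all integers $n$.

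The two subadditivity bounds I would obtain term by term from the same formula, using that each $\sigma$ is a ring homomorphism and each $\ord_Q$ is a valuation. For products, $\ord_Q(\sigma\alpha \cdot \sigma\beta) = \ord_Q(\sigma\alpha) + \ord_Q(\sigma\beta)$ gives $-\ord_Q(\sigma\alpha\,\sigma\beta) \leq \lambda_Q(\sigma\alpha) + \lambda_Q(\sigma\beta)$, and since the right-hand side is also nonnegative this forces $\lambda_Q(\sigma\alpha\,\sigma\beta) \leq \lambda_Q(\sigma\alpha) + \lambda_Q(\sigma\beta)$. For sums, the ultrametric inequality $\ord_Q(\sigma\alpha + \sigma\beta) \geq \min\big(\ord_Q(\sigma\alpha), \ord_Q(\sigma\beta)\big)$ gives $\lambda_Q(\sigma\alpha + \sigma\beta) \leq \max\big(\lambda_Q(\sigma\alpha), \lambda_Q(\sigma\beta)\big) \leq \lambda_Q(\sigma\alpha) + \lambda_Q(\sigma\beta)$. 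Summing either inequality over all $Q$ and all $\sigma$, dividing by $[K:\FF_q(T)]$, and recognizing that the outer sums of $\lambda_Q(\sigma(\alpha\beta))$ and $\lambda_Q(\sigma(\alpha+\beta))$ compute $h(\alpha\beta)$ and $h(\alpha+\beta)$, yields $h(\alpha\beta) \leq h(\alpha) + h(\beta)$ and $h(\alpha+\beta) \leq h(\alpha) + h(\beta)$.

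I do not expect a genuinely hard step here: this is the standard height-machine argument carried out place by place. The one point that calls for care is the reduction to the separable case, in particular the identity $h(\gamma^p) = p\,h(\gamma)$ for possibly inseparable $\gamma$, since Corollary~\ref{cor:field_embeddings} is stated only for separable elements and cannot be invoked directly there; for that step one falls back on the definition of the height and on the Proposition preceding Corollary~\ref{cor:field_embeddings}, which expresses $h$ through the roots of the minimal polynomial (counted with multiplicity).
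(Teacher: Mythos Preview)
Your proposal is correct and is essentially the argument the paper has in mind: the paper's own ``proof'' is simply a pointer to the standard number-field computation in Bombieri--Gubler, with the remark that the Archimedean $\log 2$ disappears. Your write-up carries that computation out explicitly via Corollary~\ref{cor:field_embeddings}, which is exactly the intended route.

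One point worth highlighting is that you do more than the paper does: you notice that Corollary~\ref{cor:field_embeddings} is stated only for separable $\alpha$, and you supply a reduction to that case via $h(\gamma^p) = p\,h(\gamma)$ and the Frobenius identities $(\alpha\pm\beta)^{p^e} = \alpha^{p^e}\pm\beta^{p^e}$, $(\alpha\beta)^{p^e} = \alpha^{p^e}\beta^{p^e}$. This is a function-field wrinkle absent from the number-field reference, and the paper glosses over it entirely. Your handling of it is fine; the minimal-polynomial argument for $h(\gamma^p) = p\,h(\gamma)$ in both the separable and inseparable subcases is correct as stated.
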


\begin{proof}
The proofs of all three statements proceed exactly as in the case of heights on
algebraic number fields; see, for example,
\cite[\S1.5]{Bombieri-Gubler_2006}. The second inequality for number fields has
a $\log 2$ term in it that arises from the Archimedean places; evidently, we can
dispense with that in the function field case.
\end{proof}

\begin{remark}
  In light of the fact that $h(\alpha) = h(\alpha^{-1})$ for $\alpha \ne 0$, we
  can drop the sign in our formulation of the height:
  \[
  h(\alpha) = \frac{1}{d} \sum_{Q \in |\PP^1_{\FF_q}|} \ \sum_{i=1}^d \max\big(0, \ord_Q(\alpha_i)\big),
  \]
  and similarly for the formulation involving field embeddings from
  Corollary~\ref{cor:field_embeddings}. 
\end{remark}

\begin{lemma}
  \label{lem:invariant}
  The height is invariant under the action of $\PGL_2(\bar \FF_q)$. That
  is, if $\alpha$ is algebraic over $\FF_q(T)$ and $\gamma \in \PGL_2(\bar\FF_q)$,
  then $h(\gamma(\alpha)) = h(\alpha)$.
\end{lemma}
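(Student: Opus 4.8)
The plan is to reduce the statement to the elementary height inequalities already recorded in Lemma~\ref{lem:ht_facts}, exploiting the fact that $\PGL_2(\bar\FF_q)$ is generated by very simple transformations. First I would check that the assertion makes sense: since $\bar\FF_q$ is algebraic over $\FF_q$, the field $\bar\FF_q(T)$ is algebraic over $\FF_q(T)$, so if $\alpha$ is algebraic over $\FF_q(T)$ and $\gamma\in\PGL_2(\bar\FF_q)$, then $\gamma(\alpha)$ lies in $\bar\FF_q(T)(\alpha)$, which is again algebraic over $\FF_q(T)$, and hence $h(\gamma(\alpha))$ is defined. If $\alpha$ is constant (including the degenerate cases $\alpha=0$ or $\alpha=\infty$), then so is $\gamma(\alpha)$ and both heights vanish, so I may assume $\alpha$ is nonconstant; in particular $\alpha\neq 0,\infty$, and $\alpha$ is not a pole of $\gamma$.

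Next I would recall the standard fact that $\PGL_2$ of any field is generated by the translations $x\mapsto x+b$, the homotheties $x\mapsto ax$ (with $a$ a unit), and the inversion $x\mapsto 1/x$. Writing a given $\gamma$ as a composition $\gamma_1\circ\cdots\circ\gamma_k$ of such maps and peeling off the generators one at a time — each intermediate image $\gamma_i\circ\cdots\circ\gamma_k(\alpha)$ being nonconstant and algebraic over $\FF_q(T)$ by the first paragraph — it suffices to prove $h(\gamma(\alpha))=h(\alpha)$ when $\gamma$ is one of the three generators. For the inversion $x\mapsto 1/x$ this is exactly the $n=-1$ case of the identity $h(\alpha^n)=|n|\,h(\alpha)$ in Lemma~\ref{lem:ht_facts}. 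For translations and homotheties I would use that every $c\in\bar\FF_q$ is constant and so has $h(c)=0$, and then apply the subadditivity statements $h(\alpha+\beta)\le h(\alpha)+h(\beta)$ and $h(\alpha\beta)\le h(\alpha)+h(\beta)$ of Lemma~\ref{lem:ht_facts} in both directions: $h(\alpha+b)\le h(\alpha)+h(b)=h(\alpha)$ while $h(\alpha)=h((\alpha+b)+(-b))\le h(\alpha+b)+h(-b)=h(\alpha+b)$, and likewise $h(a\alpha)\le h(a)+h(\alpha)=h(\alpha)$ while $h(\alpha)=h(a^{-1}(a\alpha))\le h(a^{-1})+h(a\alpha)=h(a\alpha)$. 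This gives $h(\gamma(\alpha))=h(\alpha)$ for each generator, hence for all of $\PGL_2(\bar\FF_q)$.

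There is no genuine analytic obstacle here; the substance is entirely carried by Lemma~\ref{lem:ht_facts} together with the vanishing of the height on $\bar\FF_q$. The only points that need a word of care are bookkeeping ones: verifying at the outset that $\gamma(\alpha)$ remains algebraic over $\FF_q(T)$ so that every height appearing is defined, disposing of the degenerate arguments $0$ and $\infty$ (and, implicitly, checking that the peeling-off process never produces a pole of a subsequent generator, which is guaranteed by nonconstancy), and invoking — or giving the short standard argument for — the generation of $\PGL_2(\bar\FF_q)$ by translations, homotheties, and inversion.
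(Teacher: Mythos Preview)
Your proposal is correct and follows essentially the same approach as the paper: reduce to the generators of $\PGL_2(\bar\FF_q)$ (inversion, translations, homotheties) and verify each case using the identities and subadditivity inequalities of Lemma~\ref{lem:ht_facts} together with $h(c)=0$ for $c\in\bar\FF_q$. Your write-up is slightly more careful about the bookkeeping (algebraicity of $\gamma(\alpha)$, the degenerate cases), but the substance is identical.
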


\begin{proof}
The group $\PGL_2(\bar \FF_q)$ is generated by the elements
\begin{eqnarray*}
  \gamma_0(z) &=& 1/z, \\
  \gamma_1(z) &=& z + t, \text{ where } t \in \bar \FF_q \\
  \gamma_2(z) &=& uz, \text{ where } u \in \bar \FF_q^\times.
\end{eqnarray*}
Consequently, it suffices to verify that 
$\alpha$, $1/\alpha$, $\alpha+t$, and $u\alpha$ have the same height. This is
clear if $\alpha = \infty$, so we assume $\alpha$ is finite for the remainder of
the proof.

Clearly $h(1/\alpha) = h(\alpha)$ by Lemma~\ref{lem:ht_facts}. Again using
Lemma~\ref{lem:ht_facts}, we have
\[
h(\alpha) = h(\alpha + t - t) \leq h(\alpha+t) + h(-t) 
= h(\alpha+t) \leq h(\alpha) + h(t) =
h(\alpha).
\]
Thus every inequality in the chain is an equality, and $h(\alpha+t) =
h(\alpha)$.
Similarly,
\[
h(\alpha) = h\big(u\alpha(1/u)\big) \leq h(u\alpha) + h(1/u) = h(u\alpha) \leq
h(u) + h(\alpha) = h(\alpha)
\]
proves that $h(u\alpha) = h(\alpha)$.
\end{proof}

%%%%%%%%%%%%%%%%%%%%%%%%%%%%%%%%%%%%%%%%%%%%%%%%%%%%%%%%%%%%%%%%%%%%%%%%%%%%%%%%

\subsection{Totally $T$-adic functions}
\label{sec:totally_T_adic}

Recall that a function $\alpha \in \overline{\FF_q(T)}$ is totally $T$-adic if
its minimal polynomial splits completely over $\FF_q\Ls{T}$.

\begin{proposition}
  Let $L \subset \FF_q\Ls{T}$ be a subfield of the Laurent series field over
  $\FF_q$. Suppose that $L$ is algebraic over $\FF_q(T)$. Then $L$ is a
  separable extension of $\FF_q(T)$.
\end{proposition}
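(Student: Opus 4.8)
The plan is to show that if $L \subset \FF_q\Ls{T}$ is algebraic over $\FF_q(T)$, then no element of $L$ can be inseparable over $\FF_q(T)$. Since $\FF_q(T)$ has characteristic $p$ (where $q$ is a power of $p$), the obstruction to separability is the presence of $p$-th roots: an element $\alpha \in L$ fails to be separable over $\FF_q(T)$ exactly when its minimal polynomial lies in $\FF_q(T)[x^p]$, and in that case $\alpha$ is a $p$-th root of some element of $\FF_q(T)$ not already a $p$-th power there. So it suffices to prove the following: if $c \in \FF_q(T)$ and $c$ has a $p$-th root in $\FF_q\Ls{T}$, then $c$ already has a $p$-th root in $\FF_q(T)$. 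Equivalently, $\FF_q(T)$ is $p$-closed inside $\FF_q\Ls{T}$, i.e. $\FF_q(T) \cap \FF_q\Ls{T}^p = \FF_q(T)^p$.

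First I would identify the $p$-th power subfield explicitly. Since $\FF_q$ is perfect, $\FF_q^p = \FF_q$, and the Frobenius sends $\sum a_i T^i$ to $\sum a_i^p T^{pi} = \sum a_i T^{pi}$; hence $\FF_q\Ps{T}^p = \FF_q\Ps{T^p}$ and $\FF_q\Ls{T}^p = \FF_q\Ls{T^p}$ — precisely the Laurent series whose nonzero terms occur in exponents divisible by $p$. Now suppose $c = a(T)/b(T) \in \FF_q(T)$ lies in $\FF_q\Ls{T^p}$; I want to conclude $c \in \FF_q(T^p)$, which gives $c \in \FF_q(T)^p$ because $\FF_q(T^p) = \FF_q(T)^p$ (again using perfectness of $\FF_q$). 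Write $c$ as a Laurent series and observe the hypothesis says every exponent appearing is a multiple of $p$. The clean way to turn this into a statement about the rational function is to use the derivation $d/dT$: the series $c$ has all exponents divisible by $p$ if and only if its formal derivative $c' = dc/dT$ vanishes in $\FF_q\Ls{T}$ (since $T^{pi}$ differentiates to $pi\,T^{pi-1} = 0$, while a term $T^m$ with $p \nmid m$ differentiates to a nonzero term). But $c' \in \FF_q(T)$ is computed by the quotient rule, and $c' = 0$ in $\FF_q\Ls{T}$ forces $c' = 0$ in $\FF_q(T)$ since $\FF_q(T) \hookrightarrow \FF_q\Ls{T}$. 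A rational function over a field of characteristic $p$ with zero derivative lies in $\FF_q(T^p)$ — this is a standard fact, provable by writing $c$ in lowest terms and matching the numerator of $c'$ against $p \mid$ the exponents — hence $c \in \FF_q(T^p) = \FF_q(T)^p$, as desired.

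Putting the pieces together: take any $\alpha \in L$. If $\alpha$ were inseparable over $\FF_q(T)$, its minimal polynomial would have the form $g(x^{p^e})$ for some $e \geq 1$ with $g$ separable, and then $\beta = \alpha^{p^{e-1}} \in L \subset \FF_q\Ls{T}$ would be a $p$-th root of an element $c \in \FF_q(T)$ that is not a $p$-th power in $\FF_q(T)$; but the previous paragraph shows $c \in \FF_q(T)^p$, a contradiction. Therefore every element of $L$ is separable over $\FF_q(T)$, i.e. $L/\FF_q(T)$ is a separable extension. The main obstacle — really the only nonroutine point — is the implication "$c \in \FF_q(T)$ with $dc/dT = 0$ implies $c \in \FF_q(T)^p$"; everything else is bookkeeping with Frobenius on Laurent series and the perfectness of $\FF_q$. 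One should double-check the edge case where $\alpha$ itself is constant (an element of $\overline{\FF_q} \cap \FF_q\Ls{T} = \FF_q$), where separability is automatic since $\FF_q/\FF_q$ is trivial, so the argument above need only be run for nonconstant $\alpha$.
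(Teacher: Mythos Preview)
Your key lemma $\FF_q(T)\cap\FF_q\Ls{T}^p=\FF_q(T)^p$ and its proof via the derivation $d/dT$ are correct. The gap is in the reduction to that lemma. You claim that if $\alpha\in L$ is inseparable with minimal polynomial $g(x^{p^e})$, $g$ separable, then $\beta=\alpha^{p^{e-1}}$ satisfies $\beta^p=c$ with $c\in\FF_q(T)$. But $c=\alpha^{p^e}$ is a root of the separable irreducible polynomial $g$, so $c\in\FF_q(T)$ only when $\deg g=1$, i.e.\ only when $\alpha$ is \emph{purely} inseparable over $\FF_q(T)$. In general $c$ lies only in the separable subextension $M=\FF_q(T)(\alpha^{p^e})$, which may strictly contain $\FF_q(T)$, and your lemma as stated says nothing about such $c$.

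The repair is to run the same derivation argument with $M$ in place of $\FF_q(T)$. Since $M/\FF_q(T)$ is finite separable, $d/dT$ extends uniquely to a nonzero derivation on $M$; since $M$ is a one-variable function field over the perfect field $\FF_q$ one has $[M:M^p]=p$, so the kernel of $d/dT$ on $M$ is exactly $M^p$. Now $c=\beta^p$ with $\beta\in\FF_q\Ls{T}$ gives $c\in\FF_q\Ls{T}^p$, hence $dc/dT=0$ in $\FF_q\Ls{T}$ and therefore in $M$, hence $c\in M^p$; then $\beta\in M$ and $\alpha$ is separable after all. This is exactly the paper's outline --- it too works relative to $L^\sep$ rather than $\FF_q(T)$ --- except that the paper establishes the analogue of your key lemma by explicit bookkeeping with Laurent-series exponents rather than via a derivation. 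Once the base field is corrected from $\FF_q(T)$ to $M$, your derivation argument is arguably the cleaner of the two.
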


\begin{proof}
Let $L^\sep / \FF_q(T)$ be the maximal separable subextension of $L / \FF_q(T)$;
then $L / L^\sep$ is purely inseparable. We claim that $L / L^\sep$ is trivial. 

Suppose that $L / L^\sep$ is nontrivial, and let $\beta \in L \smallsetminus
L^\sep$. Then there exists $b \in L^\sep$ and $n \geq 1$ such that $\beta^{p^n}
= b$, where $p$ be the characteristic of $\FF_q$. Note that $b$ is not a
$p^n$-th power in $L^\sep$, else $\beta \in L^\sep$. Since $b \in \FF_q\Ls{T}$,
we may write
\[
b = \sum_{j \in \ZZ} c_j T^j,
\]
where $c_j \in \FF_q$ and $c_j = 0$ for all $j$ sufficiently negative. Write
$c'_j$ for the unique element of $\FF_q$ such that $(c'_j)^{p^n} = c_j$. Then
the unique root of $x^{p^n} - b$ inside $\overline{\FF_q\Ls{T}}$ is given by
\[
\beta = \sum_{j \in \ZZ} c'_j (T^{1/p^n})^j.
\]

We now argue that $\beta$ is not an element of $\FF_q\Ls{T}$, contrary to the fact
that $L \subset \FF_q\Ls{T}$. For suppose otherwise, and consider the set of
indices $S = \{j : c_j \ne 0\}$. If every element of $S$ is a multiple of $p^n$,
then $b$ is a $p^n$-th power, a contradiction. So there is a minimum index $m
\in S$ that is not a $p^n$-th power. Write
\[
\beta_0 = \sum_{\substack{j \in S\\ p^n \mid j}} c'_j (T^{1/p^n})^j = \sum_{p^n j \in S} c'_{p^nj} T^j \in \FF_q\Ls{T}.
\]
Now we see that
\[
\beta - \beta_0 = c'_m (T^{1/p^n})^m + \text{ higher order terms}.
\]
If $\beta \in \FF_q\Ls{T}$, then so is $\beta - \beta_0$, and hence its $T$-adic
valuation is an integer. But we also see that $\ord_0(\beta - \beta_0) =
\frac{m}{p^n} \not\in \ZZ$. This contradiction completes the proof.
\end{proof}

\begin{corollary}
  \label{cor:separable}
  Let $\alpha \in \overline{\FF_q(T)}$ be a totally $T$-adic function. Then
  $\alpha$ is separable over $\FF_q(T)$.
\end{corollary}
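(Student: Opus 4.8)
The plan is to read this off immediately from the preceding Proposition, using only the definition of ``totally $T$-adic'' and the fact that separability of $\alpha$ over $\FF_q(T)$ is a property of its minimal polynomial $f$, hence is unchanged if $\alpha$ is replaced by any conjugate (any other root of $f$). So the first move is to trade $\alpha$ for a conveniently placed conjugate.

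Concretely: since $\alpha$ is algebraic over $\FF_q(T)$ we have $\deg f \geq 1$, and since $\alpha \in \cT_q$ the polynomial $f$ splits completely over $\FF_q\Ls{T}$; pick a root $\beta \in \FF_q\Ls{T}$ of $f$. Then $L := \FF_q(T)(\beta)$ is a subfield of the Laurent series field $\FF_q\Ls{T}$ and is finite, hence algebraic, over $\FF_q(T)$. The preceding Proposition applies verbatim to $L$ and yields that $L/\FF_q(T)$ is separable; in particular $\beta$ is a separable element, so $f$ is a separable polynomial. As $\alpha$ is also a root of $f$, it is separable over $\FF_q(T)$. (Alternatively one can fix an embedding $\overline{\FF_q(T)} \hookrightarrow \overline{\FF_q\Ls{T}}$ extending $\FF_q(T)\hookrightarrow \FF_q\Ls{T}$; under it $\alpha$ itself lands in $\FF_q\Ls{T}$ because $f$ splits there, so the Proposition applies directly to $\FF_q(T)(\alpha)$.)

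There is no genuine obstacle here — the substantive argument is entirely contained in the Proposition. The only points requiring a modicum of care are purely bookkeeping: confirming that ``$f$ splits completely over $\FF_q\Ls{T}$'' really does furnish a root of $f$ lying inside $\FF_q\Ls{T}$ (immediate once $f$ is known to be nonconstant), and noting that passing between $\alpha$ and a conjugate $\beta$ is harmless when the goal is to certify that $f$ — and hence each of its roots — is separable.
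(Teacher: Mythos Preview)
Your proof is correct and follows essentially the same approach as the paper: both apply the preceding Proposition to a subfield of $\FF_q\Ls{T}$ generated by roots of $f$ (the paper uses the full splitting field, you use the field generated by a single root) to conclude that $f$ is separable, hence $\alpha$ is. The only difference is cosmetic---you are a bit more explicit about passing to a conjugate lying in $\FF_q\Ls{T}$, whereas the paper leaves this implicit.
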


\begin{proof}
Let $f$ be the minimal polynomial for $\alpha$. Since $\alpha$ is totally
$T$-adic, there is a splitting field $L \subset \FF_q\Ls{T}$ for $f$. We may
assume that $L$ is of finite dimension over $\FF_q(T)$. By the preceding
proposition, we see that $L$ is separable over $\FF_q(T)$, and hence so is
$\alpha$.
\end{proof}

\begin{proposition}
  \label{prop:Tq_is_a_field}
  The set $\cT_q$ of totally $T$-adic functions is a
  field.
\end{proposition}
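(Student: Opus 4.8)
The plan is to show that $\cT_q$ is closed under the field operations by exhibiting, for any $\alpha,\beta\in\cT_q$ with $\beta\neq0$, a single finite Galois extension $M/\FF_q(T)$ with $\FF_q(T)\subseteq M\subseteq\FF_q\Ls{T}$ that contains both $\alpha$ and $\beta$, and then checking that \emph{every} element of such an $M$ is totally $T$-adic. Since $\FF_q(T)\subseteq\cT_q$ trivially (the minimal polynomial of a rational function is linear and so splits over $\FF_q\Ls{T}$), this will prove that $\cT_q$ is a subfield of $\overline{\FF_q(T)}$. Throughout we use Corollary~\ref{cor:separable}, which guarantees that every element of $\cT_q$ is separable over $\FF_q(T)$, so that the relevant normal extensions are in fact Galois; this is the only place where characteristic $p$ needs a moment's care.

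First I would fix, once and for all, an identification of $\overline{\FF_q(T)}$ with the algebraic closure of $\FF_q(T)$ inside an algebraic closure of $\FF_q\Ls{T}$, so that the statement ``a conjugate of $\alpha$ lies in $\FF_q\Ls{T}$'' is unambiguous and agrees with ``the minimal polynomial of $\alpha$ splits completely over $\FF_q\Ls{T}$''. Given $\alpha,\beta\in\cT_q$, let $M$ be the subfield of $\FF_q\Ls{T}$ generated over $\FF_q(T)$ by all of the roots of the minimal polynomials of $\alpha$ and of $\beta$; these roots lie in $\FF_q\Ls{T}$ precisely because $\alpha$ and $\beta$ are totally $T$-adic, so $M$ is well defined. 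By construction $M$ is a splitting field over $\FF_q(T)$ of the product of the two minimal polynomials, hence normal over $\FF_q(T)$; it is separable by Corollary~\ref{cor:separable} and finite over $\FF_q(T)$; therefore $M/\FF_q(T)$ is finite Galois, with $\FF_q(T)\subseteq M\subseteq\FF_q\Ls{T}$ and $\alpha,\beta\in M$.

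Finally, I would observe that $M\subseteq\cT_q$: for any $\gamma\in M$, normality of $M/\FF_q(T)$ implies that the minimal polynomial of $\gamma$ splits completely over $M$, hence over $\FF_q\Ls{T}$, so $\gamma\in\cT_q$. Since $\alpha+\beta$, $\alpha\beta$, and $\alpha\beta^{-1}$ all lie in $M$, they lie in $\cT_q$, which — together with $\FF_q(T)\subseteq\cT_q$ — shows $\cT_q$ is a field. I do not anticipate a genuine obstacle: this is the standard argument that the totally real (resp.\ totally $p$-adic) algebraic numbers form a field, transported to the function-field setting. The only points requiring attention are the two already flagged, namely invoking separability so that ``normal'' upgrades to ``Galois'', and pinning down the ambient algebraic closure so that the two characterizations of ``totally $T$-adic'' coincide. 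One could equivalently package the proof by first showing that $\cT_q$ equals the union of all finite Galois subextensions $M$ of $\overline{\FF_q(T)}/\FF_q(T)$ contained in $\FF_q\Ls{T}$, and then noting that this is a directed union of fields.
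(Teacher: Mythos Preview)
Your argument is correct. Both your proof and the paper's rest on the same underlying fact---that all conjugates of $\alpha$ and $\beta$ already live in $\FF_q\Ls{T}$---but you package it differently. The paper argues operation by operation: it treats inverses via the reversed polynomial $f^*(x)=x^{\deg f}f(1/x)$, and for sums it writes down the explicit polynomial $F(x)=\prod_{i,j}(x-\alpha_i-\beta_j)$, uses separability so that the absolute Galois group permutes the $\alpha_i+\beta_j$ and hence $F\in\FF_q(T)[x]$, and then observes that the minimal polynomial of $\alpha+\beta$ divides $F$ and so splits over $\FF_q\Ls{T}$; products are handled analogously. You instead pass once to the splitting field $M\subseteq\FF_q\Ls{T}$ of the product of the two minimal polynomials and invoke normality to get all field operations at a stroke. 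Your route is a little more structural and, as you note, immediately yields the description of $\cT_q$ as the directed union of finite Galois subextensions of $\overline{\FF_q(T)}/\FF_q(T)$ contained in $\FF_q\Ls{T}$; the paper's route is the classical resultant-style argument and makes no appeal to normal extensions. One small remark: your invocation of Corollary~\ref{cor:separable} to upgrade ``normal'' to ``Galois'' is harmless but not strictly needed, since normality alone already forces the minimal polynomial of any $\gamma\in M$ to split over $M$.
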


\begin{proof}
  Let $\alpha \in \cT_q$, and let $f$ be its minimal polynomial. If $\alpha \ne
  0$, then $f^*(x) = x^{\deg(f)}f(1/x)$ is the minimal polynomial for
  $\alpha^{-1}$. Evidently, $f^*$ splits completely over $\FF_q\Ls{T}$ because
  $f$ does, so $\alpha^{-1} \in \cT_q$.

  Suppose that $\alpha, \beta \in \cT_q$ have minimal polynomials $f, g$,
  respectively. Write $\alpha_1, \ldots, \alpha_m \in \FF_q\Ls{T}$ for the roots
  of $f$, and write $\beta_1, \ldots, \beta_n \in \FF_q\Ls{T}$ for the roots of
  $g$. Consider the polynomial
  \[
    F(x) = \prod_{i=1}^m \prod_{j=1}^n (x - \alpha_i - \beta_j).
    \]
  The coefficients of $F$ are symmetric in the sums $\alpha_i + \beta_j$. Since
  totally $T$-adic elements are separable, the absolute Galois group of
  $\FF_q(T)$ permutes the set of such sums. Hence, $F$ has coefficients in
  $\FF_q(T)$. If we write $P$ for the minimal polynomial of $\alpha + \beta$, it
  follows that $P$ divides $F$. By construction, $F$ splits completely over
  $\FF_q\Ls{T}$, and thus, so does $P$. We conclude that $\alpha + \beta \in
  \cT_q$. A similar argument applies to the product $\alpha \beta$.
\end{proof}

\begin{proposition}
  The set $\PP^1(\cT_q) = \cT_q \cup \infty$ is preserved by the action of
  $\PGL_2(\FF_q)$
\end{proposition}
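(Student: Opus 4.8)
The plan is to reduce to the generators of $\PGL_2(\FF_q)$, just as in the proof of Lemma~\ref{lem:invariant}. First I would recall that $\PGL_2(\FF_q)$ is generated by the three families of maps $\gamma_0(z) = 1/z$, $\gamma_1(z) = z + t$ with $t \in \FF_q$, and $\gamma_2(z) = uz$ with $u \in \FF_q^\times$. Since a subset of $\PP^1(\overline{\FF_q(T)})$ stable under each of a set of generators is stable under the group they generate, it suffices to verify that $\PP^1(\cT_q)$ is preserved by $\gamma_0$, $\gamma_1$, and $\gamma_2$.

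Next I would observe that $\FF_q \subset \cT_q$: any $c \in \FF_q$ has minimal polynomial $x - c$ over $\FF_q(T)$, which splits (trivially) over $\FF_q\Ls{T}$. Combined with Proposition~\ref{prop:Tq_is_a_field}, asserting that $\cT_q$ is a field, this gives at once that for finite $\alpha \in \cT_q$ we have $\alpha + t \in \cT_q$ for $t \in \FF_q$, and $u\alpha \in \cT_q$ for $u \in \FF_q^\times$, and $\alpha^{-1} \in \cT_q$ provided $\alpha \neq 0$.

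Then I would dispose of the boundary points $0$ and $\infty$. For $\gamma_1$ and $\gamma_2$ one has $\gamma_i(\infty) = \infty \in \PP^1(\cT_q)$, while $\gamma_i(\alpha) \in \cT_q$ for finite $\alpha \in \cT_q$ by the previous paragraph. For $\gamma_0$ one checks $\gamma_0(\infty) = 0 \in \cT_q$, $\gamma_0(0) = \infty \in \PP^1(\cT_q)$, and $\gamma_0(\alpha) = \alpha^{-1} \in \cT_q$ for $\alpha \in \cT_q \smallsetminus \{0\}$. Running over these cases shows each generator maps $\PP^1(\cT_q)$ into itself, which completes the argument.

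I do not expect any genuine obstacle here: the entire content is carried by the fact that $\cT_q$ is a field containing $\FF_q$, and the only point requiring any care is the behavior of $0$ and $\infty$ under the inversion $\gamma_0$. The one thing worth flagging explicitly is that the matrix entries of elements of $\PGL_2(\FF_q)$ lie in $\FF_q$, so the translations and scalings appearing above have parameters in $\FF_q \subset \cT_q$; this is exactly why we get stability under $\PGL_2(\FF_q)$ rather than the larger group $\PGL_2(\bar\FF_q)$, for which the statement would fail.
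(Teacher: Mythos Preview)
Your proposal is correct and takes essentially the same approach as the paper: both arguments rest entirely on the fact that $\cT_q$ is a field containing $\FF_q$. The paper compresses this into a single sentence (``immediate from the fact that $\FF_q$ is a subfield of the field $\cT_q$''), whereas you unpack it by passing to generators and handling $0$ and $\infty$ explicitly; but there is no substantive difference in content.
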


\begin{proof}
  This is immediate from the fact that $\FF_q$ is a subfield of the field $\cT_q$.
\end{proof}

%%%%%%%%%%%%%%%%%%%%%%%%%%%%%%%%%%%%%%%%%%%%%%%%%%%%%%%%%%%%%%%%%%%%%%%%%%%%%%%%
%%%%%%%%%%%%%%%%%%%%%%%%%%%%%%%%%%%%%%%%%%%%%%%%%%%%%%%%%%%%%%%%%%%%%%%%%%%%%%%%

\section{Minimum-height functions}
\label{sec:min_height}

We have two goals for this section. The first is to study the minimal
polynomials of totally $T$-adic functions of minimum height. In so doing, we
will give an analytic proof of our main height lower bound:

\begin{theorem}
  \label{thm:height_bound}
  Let $\alpha$ be a nonconstant totally $T$-adic function. Then $h(\alpha) \geq
  \frac{1}{q+1}$.
\end{theorem}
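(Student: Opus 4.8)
The idea is to exploit the formula for the height as a sum of local contributions (from Corollary~\ref{cor:field_embeddings} and the sign-free reformulation in the Remark), and to bound the contribution at the single place $Q = 0$ together with the complementary sum. Let $\alpha$ be nonconstant and totally $T$-adic, let $K = \FF_q(T)(\alpha)$, and set $d = [K:\FF_q(T)]$. Since $\alpha$ is totally $T$-adic, each embedding $\sigma : K \hookrightarrow \overline{\FF_q(T)}$ actually lands inside $\FF_q\Ls{T}$ after composing with a fixed $\FF_q(T)$-embedding of the splitting field; equivalently, the $d$ conjugates $\alpha_1, \ldots, \alpha_d$ all lie in $\FF_q\Ls{T}$, so each $\ord_0(\alpha_i) \in \ZZ$. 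Partition the conjugates into three sets according to $\ord_0(\alpha_i) < 0$, $\ord_0(\alpha_i) = 0$, and $\ord_0(\alpha_i) > 0$; call the sizes of these sets $d_-, d_0, d_+$, so $d_- + d_0 + d_+ = d$.

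The main point is to bound $d$ from above in terms of $d \cdot h(\alpha)$ by thinking of the conjugates as reduction data over $\FF_q$. Consider the reductions modulo $T$ (or modulo $1/T$, for the ones in the region $\ord_0 < 0$, after inverting). A conjugate with $\ord_0(\alpha_i) > 0$ reduces to $0 \in \FF_q$; a conjugate with $\ord_0(\alpha_i) = 0$ reduces to some nonzero element of the residue field, which is $\FF_q$ since these conjugates are in $\FF_q\Ps{T}^\times$; and a conjugate with $\ord_0(\alpha_i) < 0$ has $\alpha_i^{-1}$ reducing to $0$. The key counting inequality should come from bounding how many conjugates can share a given reduction: the number of conjugates reducing to a fixed residue $\bar c \in \PP^1(\FF_q)$ is controlled by the height, because each coincidence of reductions forces cancellation that shows up in $\ord_0$ of a difference of conjugates, and $\sum_{i,j}\ord_0(\alpha_i - \alpha_j)$-type quantities are bounded by (a constant times) $d \cdot h(\alpha)$ via the local height formula applied to $\alpha_i - \alpha_j$ and the ultrametric inequality. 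Averaging over the $q+1$ points of $\PP^1(\FF_q)$ and combining the three regional bounds should yield $d \leq (q+1)\, d\, h(\alpha)$, i.e. $h(\alpha) \geq 1/(q+1)$; the introduction's description (``the inequality is the average of three bounds, one per region'') confirms this is the intended shape.

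The main obstacle, and the step requiring the most care, is making precise the bound ``few conjugates can share a reduction, with the deficit paid for by the height.'' Concretely I expect to need: for the region $\ord_0 = 0$, the count of conjugates reducing to a fixed $\bar c \in \FF_q^\times$ is at most $1 + (\text{contribution to } d\,h(\alpha) \text{ from place } 0 \text{ via differences})$; and symmetric statements after applying $\gamma_0(z) = 1/z$ or $\gamma_1(z) = z - c$ from Lemma~\ref{lem:invariant}, which is exactly why $\PGL_2(\FF_q)$-invariance of the height is invoked in the introduction. Getting the bookkeeping right so that the three regional inequalities are genuinely symmetric and sum cleanly to $(q+1)$ — rather than to something weaker like $q+1$ plus error terms — is the delicate part; one likely has to phrase everything in terms of the divisor of $\alpha$ on $C_\alpha$ and the degrees of the maps $T$ and $x$, or equivalently track that $\sum_i \max(0, \ord_0(\alpha_i)) + \sum_i \max(0,\ord_\infty(\text{something}))$ plus the ``generic fiber'' count reconstructs $d$ exactly. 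Once the three bounds
\[
d_+ \leq d\, h(\alpha), \qquad d_0 \leq (q-1)\, d\, h(\alpha), \qquad d_- \leq d\, h(\alpha)
\]
(or the analogous sharp versions) are established, adding them gives the theorem immediately.
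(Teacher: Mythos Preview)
Your overall architecture is exactly the paper's: partition the conjugates by the sign of $\ord_0$, prove the three inequalities
\[
d_+ \le d\,h(\alpha), \qquad d_0 \le (q-1)\,d\,h(\alpha), \qquad d_- \le d\,h(\alpha),
\]
and add. So the endpoint is right. What differs is how you propose to reach the middle inequality, and there your plan drifts.

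For $d_+$ and $d_-$ the paper's argument is much simpler than what you sketch: since the conjugates lie in $\FF_q\Ls{T}$, their valuations are integers, so $\ord_0(\beta)>0$ forces $\ord_0(\beta)\ge 1$. Isolating the single term $Q=0$ in the sign-free height formula for $\alpha$ gives $d\,h(\alpha)\ge d_+$ directly; applying the same to $\alpha^{-1}$ (using $h(\alpha^{-1})=h(\alpha)$) gives $d\,h(\alpha)\ge d_-$. No differences $\alpha_i-\alpha_j$, no pairwise sums, no error terms.

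For $d_0$, your talk of ``differences of conjugates'' and a bound of the shape ``$1+{}$(contribution)'' is off track; there is no additive $1$, and nothing about pairs $(i,j)$ is needed. The paper handles all nonzero residues at once with a single substitution: $(q-1)h(\alpha)=h(\alpha^{q-1})=h(\alpha^{q-1}-1)$ by Lemma~\ref{lem:ht_facts} and Lemma~\ref{lem:invariant}, and every conjugate $\beta$ with $\ord_0(\beta)=0$ satisfies $\ord_0(\beta^{q-1}-1)\ge 1$; isolating $Q=0$ in the height of $\alpha^{q-1}-1$ then yields $(q-1)\,d\,h(\alpha)\ge d_0$. An equally valid alternative, closer to what you hint at with the shifts $z\mapsto z-c$, is to apply the $d_+$ bound to each $\alpha-c$ for $c\in\FF_q^\times$ (same height by Lemma~\ref{lem:invariant}), obtaining $d\,h(\alpha)\ge \#\{\beta:\beta\equiv c\pmod T\}$, and then sum over the $q-1$ values of $c$. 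Either route gives the clean inequality with no ``$1+$'' term; drop the pairwise-difference heuristic entirely.
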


Then we will turn to the geometry of the algebraic curves associated to totally
$T$-adic functions of minimum height. Our study of minimal polynomials will
provide insight into the singularities of plane models of these curves.

%%%%%%%%%%%%%%%%%%%%%%%%%%%%%%%%%%%%%%%%%%%%%%%%%%%%%%%%%%%%%%%%%%%%%%%%%%%%%%%%

\subsection{Minimal polynomials}
\label{sec:min_pols}

We begin with a pair of lemmas that help us understand the height in terms of
the locations of the conjugates of $\alpha$ inside $\FF_q\Ls{T}$.

\begin{lemma}
  \label{lem:positive-valuation}  
Let $\alpha \ne 0$ be totally $T$-adic of degree~$d$ over $\FF_q(T)$, and
suppose that $r$ of its Galois conjugates have positive $T$-adic valuation. Then
\[
h(\alpha) \geq \frac{r}{d}.
\]
\end{lemma}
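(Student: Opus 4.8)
The plan is to read the bound directly off the local decomposition of the height, keeping only the contribution of the place $T=0$. Recall from the Proposition in \S\ref{sec:ff_heights} (in the sign-free form recorded in the Remark that follows it) that if $\alpha_1,\dots,\alpha_d$ denote the conjugates of $\alpha$, then
\[
h(\alpha) \;=\; \frac{1}{d} \sum_{Q \in |\PP^1_{\FF_q}|}\ \sum_{i=1}^d \max\big(0,\ \ord_Q(\alpha_i)\big).
\]
Every summand on the right is nonnegative, so discarding all places except $Q=0$ gives
\[
h(\alpha) \;\geq\; \frac{1}{d} \sum_{i=1}^d \max\big(0,\ \ord_0(\alpha_i)\big).
\]

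First I would pin down exactly which conjugates and which valuation appear here. Since $\alpha$ is totally $T$-adic, its minimal polynomial splits completely over $\FF_q\Ls{T}$, so we may take $\alpha_1,\dots,\alpha_d$ to be its $d$ distinct roots inside $\FF_q\Ls{T}$ (distinctness being Corollary~\ref{cor:separable}), and $\ord_0$ to be the standard valuation on the discretely valued field $\FF_q\Ls{T}$, whose value group is $\ZZ$. Concretely this is the instance of the valuation lemma preceding the Proposition in which one takes $\FF_q\Ps{T}$ for $R$ and $\ord_0$ for $v$: the totally $T$-adic hypothesis is precisely what lets us choose this $R$ and realize all the roots in its fraction field. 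In particular $\ord_0(\alpha_i)\in\ZZ$ for every $i$, so any conjugate with $\ord_0(\alpha_i)>0$ in fact satisfies $\ord_0(\alpha_i)\geq 1$.

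The conclusion is then immediate: among the $d$ conjugates, $r$ have positive valuation and hence valuation at least $1$, so
\[
\sum_{i=1}^d \max\big(0,\ \ord_0(\alpha_i)\big) \;\geq\; \sum_{i\,:\,\ord_0(\alpha_i)>0} \ord_0(\alpha_i) \;\geq\; r,
\]
and combining this with the displayed lower bound for $h(\alpha)$ yields $h(\alpha)\geq r/d$.

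There is no genuine obstacle here; the only point deserving care is the bookkeeping in the middle paragraph, namely confirming that the multiset of valuations $\{\ord_0(\alpha_i)\}$ entering the height formula may be computed using the honest integer-valued valuation on $\FF_q\Ls{T}$ simultaneously for all $d$ conjugates. That is exactly where the totally $T$-adic hypothesis is used, and it is what forces these values to be integers rather than merely rationals, legitimizing the step from ``positive valuation'' to ``valuation $\geq 1$.''
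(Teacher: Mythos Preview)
Your argument is correct and follows essentially the same route as the paper's proof: restrict the local height formula to the single place $Q=0$, then use the totally $T$-adic hypothesis to force $\ord_0(\alpha_i)\in\ZZ$ so that positive valuation means valuation at least~$1$. The only cosmetic difference is that the paper phrases the height via field embeddings $\sigma:K\hookrightarrow\overline{\FF_q(T)}$ (Corollary~\ref{cor:field_embeddings}) rather than via the roots $\alpha_1,\dots,\alpha_d$, but the content is identical.
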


\begin{proof}
%% Let $f \in \FF_q[T][x]$ be the minimal polynomial for $\alpha$, and consider the
%% Newton polygon of $f$ with respect to $\ord_T$. By hypothesis, the segments of
%% the Newton polygon with negative slope have total projection length~$r$. Since
%% $\alpha$ is totally $T$-adic, each of these segments has integral
%% slope. Therefore, the leftmost segment of the Newton polygon must meet the
%% $y$-axis at $(0,r)$ or above, and we find that $\deg_T(f) \geq r$. The height
%% inequality follows immediately.
Write $K = \FF_q(T,\alpha)$. Since $\alpha$ is totally $T$-adic, any conjugate
$\beta$ with positive $T$-adic valuation necessarily satisfies $\ord_0(\beta)
\geq 1$. Therefore,
  \begin{align*}
h(\alpha) &= \frac{1}{d}\sum_{Q \in |\PP^1_{\FF_q}|} \ \ 
\sum_{\sigma \colon K \hookrightarrow \overline{\FF_q(T)}} \max\left(0,
\ord_Q\left(\sigma(\alpha)\right)\right) \\
& \geq \frac{1}{d} \sum_{\sigma \colon K \hookrightarrow \overline{\FF_q(T)}} \max\left(0,
\ord_0\left(\sigma(\alpha)\right)\right) \geq \frac{r}{d}. \qedhere
  \end{align*}
\end{proof}

\begin{lemma}
  \label{lem:zero-valuation}
Let $\alpha \not\in \FF_q^\times$ be totally $T$-adic of degree~$d$ over $\FF_q(T)$,
and suppose that $\ell$ of its Galois conjugates have $T$-adic valuation
zero. Then
\[
h(\alpha) \geq \frac{\ell}{d(q-1)}.
\]
\end{lemma}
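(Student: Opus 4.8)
The plan is to reduce this statement to Lemma~\ref{lem:positive-valuation} by translating $\alpha$ by a constant, after a pigeonhole argument applied to the reductions modulo $T$ of the conjugates of valuation zero. First I would clear away the degenerate possibilities: if $\ell = 0$ the asserted inequality reads $h(\alpha) \ge 0$, and if $\alpha = 0$ then its unique conjugate has infinite valuation, so again $\ell = 0$. In any other case $\alpha$ is nonconstant, since an element of $\bar\FF_q \smallsetminus \FF_q$ is never totally $T$-adic (the only elements of $\FF_q\Ls{T}$ algebraic over $\FF_q$ already lie in $\FF_q$); so from now on assume $\alpha$ nonconstant and $\ell \ge 1$.

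Let $\beta_1, \dots, \beta_d \in \FF_q\Ls{T}$ be the conjugates of $\alpha$, which are distinct by Corollary~\ref{cor:separable}. Each conjugate $\beta$ with $\ord_0(\beta) = 0$ is a unit of $\FF_q\Ps{T}$, so it reduces modulo $T$ to an element $\bar\beta \in \FF_q^\times$. Since there are $\ell$ such conjugates and $\#\FF_q^\times = q - 1$, some $c \in \FF_q^\times$ is the reduction of at least $\ell/(q-1)$ of them, and for each of those we get $\ord_0(\beta - c) \ge 1 > 0$. Next I would apply Lemma~\ref{lem:positive-valuation} to $\alpha' = \alpha - c$: it lies in $\cT_q$, because $c \in \FF_q \subseteq \cT_q$ and $\cT_q$ is a field (Proposition~\ref{prop:Tq_is_a_field}); it has degree $d$ over $\FF_q(T)$, because $\FF_q(T)(\alpha') = \FF_q(T)(\alpha)$; it is nonzero, because $\alpha \notin \FF_q^\times$ rules out $\alpha = c$; and its conjugates $\beta_1 - c, \dots, \beta_d - c$ include at least $\ell/(q-1)$ of positive valuation. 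Lemma~\ref{lem:positive-valuation} then gives $h(\alpha') \ge \ell/\big(d(q-1)\big)$, and since $h(\alpha) = h(\alpha - c)$ by Lemma~\ref{lem:invariant} (or directly from Lemma~\ref{lem:ht_facts}, using $h(c) = 0$), the bound follows.

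There is no genuinely hard step here. The single idea is that one conjugate of valuation zero contributes nothing to the height on its own, but $q - 1$ of them sharing a reduction modulo $T$ can be moved --- by a single translation --- into the hypothesis of Lemma~\ref{lem:positive-valuation}. The only things requiring a little care are the degenerate cases handled above and the verification that translating by an element of $\FF_q$ preserves the degree, the totally $T$-adic property, and the height --- precisely where the field structure of $\cT_q$ and the $\PGL_2(\bar\FF_q)$-invariance of $h$ get used.
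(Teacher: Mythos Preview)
Your proof is correct, but it takes a genuinely different route from the paper's. The paper does not pigeonhole to a single residue and translate; instead it uses the multiplicative identity $(q-1)h(\alpha) = h(\alpha^{q-1}) = h(\alpha^{q-1}-1)$ together with the observation that every conjugate $\beta$ of valuation zero satisfies $\ord_0(\beta^{q-1}-1)\ge 1$, and then bounds the height from below by the single contribution at the place $T=0$. In effect the paper handles all residues in $\FF_q^\times$ simultaneously via the polynomial $x^{q-1}-1$, while you handle one residue at a time via translation and Lemma~\ref{lem:positive-valuation}. Your argument is arguably more elementary and even yields the marginally sharper bound $h(\alpha)\ge \lceil \ell/(q-1)\rceil/d$. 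The paper's approach, on the other hand, gives a cleaner equality analysis: when $h(\alpha)=\ell/\big(d(q-1)\big)$ it forces $\ord_0(\beta^{q-1}-1)=1$ for \emph{every} conjugate of valuation zero at once, which is exactly what feeds into Proposition~\ref{prop:newton_criterion} and Corollary~\ref{cor:well-distributed}. Your pigeonhole step, as written, only controls the conjugates reducing to the single chosen $c$; to recover the full well-distributed-roots statement you would need to run the translation argument for each $c\in\FF_q^\times$ separately.
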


\begin{proof}
Since $\alpha$ is totally $T$-adic, any conjugate $\beta$ with $T$-adic
valuation~0 satisfies
\[
\ord_0(\beta^{q-1} - 1) \geq 1.
\]
Writing $K = \FF_q(T,\alpha)$, we see that
\begin{align*}
  (q-1)h(\alpha) = h\big(\alpha^{q-1}\big) &= h\big( \alpha^{q-1} - 1\big) \\
&= \frac{1}{d}\sum_{Q \in |\PP^1_{\FF_q}|} \ \
\sum_{\sigma \colon K \hookrightarrow \overline{\FF_q(T)}} \max\left(0,
\ord_Q\left(\sigma\left(\alpha^{q-1} - 1\right)\right)\right) \\
&\geq \frac{1}{d} \sum_{\substack{\sigma \colon K \hookrightarrow \overline{\FF_q(T)}
       \\ \ord_0(\sigma(\alpha)) = 0}}
\max\left(0,
  \ord_0\left(\sigma\left(\alpha^{q-1} - 1\right)\right)\right) \\
  &\geq \frac{1}{d} \sum_{\substack{\sigma \colon K \hookrightarrow \overline{\FF_q(T)}
       \\ \ord_0(\sigma(\alpha)) = 0}} 1 = \frac{\ell}{d}.
\end{align*}
This is equivalent to the desired inequality.
\end{proof}

\begin{proof}[Analytic proof of Theorem~\ref{thm:height_bound}]
Write $r$ for the number of Galois conjugates of $\alpha$ with positive $T$-adic
valuation, and write $s$ for the number of conjugates with negative
valuation. If $d$ is the degree of $\alpha$ over $\FF_q(T)$, then the number of
conjugates with valuation zero is $d - r - s$.

Lemma~\ref{lem:positive-valuation} applied to $\alpha$ shows that
\[
h(\alpha)\geq \frac{r}{d}.
\]
Applying the same lemma to $\alpha^{-1}$ yields
\[
h(\alpha)\geq \frac{s}{d}.
\]
Finally, applying Lemma~\ref{lem:zero-valuation} gives us
\[
(q-1)h(\alpha) \geq \frac{d - r - s}{d}.
\]
Summing the last three displayed inequalities gives $(q+1)h(\alpha) \geq 1$.
\end{proof}

By studying this proof, we can deduce a number of properties that a polynomial
$f \in \FF_q[T][x]$ must have in order to be the minimal polynomial of a totally
$T$-adic function with height $\frac{1}{q+1}$.

\begin{proposition}
  \label{prop:newton_criterion}
Let $\alpha \in \cT_q$ have height $\frac{1}{q+1}$, and let $f \in \FF_q[T][x]$ be
its minimal polynomial. Write $r$ for the number of roots of $f$ with positive
$T$-adic valuation. Then the following hold:
\begin{enumerate}
\item $\deg_x(f) = r(q+1)$ and $\deg_T(f) = r$;
\item For each $u \in \FF_q$, the Newton polygon of $f(x+u)$ with respect to
  $\ord_0$ is the lower convex hull of the points $(0,r)$, $(r,0)$, $(rq,0)$,
    and $(r(q+1),r)$;
\item The leading coefficient of $f$ is $T^r$; and
\item For each $u \in \FF_q$, the constant coefficient of $f(x+u)$ is of the
  form $aT^r$ for some $a \in \FF_q^\times$.
\end{enumerate}
\end{proposition}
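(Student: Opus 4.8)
The plan is to re-examine the analytic proof of Theorem~\ref{thm:height_bound} under the hypothesis that equality $h(\alpha)=\frac{1}{q+1}$ holds, and to extract the forced constraints on the conjugates of $\alpha$. Write $r$ (resp.\ $s$) for the number of conjugates of $\alpha$ with positive (resp.\ negative) $\ord_0$, and note that all conjugates lie in $\FF_q\Ls{T}$, so these valuations are integers. That proof furnishes the three inequalities $h(\alpha)\ge r/d$ (Lemma~\ref{lem:positive-valuation} applied to $\alpha$), $h(\alpha)\ge s/d$ (the same lemma applied to $\alpha^{-1}$), and $(q-1)h(\alpha)\ge (d-r-s)/d$ (Lemma~\ref{lem:zero-valuation}), whose sum is $(q+1)h(\alpha)\ge 1$. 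Since the sum is tight, all three — and hence every intermediate inequality inside the proofs of those three lemmas — must be equalities. From $h(\alpha)=r/d$ we get $d=r(q+1)$, so $\deg_x(f)=r(q+1)$ and $\deg_T(f)=h(\alpha)\deg_x(f)=r$, which is part~(1); and then $s=r$ and $d-r-s=r(q-1)$. Tracing the equality cases: combining the equality case of Lemma~\ref{lem:positive-valuation} for $\alpha$ and for $\alpha^{-1}$ shows $\ord_Q(\beta)=0$ for every conjugate $\beta$ and every place $Q\ne 0$ of $\FF_q(T)$; the $r$ conjugates with $\ord_0(\beta)>0$ in fact satisfy $\ord_0(\beta)=1$ (here integrality of $\ord_0$ is essential); and the $r$ conjugates with $\ord_0(\beta)<0$ satisfy $\ord_0(\beta)=-1$.

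The next step is to transport this data from $\alpha$ to each translate $\alpha-u$, $u\in\FF_q$. Since $\FF_q\subseteq\cT_q$ and the height is $\PGL_2(\bar\FF_q)$-invariant (Lemma~\ref{lem:invariant}), $\alpha-u$ is again nonconstant, totally $T$-adic, of height $\frac{1}{q+1}$ and degree $d$, and its minimal polynomial is $f(x+u)$ (the defining normalizations survive the substitution, as the leading coefficient does not change). Applying the already-proved part~(1) to $\alpha-u$, the number of its conjugates with positive $\ord_0$ is $d/(q+1)=r$, so the conclusions of the previous paragraph apply verbatim to $\alpha-u$: among the roots $\alpha_1-u,\dots,\alpha_d-u$ of $f(x+u)$, exactly $r$ have $\ord_0=1$, exactly $r$ have $\ord_0=-1$, the remaining $r(q-1)$ have $\ord_0=0$, and $\ord_Q(\alpha_i-u)=0$ for all $i$ and all $Q\ne 0$ — in particular for $Q=\infty$.

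From here parts~(3), (2), (4) are short computations. For (3): equation \eqref{eq:local1} at $Q=0$ gives $\ord_0(a_d)=\sum_i\max(0,-\ord_0(\alpha_i))=r$, while \eqref{eq:local2} with $n=\deg_T(f)=r$ gives $r+\ord_\infty(a_d)=\sum_i\max(0,-\ord_\infty(\alpha_i))=0$; so the leading coefficient $a_d\in\FF_q[T]$ is divisible by exactly $T^r$ and has degree $r$, hence is a scalar times $T^r$, and the monic normalization forces $a_d=T^r$. For (2): the Newton polygon of $f(x+u)$ with respect to $\ord_0$ has rightmost vertex $(r(q+1),\ord_0(a_d))=(r(q+1),r)$, its slopes are the negatives of the $\ord_0$-valuations of its roots, namely $+1$ with multiplicity $r$, $0$ with multiplicity $r(q-1)$, and $-1$ with multiplicity $r$; walking leftward produces the vertices $(rq,0)$, $(r,0)$, $(0,r)$, which is exactly the claimed lower convex hull. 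For (4): from $f(u)=\pm a_d\prod_i(\alpha_i-u)$ together with the data of the second paragraph we get $\ord_0(f(u))=r+\sum_i\ord_0(\alpha_i-u)=r$ and $\ord_\infty(f(u))=-r+\sum_i\ord_\infty(\alpha_i-u)=-r$, so $f(u)\in\FF_q[T]$ (which is the constant coefficient of $f(x+u)$) is nonzero — as $f$ is irreducible of degree $\ge 2$ — divisible by exactly $T^r$, and of degree $r$, whence $f(u)=aT^r$ for some $a\in\FF_q^\times$.

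The main obstacle is the bookkeeping in the first paragraph: one must match each inequality in the analytic proof to the quantity it controls, be scrupulous that ``$\ord_0(\beta)>0$'' sharpens to ``$\ord_0(\beta)=1$'' (this uses that $\ord_0$ is $\ZZ$-valued on the conjugates, i.e.\ that $\alpha$ is totally $T$-adic), and observe that the vanishing $\ord_Q(\beta)=0$ for $Q\ne 0$ genuinely follows by combining the two separate applications of Lemma~\ref{lem:positive-valuation}. Once that valuation data is assembled, the translation trick and the Newton-polygon computations are routine.
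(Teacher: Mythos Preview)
Your proof is correct and follows essentially the same strategy as the paper: analyze the equality case of the three inequalities in the analytic proof of Theorem~\ref{thm:height_bound}, then transport the conclusions to each translate $\alpha-u$ via Lemma~\ref{lem:invariant}. The only noteworthy difference is in how you deduce (3) and (4): you extract the extra fact that $\ord_Q(\beta)=0$ for every conjugate $\beta$ and every $Q\neq 0$, and then feed this into \eqref{eq:local1}--\eqref{eq:local2} (resp.\ the product formula for $f(u)$) to pin down $a_d$ and $f(u)$ directly, whereas the paper first proves (2) and then argues that $T^r$ divides the leading and constant coefficients while the degree bound $\deg_T(f)=r$ from (1) forbids any other irreducible factor. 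Both routes are short; yours has the mild advantage of making the ``no contribution away from $T=0$'' phenomenon explicit.
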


\begin{proof}
In order to have $h(\alpha) = \frac{1}{q+1}$, it is necessary that equality is
attained in all of the inequalities in the proof of
Theorem~\ref{thm:height_bound}. As in the proof, we take $r$ (resp. $s$) to be
the number of conjugates of $\alpha$ whose $T$-adic valuation is positive
(resp. negative). We let $d = \deg_x(f)$. In particular, the following
equalities hold:
\[
\frac{1}{q+1} = \frac{\deg_T(f)}{\deg_x(f)} = h(\alpha) = \frac{r}{d} = \frac{s}{d}.
\]
We see immediately that $d = r(q+1)$ and $\deg_T(f) = r$, which proves the first
assertion of the proposition. We also find that $r = s$.

Looking at the proof of Lemma~\ref{lem:positive-valuation}, we see that each
conjugate $\beta$ with positive valuation must have $\ord_0(\beta) = 1$. It
follows that the left-most segment of the Newton polygon of $f$ has vertices
$(0,r)$ and $(r,0)$. Similarly, since the equality $h(\alpha) = \frac{s}{d}$
holds, each conjugate $\beta$ with negative valuation must have $\ord_0(\beta) =
-1$. Hence, the right-most segment of $f$ has vertices $(d-s,0) = (rq,0)$ and
$(d,s) = (r(q+1),r)$. As $f$ has $T$-adically integral coefficients, we have
found all of the vertices of the Newton polygon for $f$. This proves the second
assertion in the case $u = 0$.

To obtain the full power of the second assertion, fix $u \in \FF_q$ and observe
that $\alpha - u$ also has height $1/(q+1)$. The preceding paragraphs apply to
the minimal polynomial of $\alpha - u$, namely $f(x+u)$.

For the third assertion, let $c \in \FF_q[T]$ be the leading coefficient of
$f$. It is monic by the definition of the minimal polynomial, and it is
divisible by $T^r$ by the second assertion. If $c$ were divisible by any
irreducible factor different from $T$, then its degree would necessarily be
larger than $r$, contradicting the first assertion.

Finally, we look at the fourth assertion. By the second assertion, the
polynomial $f(x+u)$ has constant coefficient divisible by $T^r$. If the constant
coefficient were divisible by any irreducible factor different from $T$, then
its degree would be larger than $r$, contradicting the first assertion. (Note
that the first assertion also applies to $f(x+u)$ because it is the minimal
polynomial of $\alpha - u$, which has height $1/(q+1)$.)
\end{proof}

\begin{corollary}[Well-distributed Roots]
  \label{cor:well-distributed}
Let $\alpha \in \cT_q$ have height $\frac{1}{q+1}$; let $f \in \FF_q[T][x]$ be
its minimal polynomial; and let $r$ be the number of roots of $f$ with positive
$T$-adic valuation. For each $u \in \PP^1(\FF_q)$, $f$ has $r$ distinct roots
whose reduction modulo $T$ is $u$. Moreover, the following hold:
\begin{enumerate}
\item A root $\beta$ of $f$ that reduces to $u \in \FF_q$ satisfies
  $\ord_0(\beta - u) = 1$.
\item A root $\beta$ of $f$ that reduces to $\infty$ satisfies
  $\ord_0(\beta^{-1}) = 1$. 
\end{enumerate}
\end{corollary}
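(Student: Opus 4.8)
The plan is to read everything off the Newton polygon description provided by Proposition~\ref{prop:newton_criterion}, using separability for distinctness. To begin, I would record that since $\alpha$ is totally $T$-adic, $f$ and each shift $f(x+u)$ with $u\in\FF_q$ split completely over $\FF_q\Ls{T}$, so every root lies in $\FF_q\Ls{T}$ and has an integer $T$-adic valuation; moreover $f$ is separable by Corollary~\ref{cor:separable}, so the roots of $f$ (and hence of each $f(x+u)$) are pairwise distinct.

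Next I would extract the valuations of the roots. Fix $u\in\FF_q$. By Proposition~\ref{prop:newton_criterion}(2), the Newton polygon of $f(x+u)$ with respect to $\ord_0$ has three segments: one of slope $-1$ and horizontal width $r$, one of slope $0$ and width $r(q-1)$, and one of slope $1$ and width $r$. By the standard Newton polygon correspondence for a completely split polynomial, $f(x+u)$ therefore has exactly $r$ roots of valuation $1$, exactly $r(q-1)$ roots of valuation $0$, and exactly $r$ roots of valuation $-1$; in particular no root has valuation $\geq 2$. Since the roots of $f(x+u)$ are precisely the elements $\beta-u$ as $\beta$ ranges over the roots of $f$, this translates into: $f$ has exactly $r$ roots with $\ord_0(\beta-u)=1$, exactly $r(q-1)$ with $\ord_0(\beta-u)=0$, exactly $r$ with $\ord_0(\beta-u)=-1$, and none with $\ord_0(\beta-u)\geq 2$.

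Finally I would convert "reduction modulo $T$" into these valuation conditions. A root $\beta\in\FF_q\Ls{T}$ reduces to $u\in\FF_q$ under the reduction map $\PP^1(\FF_q\Ls{T})\to\PP^1(\FF_q)$ exactly when $\ord_0(\beta-u)\geq 1$, hence, by the previous paragraph, exactly when $\ord_0(\beta-u)=1$; so there are precisely $r$ such roots, they are distinct by separability, and they satisfy assertion~(1). Likewise $\beta$ reduces to $\infty$ exactly when $\ord_0(\beta)<0$; taking $u=0$ above, the roots with $\ord_0(\beta)<0$ are exactly the $r$ roots with $\ord_0(\beta)=-1$, i.e.\ $\ord_0(\beta^{-1})=1$, which gives assertion~(2) together with the count for $u=\infty$.

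I do not expect a genuine obstacle here: once Proposition~\ref{prop:newton_criterion} is in hand the argument is pure bookkeeping. The only step meriting care is the identification of the reduction fiber over $u$ with the valuation-$1$ stratum of $f(x+u)$ — in particular ruling out a root with $\ord_0(\beta-u)\geq 2$ — and this is exactly the point where the sharp shape of the Newton polygon in Proposition~\ref{prop:newton_criterion}(2) is used.
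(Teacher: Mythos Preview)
Your proposal is correct and follows essentially the same approach as the paper: both read the counts and the precise valuations off the Newton polygon description in Proposition~\ref{prop:newton_criterion}, with separability supplying distinctness. Your treatment is in fact slightly more streamlined, since you extract $\ord_0(\beta^{-1})=1$ for the roots reducing to $\infty$ directly from the rightmost slope-$+1$ segment, whereas the paper passes through the reciprocal polynomial $f^*$ to reach the same conclusion.
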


\begin{proof}
  The Newton polygon statement in Proposition~\ref{prop:newton_criterion}
  implies that $f(x+u)$ has exactly $r$ roots with positive $T$-adic
  valuation. In particular, it shows that $f$ has precisely $r$ roots that
  reduce to $-u \in \FF_q$, independent of $u$. The Newton polygon for $f$ also
  shows that it has $r$ roots that reduce to $\infty$.

  Suppose that $\beta$ is a root of $f$ with positive $T$-adic valuation.  The
  proof of Proposition~\ref{prop:newton_criterion} shows that
  $\ord_0(\beta) = 1$, lest we violate the equality $h(\alpha) = r/d$.

  Next suppose that $\beta$ is a root of $f$ that reduces to $u \in \FF_q$.  Then
  $\beta - u$ is a root of $f(x+u)$, which is also the minimal polynomial of an
  element of height $\frac{1}{q+1}$. Since $\beta - u$ has positive valuation, the
  preceding paragraph shows that $\ord_0(\beta-u) = 1$.

  Finally, suppose that $\beta$ is a root of $f$ that reduces to $\infty$. Then
  $\beta^{-1}$ reduces to $0$, and it is a root of $f^*$, the reversed polynomial,
  which is also the minimal polynomial of a totally $T$-adic element of height
  $\frac{1}{q+1}$.
  Just as above, we conclude that $\ord_0(\beta^{-1}) = 1$.
\end{proof}

%%%%%%%%%%%%%%%%%%%%%%%%%%%%%%%%%%%%%%%%%%%%%%%%%%%%%%%%%%%%%%%%%%%%%%%%%%%%%%%%

\subsection{Geometry and arithmetic of associated curves}
\label{sec:geometry}

Recall that the \textbf{gonality} of an algebraic curve $X_{/\FF_q}$ is the
minimum degree of a nonconstant $\FF_q$-morphism to $\PP^1$. Each function
$\alpha \in \overline{\FF_q(T)}$ gives rise to an algebraic curve with function
field $\FF_q(T,\alpha)$; we define the gonality of $\alpha$ to be the gonality
of this curve.

\begin{theorem}
  \label{thm:point_count}
  Let $\alpha$ be a totally $T$-adic function such that $h(\alpha) =
  \frac{1}{q+1}$, and let $C_\alpha$ be its associated algebraic curve. Then
  $\#C_\alpha(\FF_q) = n(q+1)$, where $n$ is the gonality of $C_\alpha$.
\end{theorem}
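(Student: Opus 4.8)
The plan is to sandwich $\#C_\alpha(\FF_q)$ between two bounds, exactly along the lines of the geometric discussion in the introduction, and then observe that the hypothesis $h(\alpha)=1/(q+1)$ forces the two bounds to pinch together, which simultaneously determines the point count and the gonality. Write $f\in\FF_q[T][x]$ for the minimal polynomial of $\alpha$, and set $d=\deg_x(f)$ and $m=\deg_T(f)$, so that $h(\alpha)=m/d=1/(q+1)$ and hence $d=m(q+1)$. Since $h(\alpha)>0$, the function $\alpha$ is nonconstant, so both $T$ and $x$ define nonconstant $\FF_q$-morphisms $C_\alpha\to\PP^1$; the degree of the morphism defined by $x$ is $[\FF_q(T,\alpha):\FF_q(\alpha)]$, and viewing $f$ as a polynomial of degree $m$ in $T$ with coefficients in $\FF_q[x]$ shows this degree is at most $m$. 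In particular, the gonality $n$ of $C_\alpha$ satisfies $n\le m$.

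For the lower bound I would argue as in the derivation of \eqref{eq:pts_upper}. Because $\alpha$ is totally $T$-adic it is separable over $\FF_q(T)$ by Corollary~\ref{cor:separable}, so $f$ has exactly $d$ distinct roots, all of which lie in $\FF_q\Ls{T}$. Each such root $\beta$ determines an $\FF_q(T)$-embedding $\FF_q(T,\alpha)\hookrightarrow\FF_q\Ls{T}$, hence a place of $\FF_q(T,\alpha)$ lying over $T=0$ that is unramified with residue field $\FF_q$; distinct roots give distinct places. Thus $C_\alpha$ has at least $d$ rational points lying over $T=0$, that is, $d\le\#C_\alpha(\FF_q)$.

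For the upper bound, the key point is that any nonconstant $\FF_q$-morphism $g\colon C_\alpha\to\PP^1$ of degree $e$ satisfies $\#C_\alpha(\FF_q)\le e(q+1)$: every rational point of $C_\alpha$ maps to one of the $q+1$ rational points of $\PP^1$, and the fiber of $g$ over any such point is an effective divisor of degree $e$, hence contains at most $e$ rational points. Applying this to a morphism realizing the gonality $n$ and combining with the lower bound yields
\[
d\;\le\;\#C_\alpha(\FF_q)\;\le\;n(q+1)\;\le\;m(q+1)\;=\;d.
\]
Therefore equality holds throughout: $\#C_\alpha(\FF_q)=d=n(q+1)$ and $n=m=\deg_T(f)$, which is the assertion of the theorem. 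Combined with Proposition~\ref{prop:newton_criterion}, this also shows that $n$ equals the number of conjugates of $\alpha$ with positive $T$-adic valuation.

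The step I expect to require the most care is the lower bound, where one must correctly translate the algebraic hypothesis that the minimal polynomial splits completely over $\FF_q\Ls{T}$ into the geometric statement that $C_\alpha$ has $d$ distinct degree-one places over $T=0$; concretely this uses Corollary~\ref{cor:separable} together with the identification $\FF_q(T,\alpha)\otimes_{\FF_q(T)}\FF_q\Ls{T}\cong\prod_\beta\FF_q\Ls{T}$, which puts the $d$ roots in bijection with the places over $T=0$ and shows each such place is rational. The remaining ingredients — the bound $\deg(x)\le\deg_T(f)$ and the fiber-counting estimate — are routine. (In passing, $\FF_q$ is algebraically closed in $\FF_q\Ls{T}$, hence in $\FF_q(T,\alpha)$, so $C_\alpha$ is geometrically irreducible; this is not needed above.)
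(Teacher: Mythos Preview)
Your argument is correct and follows essentially the same route as the paper: both proofs sandwich $\#C_\alpha(\FF_q)$ between the lower bound $d$ coming from the completely split fiber over $T=0$ and the upper bound $\deg(\psi)(q+1)$ coming from fiber-counting over $\PP^1(\FF_q)$, and then use $h(\alpha)=1/(q+1)$ to force equality and identify the gonality. The only cosmetic difference is that the paper first applies the upper bound with $\psi=x$ (using $\deg(x)=\deg_T(f)$) to pin down the point count and afterward shows this degree is the gonality, whereas you insert a gonality-realizing map directly into the chain $d\le\#C_\alpha(\FF_q)\le n(q+1)\le m(q+1)=d$; the content is the same.
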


\begin{proof}
  Since $h(\alpha) = \frac{1}{q+1}$, both inequalities \eqref{eq:pts_lower} and
  \eqref{eq:pts_upper} are attained, which yields
  \[
  d = \#C_\alpha(\FF_q) = n(q+1),
  \]
  where $n = \deg(x) = \deg_T(f)$. It remains to show that $n$ is the gonality
  of $C_\alpha$. Suppose that $\psi : C_\alpha \to \PP^1$ is any nonconstant
  morphism. Then  the same argument that gave us \eqref{eq:pts_lower} would
  show that $\#C_\alpha(\FF_q) \leq \deg(\psi)(q+1)$. Hence, $n \leq
  \deg(\psi)$. That is, $n$ is the minimum degree of a morphism to $\PP^1$.
\end{proof}

\begin{corollary}
  \label{cor:r=n}
  Let $\alpha$ be a totally $T$-adic function of height $\frac{1}{q+1}$. The
  gonality of $\alpha$ is equal to the number of Galois conjugates of $\alpha$
  with positive $T$-adic valuation.
\end{corollary}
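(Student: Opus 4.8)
The statement to prove is that for $\alpha \in \cT_q$ with $h(\alpha) = 1/(q+1)$, the gonality $n$ of $C_\alpha$ equals the number $r$ of Galois conjugates of $\alpha$ having positive $T$-adic valuation. The plan is to combine the two characterizations of $n$ already in hand: from Theorem~\ref{thm:point_count} we know $n$ is the gonality and $\#C_\alpha(\FF_q) = n(q+1)$, while from Proposition~\ref{prop:newton_criterion}(1) we know $\deg_x(f) = r(q+1)$ and $\deg_T(f) = r$, where $f$ is the minimal polynomial of $\alpha$.

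First I would recall from the geometric setup in the introduction that $n$, the gonality, coincides with $\deg_T(f) = \deg(x\colon C_\alpha \to \PP^1)$—this is exactly what the proof of Theorem~\ref{thm:point_count} establishes, since the morphism induced by the coordinate $T$ has degree $\deg_x(f) = d = n(q+1)$ and the morphism induced by $x$ has degree $\deg_T(f)$, which that proof identifies as the minimal degree of any morphism to $\PP^1$. Thus $n = \deg_T(f)$. Then I would invoke Proposition~\ref{prop:newton_criterion}(1), which gives $\deg_T(f) = r$ directly. Chaining these equalities yields $n = \deg_T(f) = r$, which is the claim.

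There is essentially no obstacle here: the corollary is a bookkeeping consequence of results proved immediately beforehand. The only point requiring a moment's care is making sure the symbol $n$ in Theorem~\ref{thm:point_count} (defined there as $\deg(x) = \deg_T(f)$ and shown to be the gonality) is matched up correctly with the quantity $\deg_T(f) = r$ from Proposition~\ref{prop:newton_criterion}; both theorems apply under the identical hypothesis $h(\alpha) = 1/(q+1)$, so there is no compatibility issue. One could alternatively phrase the whole argument purely in terms of Theorem~\ref{thm:point_count}: since $\#C_\alpha(\FF_q) = d = r(q+1)$ by Proposition~\ref{prop:newton_criterion}(1) and also $\#C_\alpha(\FF_q) = n(q+1)$ by the theorem, one immediately gets $n = r$. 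I expect to write the proof in two or three sentences along these lines.
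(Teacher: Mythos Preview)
Your proposal is correct and matches the paper's approach essentially verbatim: the paper links $r$ and $n$ through $\deg_x(f)$ (using $\deg_x(f)=r(q+1)$ from Proposition~\ref{prop:newton_criterion} and $\deg_x(f)=\#C_\alpha(\FF_q)=n(q+1)$ from Theorem~\ref{thm:point_count}), which is exactly the alternative phrasing you describe at the end; your primary phrasing via $\deg_T(f)$ is a trivially equivalent variant.
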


\begin{proof}
Let $f$ be the minimal polynomial of $\alpha$. Write $r$ for the number of its
roots with positive $T$-adic valuation. Write $n$ for the gonality of $\alpha$.
On one hand, Proposition~\ref{prop:newton_criterion} shows that $\deg_x(f) =
r(q+1)$. On the other, the preceding theorem says that $\deg_x(f) = \#C_\alpha(\FF_q) =
n(q+1)$.
\end{proof}

To summarize, when $\alpha$ has height $\frac{1}{q+1}$, the associated
curve $C_\alpha$ has the following properties:
\begin{itemize}
\item $\#C_\alpha(\FF_q) = n(q+1)$ for some $n \geq 1$;
\item $C_\alpha$ admits rational functions $T$ and $x$ of degrees $n(q+1)$ and $n$,
  respectively;
\item the function field of $C_\alpha$ is $\kappa(C_\alpha) = \FF_q(T,x) = \FF_q(T,\alpha)$;
\item the rational function $T$ vanishes on $C_\alpha(\FF_q)$
\end{itemize}
In fact, these properties characterize the curves that arise from minimum-height
functions.

\begin{theorem}
  \label{thm:geom_to_arith}
  Suppose we are given a positive integer $n \geq 1$, a smooth, proper,
  geometrically irreducible curve $X_{/\FF_q}$, and separable rational functions
  $u,v \in \kappa(X)$ satisfying the following conditions:
  \begin{enumerate}
\item $\#X(\FF_q) = n(q+1)$;    
\item $u$ has degree $n(q+1)$ and $v$ has degree $n$;
\item $\kappa(X) = \FF_q(u,v)$; and
\item $u$ vanishes at all points of $X(\FF_q)$.
\end{enumerate}
Then $v$ generates a totally $u$-adic extension of $\FF_q(u)$ of degree
$n(q+1)$, and $h(v) = 1/(q+1)$.
\end{theorem}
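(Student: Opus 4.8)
The plan is to reverse the argument that preceded Theorem~\ref{thm:geom_to_arith}. First I would use condition (3) to identify $\kappa(X)$ with $\FF_q(u)(v)$ and hence view $v$ as an algebraic function over the rational function field $\FF_q(u)$; since $v$ is separable by hypothesis, its minimal polynomial $g$ over $\FF_q(u)$ is separable of degree $d := [\FF_q(u)(v):\FF_q(u)]$. The morphism $v\colon X\to\PP^1$ has degree $n$ by condition (2), and the morphism $u\colon X\to\PP^1$ has degree $n(q+1)$; the latter degree is exactly $[\kappa(X):\FF_q(u)] = d$, so $d = n(q+1)$. This already pins down the shape of the minimal polynomial: writing $g\in\FF_q[u][x]$ in the normalized form of \S\ref{sec:ff_heights}, we will need $\deg_x(g) = d = n(q+1)$ and, to get $h(v) = \deg_u(g)/\deg_x(g) = 1/(q+1)$, we must show $\deg_u(g) = n$.

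The bound $\deg_u(g)\le n$ is a degree count: $\deg_u(g)$ is the number of places of $\kappa(X)$ lying over the place $u = \infty$ of $\FF_q(u)$, counted with ramification, i.e.\ the number of poles of $u$ on $X$ with multiplicity, which is $\deg(u) = n(q+1)$ divided by... no — more carefully, $\deg_u(g)$ equals the total pole order of $v$ along the fiber $u=\infty$, which is at most $\deg(v) = n$ since $v$ has at most $n$ poles counted with multiplicity on all of $X$. That gives $h(v)\le 1/(q+1)$. For the reverse inequality I would invoke the fiber-counting argument already in the excerpt: condition (4) says $u$ vanishes on all of $X(\FF_q)$, so the fiber $u = 0$ contains at least the $n(q+1)$ rational points of $X$ by condition (1); but $\deg(u) = n(q+1)$, so the fiber $u=0$ consists of \emph{exactly} these $n(q+1)$ rational points, each with multiplicity one. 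Hence $v$ takes $n(q+1)$ distinct values in... rather, $v$ is regular at each such point (or, after the standard coordinate change, we may assume so), and the $n(q+1)$ values $v(P)$ for $P\in X(\FF_q)$ are the roots of $g(x)$ specialized at $u=0$. Since these are $d = n(q+1)$ elements of $\FF_q$ and $\deg_x(g) = d$, the polynomial $g$ splits completely over $\FF_q$ at $u = 0$, hence by Hensel's lemma over $\FF_q\Ls{u}$ — establishing that $v$ is totally $u$-adic. Then Theorem~\ref{thm:height_bound} (applied with $T$ replaced by $u$) gives $h(v)\ge 1/(q+1)$, and combined with the upper bound we conclude $h(v) = 1/(q+1)$ and $d = n(q+1)$.

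The step I expect to be the main obstacle is handling the roots of $g$ that are "poles of $v$" or that reduce to $\infty$ — that is, making the splitting argument at $u=0$ rigorous when some rational points of $X$ might a priori be poles of $v$, and when the constant term / leading coefficient of $g$ could vanish at $u=0$. The clean fix is to exploit the $\PGL_2(\FF_q)$-invariance of the whole setup: replacing $v$ by $\gamma\circ v$ for a suitable $\gamma\in\PGL_2(\FF_q)$ does not change $h(v)$ (Lemma~\ref{lem:invariant}), does not change whether $v$ is totally $u$-adic, and does not change conditions (1)--(4); so we may choose $\gamma$ so that $\infty$ is not a value of $v$ on the finitely many rational points of $X$ and so that the leading and constant coefficients of the minimal polynomial behave well — i.e.\ so that $v$ is $u$-adically integral at every point of $X(\FF_q)$. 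After this normalization the specialization argument at $u=0$ goes through verbatim: $g(x) \bmod u$ has $n(q+1)$ roots in $\FF_q$, namely the $v(P)$, which are distinct because the fiber $u=0$ is reduced, so $g\bmod u$ is a product of $n(q+1)$ distinct linear factors and Hensel lifts the factorization to $\FF_q\Ps{u}$. One last bookkeeping point to verify is that $\deg_u(g) = n$ exactly (not just $\le n$): equality forces itself, since $h(v)\ge 1/(q+1)$ from Theorem~\ref{thm:height_bound} together with $\deg_x(g) = n(q+1)$ yields $\deg_u(g)\ge n$, matching the upper bound.
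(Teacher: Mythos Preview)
Your overall strategy is sound, but the Hensel step at $u=0$ has a genuine gap, and the proposed $\PGL_2$-fix does not repair it. Because $\#X(\FF_q) = n(q+1)$ and $v$ has degree $n$, every fiber of $v$ over a point of $\PP^1(\FF_q)$ must contain exactly $n$ rational points; in particular $v$ maps $X(\FF_q)$ \emph{onto} $\PP^1(\FF_q)$, so no choice of $\gamma\in\PGL_2(\FF_q)$ can arrange that $\infty$ is avoided. Worse, the values $v(P)$ for $P\in X(\FF_q)$ are not distinct: there are $n(q+1)$ of them but only $q+1$ possible targets, each hit exactly $n$ times. Hence $g\bmod u$ has each of its roots with multiplicity $n$, and the simple-root Hensel lift you invoke does not apply. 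What you actually need is an argument that the $n$ points of $X(\FF_q)$ above each $c\in\PP^1(\FF_q)$ give $n$ \emph{distinct} roots of $g$ in $\FF_q\Ls{u}$ reducing to $c$.

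The paper sidesteps both difficulties by counting places rather than lifting roots. Since $u$ vanishes at all $n(q+1)$ rational points, these points give $n(q+1)$ distinct places of $\kappa(X)$ lying over the place $u=0$ of $\FF_q(u)$; but $[\kappa(X):\FF_q(u)]=n(q+1)$, so the fundamental identity forces each such place to be unramified with residue degree~$1$, i.e.\ $u=0$ splits completely. The paper also obtains $\deg_u(g)=n$ directly from $\deg(v)=n$ (a generic fiber of $v$ has $n$ points, hence the plane model has $u$-degree $n$), which makes your detour through Theorem~\ref{thm:height_bound} for the lower bound unnecessary.
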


\begin{proof}
  Write $\kappa(X)$ as an extension of $\FF_q(u)$. As $\kappa(X) = \FF_q(u,v)$
  has transcendence degree~1, there is an irreducible polynomial $f \in
  \FF_q(u)[x]$ such that $f(v) = 0$. By clearing denominators, we may assume
  that $f \in \FF_q[u][x]$. Viewing $u,x$ as indeterminates, this equation gives
  a (possibly singular) plane model for $X$.

  To see that $v$ is totally $u$-adic over $\FF_q(u)$, observe that $\kappa(X) /
  \FF_q(u)$ has degree $n(q+1)$, so $\kappa(X)$ has at most $n(q+1)$ places
  above $\ord_0$. By assumption, we see that $u$ annihilates all $n(q+1)$
  $\FF_q$-rational points of $X$, so $\kappa(X)$ has precisely $n(q+1)$ distinct
  places above $\ord_0$ (corresponding to the valuations given by vanishing at
  the points of $X(\FF_q)$).

  To compute the height of $v$, we must ascertain the degree of $f$ as a
  polynomial in $u$ and $x$. As $u$ has degree $n(q+1)$ as a rational function,
  fixing a generic value for $u$ gives $n(q+1)$ distinct $\bar \FF_q$-rational
  points on $X$. Thus $\deg_x(f) = n(q+1)$. Similarly, since $v$ has degree $n$
  as a rational function, fixing a generic value for $v$ gives $n$ $\bar
  \FF_q$-rational points on $X$. Hence $\deg_u(f) = n$. We conclude that
  \[
  h(v) = \frac{\deg_u(f)}{\deg_x(f)} = \frac{1}{q+1}. \qedhere
  \]
\end{proof}

Having shown that totally $T$-adic functions of minimum height are closely
related to certain special algebraic curves, we now give bounds on the genera of
those curves.

\begin{theorem}[Genus Inequalities]
  \label{thm:genus_bound}
  Let $C_{/\FF_q}$ be the algebraic curve associated to a totally $T$-adic function of
  height $\frac{1}{q+1}$. Write $g(C)$ for its genus and $n$ for its gonality. Then
  \[
  \frac{(n-1)(q+1)}{2\sqrt{q}} \leq g(C) \leq
  \frac{1}{2}(q+1)(n-1)^2 + \frac{1}{2}(q-1)(n-1).
  \]
\end{theorem}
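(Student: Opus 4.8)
The plan is to establish the two bounds by entirely separate means, since one is an arithmetic bound coming from point counts and the other is a geometric bound coming from the structure of the associated plane curve. For the lower bound $g(C) \geq \frac{(n-1)(q+1)}{2\sqrt q}$, I would combine Theorem~\ref{thm:point_count} with the Weil bound. By Theorem~\ref{thm:point_count} we have $\#C(\FF_q) = n(q+1)$, while the Hasse--Weil inequality gives $\#C(\FF_q) \leq q + 1 + 2\sqrt q\, g(C)$. Subtracting and rearranging yields $n(q+1) - (q+1) \leq 2\sqrt q\, g(C)$, i.e. $(n-1)(q+1) \leq 2\sqrt q\, g(C)$, which is exactly the claimed inequality. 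This direction is routine; the only thing to be careful about is that the Weil bound requires $C$ to be smooth, proper, and geometrically irreducible, all of which hold for $C_\alpha$ by construction (and geometric irreducibility follows from $f$ being irreducible over $\FF_q(T)$, hence over $\bar\FF_q(T)$ after noting the constants of $\kappa(C_\alpha)$ are $\FF_q$ — or one can simply invoke that $C_\alpha$ is the smooth proper model of an irreducible affine plane curve).

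For the upper bound, the idea is to bound the genus of $C$ via the genus of a singular plane model together with the contribution of its singularities, or equivalently via the degree and the ramification of the degree-$n$ map. Let $f \in \FF_q[T][x]$ be the minimal polynomial of $\alpha$, which by Proposition~\ref{prop:newton_criterion} has $\deg_x(f) = n(q+1)$ and $\deg_T(f) = n$. The curve $C$ is the smooth proper model of the plane curve $\{f = 0\}$. The arithmetic genus of a plane curve of bidegree $(n(q+1), n)$ in $\PP^1 \times \PP^1$ is $(n(q+1) - 1)(n-1)$, and the geometric genus $g(C)$ is this minus the total $\delta$-invariant of the singularities. So the strategy is to show the singularities account for at least $(n(q+1)-1)(n-1) - \frac12(q+1)(n-1)^2 - \frac12(q-1)(n-1)$ of drop. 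I expect it is cleaner, however, to run the argument through the Riemann--Hurwitz formula for the map $v = x \colon C \to \PP^1$ of degree $n$: one has $2g(C) - 2 = n(2\cdot 0 - 2) + \deg(\mathfrak{d})$ where $\mathfrak{d}$ is the different, so $g(C) = 1 - n + \frac12\deg(\mathfrak{d})$, and it remains to bound $\deg(\mathfrak{d})$ from above. The ramification of $x$ over a generic fiber is controlled by $\deg_x(\operatorname{disc}_x \text{ or relevant resultant})$, and one feeds in $\deg_T f = n$ and $\deg_x f = n(q+1)$.

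The main obstacle will be getting the constant in the upper bound exactly right. The two candidate routes — the Plücker/$\delta$-invariant bound for a bidegree-$(n(q+1),n)$ curve, versus Riemann--Hurwitz for the degree-$n$ map — give different-looking expressions, and matching the stated bound $\frac12(q+1)(n-1)^2 + \frac12(q-1)(n-1)$ requires using the specific structure established in Proposition~\ref{prop:newton_criterion} and Corollary~\ref{cor:well-distributed}: namely, that the leading coefficient of $f$ is exactly $T^n$, that over $T = u$ for each $u \in \FF_q$ the fiber consists of $n$ points of multiplicity $q+1$ pattern dictated by the Newton polygon, and that the ``$d - r - s = (q-1)n$'' conjugates of valuation zero are grouped into $q-1$ reductions each occurring $n$ times. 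I would first reduce the upper bound to a statement purely about $\deg_T f$, $\deg_x f$, and the behavior of $f$ over $T \in \PP^1(\FF_q) \cup \{\text{generic}\}$, then compute the different of $v\colon C\to\PP^1$ by accounting for ramification over the finitely many critical values. I anticipate the bookkeeping — separating tame from wild ramification and verifying the singular fibers over $\FF_q$-points contribute exactly the $\frac12(q-1)(n-1)$ correction term — to be the delicate part, and I would not be surprised if the clean derivation instead goes through an Euler-characteristic computation on the two $\PP^1$-bundle model directly, invoking $\deg_T f = n$ to control the relevant line bundle.
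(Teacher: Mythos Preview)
Your lower bound is exactly the paper's argument: combine $\#C(\FF_q)=n(q+1)$ from Theorem~\ref{thm:point_count} with the Weil bound.

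For the upper bound, the paper uses precisely the first approach you mention and then abandon. It embeds $C$ birationally as a curve $D \subset \PP^1\times\PP^1$ of bidegree $(n,n(q+1))$ via $(T,x)$, computes $p_a(D)=(n-1)(n(q+1)-1)$ by adjunction, and then uses Proposition~\ref{prop:newton_criterion} to show that $D$ has multiplicity exactly $n$ at each of the $q+1$ rational points $(0,u)$, $u\in\PP^1(\FF_q)$, in the fiber $T=0$: the Newton-polygon shape of $f(x+u)$ (first segment from $(0,n)$ to $(n,0)$, constant term $aT^n$) forces $f\in\mathfrak m_P^n\smallsetminus\mathfrak m_P^{n+1}$. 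Since a point of multiplicity $\mu$ contributes at least $\tfrac12\mu(\mu-1)$ to the genus drop, one gets
\[
g \le p_a(D) - (q+1)\cdot\tfrac12 n(n-1),
\]
which simplifies to the stated bound. No claim of ordinariness is needed; only the inequality $\delta_P\ge\binom{\mu_P}{2}$.

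Your preferred Riemann--Hurwitz route for the degree-$n$ map $x\colon C\to\PP^1$ goes in the wrong direction for this purpose: to bound $g$ \emph{above} you would need to bound the different \emph{above}, i.e.\ control \emph{all} ramification of $x$, including potential wild ramification over arbitrary fibers about which Proposition~\ref{prop:newton_criterion} says nothing. The adjunction route is cleaner precisely because it only asks you to \emph{exhibit} singularities (which the known structure over $T=0$ supplies), not to rule out extra ramification elsewhere. Your instinct in the final sentence---that the ``$\PP^1$-bundle model'' is the right venue---is correct; stick with it rather than Riemann--Hurwitz.
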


\begin{proof}
  For ease of notation in the proof, write $g = g(C)$. The Weil bound shows that
  \[
\big| \# C(\FF_q) - (q+1) \big| \leq 2g\sqrt{q}.
\]
Using the count in Theorem~\ref{thm:point_count}, we find
\[
(n-1)(q+1) \leq 2g \sqrt{q},
\]
which is equivalent to the lower bound.

For the upper bound, we use the adjunction formula for a particular model of $C$
lying on the surface $\PP^1 \times \PP^1$. As above, we write $T$ and $x$ for
rational functions on $C$. To avoid confusion, we write $\pi_T$ and $\pi_x$ for
the corresponding coordinate projections $\PP^1 \times \PP^1 \to
\PP^1$. Combining these gives a commutative diagram
\[
\begin{tikzcd}
C \ar[rrd,bend left=30, "x"] \ar[ddr,bend right=30, "T"] \ar[dr,"\iota"] & & \\
& \PP^1 \times \PP^1 \ar[r,"\pi_x"] \ar[d, "\pi_T"]  & \PP^1 \ar[d] \\
&  \PP^1 \ar[r] & \Spec \FF_q
\end{tikzcd}
\]
Let $D$ be the image of the morphism $\iota$. Since $\kappa(C) = \FF_q(T,x)$,
the induced morphism $C \to D$ is birational. Note that $D$ has bidegree
$(n,n(q+1))$. 

Write $K$ for a canonical divisor on the surface $\PP^1 \times \PP^1$; it has
bidegree $(-2,-2)$. If $Y \subset \PP^1 \times \PP^1$ is a curve with bidegree
$(d_1,d_2)$, then the adjunction formula \cite[Prop.~V.1.5]{Hartshorne_Bible}
shows that $Y$ has arithmetic genus
\begin{align*}
  p_a(Y) &= 1 + \frac{1}{2}\left(Y^2 + K\cdot Y\right) \\
  &= 1 + \frac{1}{2}\left(2d_1d_2 - 2d_1 - 2d_2\right) \\
  &= (d_1 - 1)(d_2 - 1).
\end{align*}
Applying this formula to the curve $D$, we find that
\[
  p_a(D) = (n-1)\left(n(q+1) - 1\right) = (q+1)(n-1)^2 + q(n-1).
  \]
This gives a coarse upper bound on the (geometric) genus of $C$, but we can do
better if we estimate the singularities of $D$.

By abuse of notation, let us use $T,x$ as affine coordinates on $\PP^1 \times
\PP^1$. This abuse is justified as follows: if $f \in \FF_q[T][x]$ is the
minimal polynomial of our function with height $1/(q+1)$, then $f$ is an affine
equation for $D$.

We claim that $D$ passes through each $\FF_q$-rational point of the fiber
$\pi_T^{-1}(0)$ with multiplicity $n$. This is equivalent to saying that $f \in
\mm_P^n \smallsetminus \mm_P^{n+1}$ for $P \in \pi_T^{-1}(0)$, where $\mm_P$ is
the maximal ideal of the local ring of $P \in \PP^1 \times \PP^1$. Consider
first the point $(T,x) = (0,0)$ on the affine piece of
$D$. Proposition~\ref{prop:newton_criterion} describes the Newton polygon of $f$
as a polynomial in $x$. Its first segment has projection length $r$ and slope
$-1$, from which we deduce that each monomial $a_i(T)x^i$ of $f$ has $i +
\ord_0(a_i) \geq r$. So $f \in \mm^r_{(0,0)}$. Moreover, the constant term is of
the form $aT^r$ for some nonzero $a \in \FF_q$, so $f \not\in
\mm^{r+1}_{(0,0)}$. We have a similar statement for the Newton polygon of
$f(x+u)$ for $u \in \FF_q$, so $f \in \mm^r_{(0,u)} \smallsetminus
\mm^{r+1}_{(0,u)}$. Finally, replacing $f$ with its reciprocal polynomial allows
us to look at the point $(0,\infty)$. Since the Newton polygon for $f$ is
left-to-right symmetric, its Newton polygon is unaffected by passing to the
reciprocal polynomial. Hence, $f \in \mm^r_{(0,\infty)} \smallsetminus
\mm^{r+1}_{(0,\infty)}$. Now we invoke Corollary~\ref{cor:r=n} to see that $r =
n$.

Write $\mu_P$ for the multiplicity to which $D$ passes through the point $P$ of
$\PP^1 \times \PP^1$. Section V.3 of \cite{Hartshorne_Bible} and
the preceding paragraph shows that the geometric genus of $D$ is given by
\begin{align*}
  g &\le p_a(D) - \sum_{P \in D} \frac{1}{2}\mu_P (\mu_P - 1) \\
  &\leq p_a(D) - \sum_{\substack{P \in D(\FF_q) \\ \pi_T(P) = 0}} \frac{1}{2}n(n-1) \\
  &= (q+1)(n-1)^2 + q(n-1) - (q+1)\cdot \frac{1}{2}n(n-1) \\
  &= \frac{1}{2}(q+1)(n-1)^2 + \frac{1}{2}(q-1)(n-1).
\end{align*}
This completes the proof of the upper bound.
\end{proof}

%%%%%%%%%%%%%%%%%%%%%%%%%%%%%%%%%%%%%%%%%%%%%%%%%%%%%%%%%%%%%%%%%%%%%%%%%%%%%%%%
%%%%%%%%%%%%%%%%%%%%%%%%%%%%%%%%%%%%%%%%%%%%%%%%%%%%%%%%%%%%%%%%%%%%%%%%%%%%%%%%

\section{Constructions: examples of minimum height}
\label{sec:examples}

Our goal in this section is to address two questions:

\begin{question}
  For a given finite field $\FF_q$, does there exist a function $\alpha \in
  \cT_q \smallsetminus \FF_q$ with minimum height: 
  $h(\alpha) = \frac{1}{q+1}$? (Yes.)
\end{question}

\begin{question}
  Given a finite field $\FF_q$ and a positive integer $n$, does there exist a
  function $\alpha \in \cT_q \smallsetminus \FF_q$ with gonality~$n$ such that
  $h(\alpha) = \frac{1}{q+1}$? (In many cases, yes.)
\end{question}

The second question is a refinement of the first. We completely answer the first
question with an elementary construction. The associated algebraic curves are
all rational, so no interesting geometry is involved. To answer the second
question, we give geometric constructions. We use cyclic $n$-covers of the
projective line to produce examples when $n \mid q - 1$, and we use
hyperelliptic curves to produce examples with $n = 2$ when $q > 2$ is even. We
remark that none of these constructions yields an example of $\alpha \in \cT_q$
with height $\frac{1}{q+1}$ and gonality $n \geq q$.

Given a nonconstant polynomial $f \in \FF_q[T][x]$, it is natural to define its
height to be $h(f) = \frac{\deg_T(f)}{\deg_x(f)}$.  If $f$ is irreducible and
$\alpha$ is a root of $f$ in $\overline{\FF_q(T)}$, then $h(f) = h(\alpha)$. The
averaging argument in the next lemma shows that the height of $f$ is at least
the minimum of the heights of its irreducible factors. We will use this to
provide an easy irreducibility criterion in our first construction of functions
of minimum height.

\begin{lemma}
  \label{lem:weighted_average}
Let $f(x)\in\FF_q[T][x]$ be a polynomial with no root in $\FF_q$ and which
splits completely over $\FF_q\Ls{T}$. Assume that
\[
(q+1)\deg_T(f)=\deg_x(f).
\]  
Then each root $\alpha$ of $f(x)$ has height $h(\alpha)=\frac{1}{q+1}$ and
degree $[\FF_q(T)(\alpha):\FF_q(T)]$ equal to a multiple of $q+1$.
\end{lemma}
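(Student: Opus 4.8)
The plan is to exhibit $h(f)$ as a weighted average of the heights of the irreducible factors of $f$ and then invoke the sharp lower bound $h\geq\frac{1}{q+1}$ of Theorem~\ref{thm:height_bound} to force every term of that average to equal $\frac{1}{q+1}$. The first point to nail down is that every root $\alpha$ of $f$ is a \emph{nonconstant} element of $\cT_q$. Since $f$ splits completely over $\FF_q\Ls{T}$, each root lies in $\FF_q\Ls{T}$; and the only elements of $\FF_q\Ls{T}$ that are algebraic over $\FF_q$ are the elements of $\FF_q$ itself (a nonzero one would be a root of unity, hence a unit of $\FF_q\Ps{T}$ congruent mod $T$ to an element of $\FF_q$, and the difference, being algebraic over $\FF_q$ with positive $T$-adic valuation, must vanish). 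As $f$ has no root in $\FF_q$, no root of $f$ is constant, so Theorem~\ref{thm:height_bound} applies to every root $\alpha$ and gives $h(\alpha)\geq\frac{1}{q+1}$.

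Next I would factor $f$ in the UFD $\FF_q[T][x]$ as $f=c(T)\,f_1^{e_1}\cdots f_k^{e_k}$, where $c(T)\in\FF_q[T]$ is the content and the $f_i$ are pairwise non-associate primitive irreducible polynomials with $e_i\geq1$. Each $f_i$ divides $f$, so it splits completely over $\FF_q\Ls{T}$, and being primitive and irreducible it agrees up to a scalar in $\FF_q^\times$ with the minimal polynomial of any of its roots $\alpha_i$; hence $h(f_i)=h(\alpha_i)\geq\frac{1}{q+1}$. Using the additivity of $\deg_T$ and $\deg_x$ under multiplication in $\FF_q[T][x]$, we get $\deg_x(f)=\sum_i e_i\deg_x(f_i)$ and $\deg_T(f)=\deg_T(c)+\sum_i e_i\deg_T(f_i)\geq\sum_i e_i\deg_T(f_i)$, so
\[
\frac{1}{q+1}=h(f)=\frac{\deg_T(f)}{\deg_x(f)}\ \geq\ \frac{\sum_i e_i\deg_x(f_i)\,h(f_i)}{\sum_i e_i\deg_x(f_i)}.
\]
The right-hand side is a convex combination of the numbers $h(f_i)\geq\frac{1}{q+1}$ with strictly positive weights $e_i\deg_x(f_i)$; since it is at most $\frac{1}{q+1}$, every $h(f_i)$ equals $\frac{1}{q+1}$ (and $\deg_T(c)=0$, although that is not needed). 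Therefore $h(\alpha)=\frac{1}{q+1}$ for every root $\alpha$ of $f$.

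For the divisibility statement, fix a root $\alpha$ of $f$; its minimal polynomial is a scalar multiple of one of the $f_i$, so $[\FF_q(T)(\alpha):\FF_q(T)]=\deg_x(f_i)$. Since $\alpha\in\cT_q$ now has height $\frac{1}{q+1}$, Proposition~\ref{prop:newton_criterion}(1) applies and gives $\deg_x(f_i)=r(q+1)$, where $r$ is the number of roots of $f_i$ with positive $T$-adic valuation; in particular $[\FF_q(T)(\alpha):\FF_q(T)]$ is a multiple of $q+1$. I expect the only real friction to be the bookkeeping at the start---ruling out constant roots (which reduces to the fact that the constants of $\FF_q\Ls{T}$ lie in $\FF_q$) and noting that the content $c(T)$ can only enlarge $\deg_T(f)$ while leaving $\deg_x(f)$ alone. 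After that, the proof is just the elementary principle that an average of quantities each at least $\frac{1}{q+1}$ which equals $\frac{1}{q+1}$ forces each of them to equal $\frac{1}{q+1}$, followed by a direct appeal to Proposition~\ref{prop:newton_criterion}.
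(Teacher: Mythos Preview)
Your argument is correct and follows essentially the same route as the paper: factor $f$ over $\FF_q[T][x]$, express $h(f)=\frac{1}{q+1}$ as a weighted average of the $h(\alpha_i)\geq\frac{1}{q+1}$, force equality, and then invoke Proposition~\ref{prop:newton_criterion} for the divisibility by $q+1$. You are in fact a bit more careful than the paper in handling a possible content factor $c(T)$ and repeated irreducible factors, and in justifying that no root is constant; the paper simply factors $f=f_1\cdots f_m$ into irreducibles and asserts ``by hypothesis'' that each $\alpha_i$ is totally $T$-adic and not in $\FF_q$.
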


\begin{proof}
We may assume that $f$ is nonconstant as a polynomial in $x$.  Factor
$f(x)=f_1(x)\cdots f_m(x)$ into irreducible polynomials in $\FF_q[T][x]$, and
let $\alpha_i$ be a root of $f_i(x)$. By hypothesis, each $\alpha_i$ is totally
$T$-adic and not in $\FF_q$. We find that
\begin{equation}
  \label{eq:WeightedAverage}
\frac{1}{q+1}=\frac{\deg_T(f)}{\deg_x(f)}=r_1h(\alpha_1)+r_2h(\alpha_2)+\dots +r_mh(\alpha_m),
\end{equation}
where $r_i=\deg_x(f_i)/\deg_x(f)$.  Since $\sum r_i=1$,
\eqref{eq:WeightedAverage} is a weighted average of the heights $h(\alpha_i)$,
which are all at least $\frac{1}{q+1}$ by Theorem~\ref{thm:height_bound}.  We
conclude that $h(\alpha_i)=\frac{1}{q+1}$ for all $i$.  That each degree
$[\FF_q(T)(\alpha_i):\FF_q(T)]$ is a multiple of $q+1$ follows immediately from
Proposition~\ref{prop:newton_criterion}.
\end{proof}

\begin{theorem}
  \label{thm:min_height}
  Fix a prime power $q$. There exists $\alpha \in \cT_q$ of height
  $\frac{1}{q+1}$ and gonality~1.
\end{theorem}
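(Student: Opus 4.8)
The plan is to answer the question by an explicit construction: I would write down a single polynomial $f\in\FF_q[T][x]$ of the shape that Proposition~\ref{prop:newton_criterion} forces on a minimum-height minimal polynomial with $r=1$ (degree $q+1$ in $x$, degree $1$ in $T$, leading $x$-coefficient $T$), verify that it splits completely over $\FF_q\Ls{T}$, apply Lemma~\ref{lem:weighted_average} to conclude its roots have height $\tfrac{1}{q+1}$, and finally observe that such a root generates a rational function field over $\FF_q(T)$, so that the associated curve is $\PP^1$ and has gonality $1$.

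Concretely, I would first fix a monic polynomial $a(x)\in\FF_q[x]$ of degree $q+1$ with no root in $\FF_q$; for instance $a(x)=x^{q+1}-x^2+1$ works, since $u^q=u$ for $u\in\FF_q$ gives $a(u)=u^2-u^2+1=1$ (any irreducible polynomial of degree $q+1$ would do as well). Set
\[
f(T,x)=T\,a(x)-(x^q-x)\ \in\ \FF_q[T][x].
\]
Then $\deg_x f=q+1$ with leading coefficient $T$, and $\deg_T f=1$, so $(q+1)\deg_T f=\deg_x f$; moreover $f$ has no root in $\FF_q$, because $f(T,u)=T\,a(u)-0=a(u)\,T$ is a nonzero element of $\FF_q[T]$ for each $u\in\FF_q$. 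Granting the splitting claim below, Lemma~\ref{lem:weighted_average} shows that every root $\alpha$ of $f$ is totally $T$-adic with $h(\alpha)=\tfrac{1}{q+1}$ and with $[\FF_q(T)(\alpha):\FF_q(T)]$ a multiple of $q+1$; since this degree is also at most $\deg_x f=q+1$, it equals $q+1$ and $f$ is irreducible.

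The crux is to show $f$ splits completely over $\FF_q\Ls{T}$. Reducing modulo $T$ gives $f(0,x)=-(x^q-x)=-\prod_{u\in\FF_q}(x-u)$, which has the $q$ distinct roots $u\in\FF_q$, each simple since $\partial_x f(0,x)=1$; Hensel's lemma lifts each to a root of $f$ in $\FF_q\Ps{T}$. That accounts for only $q$ of the $q+1$ roots --- the last one lies ``over $\infty$'' --- so I would apply the same argument to the reciprocal polynomial $f^*(T,x):=x^{q+1}f(T,1/x)=T\,a^*(x)+(x^q-x)$, where $a^*$ is the reciprocal of $a$: since $f^*(0,x)=x^q-x$ has $x=0$ as a simple root and $f^*(T,0)=a^*(0)\,T\neq0$, Hensel produces a nonzero root $\eta\in T\,\FF_q\Ps{T}$ of $f^*$, whence $1/\eta\in\FF_q\Ls{T}$ is a root of $f$ of negative $T$-adic valuation, distinct from the previous $q$. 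Thus $f$ has $\deg_x f=q+1$ distinct roots in $\FF_q\Ls{T}$, so it splits completely there. (Equivalently one may argue geometrically: the degree-$(q+1)$ morphism $\psi\colon\PP^1\to\PP^1$, $x\mapsto(x^q-x)/a(x)$, satisfies $\psi^{-1}(0)=\PP^1(\FF_q)$ --- the finite points because $u^q-u=0$ and $a(u)\neq0$, and $\infty$ because $\deg(x^q-x)<\deg a$ --- a reduced fiber of full size $q+1$, so $\psi$ is unramified over $0$, and since $\FF_q\Ps{T}$ is Henselian the fiber over $\Spec\FF_q\Ls{T}$ consists of $q+1$ rational points.)

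For the gonality: from $f(T,\alpha)=0$ we get $T=(\alpha^q-\alpha)/a(\alpha)\in\FF_q(\alpha)$, so $\FF_q(T)(\alpha)=\FF_q(\alpha)$ is a purely transcendental extension of $\FF_q$, hence $C_\alpha\cong\PP^1_{\FF_q}$, whose gonality is $1$; also $\alpha$ is nonconstant since $h(\alpha)=\tfrac{1}{q+1}\neq0$. I expect the splitting claim to be the only real obstacle, and within it the subtle point that reducing $f$ modulo $T$ exposes only $q$ of the $q+1$ roots --- the missing root sits at infinity and must be extracted either by the reciprocal-polynomial trick or by the étale/Henselian argument above; everything else is routine bookkeeping with degrees and valuations.
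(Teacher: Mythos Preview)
Your argument is correct and follows essentially the same route as the paper: construct an explicit polynomial of the shape $T\,a(x)-(x^q-x)$ with $a$ monic of degree $q+1$ having no $\FF_q$-root, check total splitting via Hensel for the $q$ finite roots plus a separate look at the root at infinity, and invoke Lemma~\ref{lem:weighted_average} for the height and irreducibility. Your choice $a(x)=x^{q+1}-x^2+1$ handles all $q$ uniformly, whereas the paper splits into $q$ odd (taking $a(x)=x^{q+1}-c$ with $c$ a nonsquare) and $q$ even (taking $a(x)=x^{q+1}+x+c$ with $x^2+x+c$ irreducible); you also make the gonality-$1$ claim explicit by solving $T=(\alpha^q-\alpha)/a(\alpha)$ to see $C_\alpha\cong\PP^1$, which the paper leaves implicit via the structural results of \S\ref{sec:geometry}.
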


\begin{proof}
Suppose first that $q$ is odd and let $c \in \FF_q$ be a nonsquare. Let $\alpha$
be a root of
\[
f(x) = Tx^{q+1} + x^q - x - cT.
\]
The Newton polygon of $f$ with respect to $\ord_0$ shows that it has one root
$\beta$ with $\ord_0(\beta) = -1$. Each $t \in \FF_q$ is a simple root of $f
\pmod{T}$, so an application of Hensel's Lemma shows that $f$ splits completely
over $\FF_q\Ls{T}$. Note that $f$ is nonvanishing on $\FF_q$ because $c$ is not
a square. Hence, $f$ satisfies the hypotheses of
Lemma~\ref{lem:weighted_average}. We deduce that $f$ is irreducible and $\alpha$
has height $\frac{1}{q+1}$.

%% Suppose that $f$ is not irreducible over $\FF_q(T)$, and write $f = gh$ for some
%% $g,h \in \FF_q(T)[x]$. By Gauss's Lemma, we may assume that $g,h \in
%% \FF_q[T][x]$. Since $f$ is linear in $T$, we may further assume that $g$ is
%% linear in $T$ and $h$ is independent of $T$. Setting $T = 0$ shows that
%% \[
%% x^q - x = f(x)|_{T=0} = g(x)|_{T=0} \cdot h(x).
%% \]
%% Either $h \in \FF_q$ or $h$ has an $\FF_q$-rational root $t$. In the latter
%% case, we evaluate the equality $f = gh$ at $x = t$ to see that
%% \[
%% Tt^2 - cT = 0.
%% \]
%% This implies that $c$ is a square in $\FF_q$, contrary to our hypothesis. It
%% follows that $f$ is irreducible, and therefore it is the minimal polynomial
%% of $\alpha$ over $\FF_q[T]$. We conclude that
%% \[
%% h(\alpha) = \frac{\deg_T(f)}{\deg_x(f)} = \frac{1}{q+1}.
%% \]

To complete the proof, we need to consider the case of $q$ even. Choose $c \in
\FF_q$ such that $x^2 + x + c$ has no $\FF_q$-rational root. This is possible
because the Artin-Schreier map $x \mapsto x^2 + x$ is an additive homomorphism
with kernel $\FF_2$; in particular, it is not surjective. Let $c$ be an element
of $\FF_q$ that is not in the image. Let $\alpha$ be a root of
\[
f(x) = Tx^{q+1} + x^q + (T+1)x + cT.
\]
The proof now proceeds exactly as in the case of $q$ odd.   
\end{proof}

\begin{theorem}
  \label{thm:qr_arithmetic}
Let $n \geq 1$ be an integer, and let $q \equiv 1 \pmod n$ be a prime
power. Let $\alpha$ be a root of the polynomial
\[
T^n x^n (x^q - x)^n - (x^q - x)^n + T^n \in \FF_q(T)[x]
\]
Then $\alpha$ is totally $T$-adic with height $1/(q+1)$ and gonality $n$.
\end{theorem}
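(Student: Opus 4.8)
The plan is to exhibit the given polynomial as a plane model of a tame cyclic $n$-cover of $\PP^1$ and then apply Theorem~\ref{thm:geom_to_arith}. Writing $t$ for $T$ and $h(x)=x^n(x^q-x)^n+1$, one has the identity
\[
f(x):=T^n x^n(x^q-x)^n-(x^q-x)^n+T^n=h(x)\bigl(t^n-R(x)\bigr),\qquad R(x):=\frac{(x^q-x)^n}{h(x)},
\]
with $\gcd(h(x),x^q-x)=1$, so that $R$ is in lowest terms; this gives $\deg_x f=n(q+1)$, $\deg_T f=n$, leading coefficient $T^n$, and $f(a)=T^n\neq0$ for $a\in\FF_q$. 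Since $q\equiv1\pmod n$ we have $\mu_n\subseteq\FF_q$ and $p\nmid n$, so $t^n=R(x)$ is a tame Kummer cover of $\PP^1_x$; let $C$ be its smooth proper model.

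The first substantive step is to show this cover has degree exactly $n$ and is geometrically irreducible (equivalently, $f$ is irreducible, whence $C=C_\alpha$, $\alpha$ has degree $n(q+1)$ over $\FF_q(T)$, and $x$ has degree $n$ on $C_\alpha$). The divisor of $R$ has a zero of order $n$ at each point of $\FF_q\subset\PP^1_x$ and at $\infty$, and its poles sit at the roots of $h$ with the expected multiplicities. Using $q\equiv0\pmod p$ one computes $h'(x)=n\,x^n(x^q-x)^{n-1}(x^{q-1}-2)$; since $h\equiv1$ at $0$ and on $\FF_q$, every multiple root of $h$ lies in $\gcd(h,\,x^{q-1}-2)$, a polynomial of degree $\le q-1$, and one checks such roots are double. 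Hence for $n\ge2$ the number of simple roots of $h$ is at least $n(q+1)-2(q-1)>0$, so $h$ has a simple root $\beta$ with $\ord_\beta(R)=-1$ coprime to $n$; thus $R$ is not a proper power in $\overline{\FF_q}(x)$ and $t^n-R$ is irreducible there (the case $n=1$ being trivial).

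The second step is the point count and the divisor of $T$. For each $P\in\PP^1_x(\FF_q)$ one has $\ord_P(R)=n$, and the residual unit of $R$ at $P$ is an $n$-th power in $\FF_q$ — at $P=a\in\FF_q$ it equals $\bigl(\prod_{b\neq a}(a-b)\bigr)^n$, and at $P=\infty$, in the coordinate $s=1/x$, $R=s^n\bigl(1+O(s^{q-1})\bigr)$. As $\mu_n\subseteq\FF_q$, the cover splits completely over each such $P$ into $n$ distinct $\FF_q$-rational places; since every $\FF_q$-point of $C$ lies over $\PP^1_x(\FF_q)$, this gives $\#C(\FF_q)=n(q+1)$. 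These splittings being unramified with $\ord_P(R)=n$, the function $T$ has order $1$ at each of those places and poles only over the roots of $h$, so its zero divisor is $\sum_{Q\in C(\FF_q)}Q$; hence $T$ has degree $n(q+1)$ on $C$ and is separable (it has a simple zero), as is $x$ (the cover is tame).

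Now Theorem~\ref{thm:geom_to_arith} applies with $X=C$, $u=T$, $v=x$ — its hypotheses are exactly $\#C(\FF_q)=n(q+1)$, $\deg T=n(q+1)$, $\deg x=n$, $\FF_q(C)=\FF_q(T,x)$, and $T$ vanishing on $C(\FF_q)$ — and yields that $\alpha=x$ is totally $T$-adic with $h(\alpha)=1/(q+1)$. Finally, since $h(\alpha)=1/(q+1)$ and $\#C_\alpha(\FF_q)=n(q+1)$, Theorem~\ref{thm:point_count} forces the gonality of $C_\alpha$ to be $n$. I expect the main obstacle to be the irreducibility/degree step of the second paragraph: one must control the multiplicities of the roots of $h(x)$ precisely enough to know $R$ is not a proper power in $\overline{\FF_q}(x)$, and then verify carefully that the residual of $R$ at every point of $\PP^1_x(\FF_q)$ — especially $\infty$ — is an $n$-th power so that the cover splits completely there. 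As an alternative to invoking Theorem~\ref{thm:geom_to_arith}, Newton polygons at each $x=a$ with $a\in\FF_q$ together with the reciprocal polynomial at $\infty$ exhibit $n(q+1)$ distinct roots of $f$ in $\FF_q\Ls{T}$, split as $n$ with $\ord_0=1$, $n(q-1)$ with $\ord_0=0$, and $n$ with $\ord_0=-1$; then Lemma~\ref{lem:weighted_average} gives $h(\alpha)=1/(q+1)$ and Corollary~\ref{cor:r=n} (with irreducibility) gives gonality $n$.
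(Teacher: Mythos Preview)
Your proposal is correct and follows essentially the same route as the paper: realize the polynomial as a plane model of the tame cyclic $n$-cover of $\PP^1_x$ (the paper writes it as $y^n=h(x)$ with $h(x)=x^n(x^q-x)^n+1$, which is your $t^n=R(x)$ after the substitution $t=(x^q-x)/y$), verify geometric irreducibility, count $\#C(\FF_q)=n(q+1)$, observe that $T$ vanishes simply along $C(\FF_q)$, and apply Theorem~\ref{thm:geom_to_arith}; the gonality then drops out of Theorem~\ref{thm:point_count} exactly as you say. The only tactical divergence is in the irreducibility step---the paper shows $h=u^n+1$ is not an $\ell$-th power via the derivative/degree trick $u^{n-1}\mid v'$, whereas you locate a simple root of $h$; your ``one checks such roots are double'' is indeed routine (in odd characteristic $h''(\rho)=-2n\,u(\rho)^{n-1}\neq0$, while in characteristic~$2$ the constraint $\rho^{q-1}=2=0$ forces $\rho=0$, so there are no multiple roots at all).
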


Presumably, we could argue that the polynomial in the theorem is totally
$T$-adic using Hensel's lemma, and there is a trick for
showing that it is irreducible. But, at the risk of exposing the conceit, we
allow that we discovered these examples using a simple family of cyclic covers
of the projective line, and our algebraic curve machinery gives a conceptual
geometric proof.

\begin{lemma}
Let $n \geq 1$ be an integer and $q \equiv 1 \pmod n$ a prime power. Consider
the cyclic cover of $\PP^1$ defined by the following affine equation:
\begin{equation}
  \label{eq:cyclic_ex}
  X_{/\FF_q} \colon y^n = x^n(x^q - x)^n + 1.
\end{equation}
Then $X$ is geometrically irreducible.
\end{lemma}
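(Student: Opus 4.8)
The plan is to prove geometric irreducibility of the cyclic cover $X: y^n = x^n(x^q-x)^n + 1$ by showing that the polynomial $P(x) := x^n(x^q-x)^n + 1 \in \FF_q[x]$ is not a $p$-th power and not a proper power of another polynomial, in a way that persists over $\bar\FF_q$. Recall the standard fact that a superelliptic curve $y^n = P(x)$ over a field $k$ (with $n$ prime to the characteristic, or more generally under the Kummer/Artin-Schreier conditions) is irreducible precisely when $P(x)$ is not of the form $c \cdot Q(x)^m$ for any $m \mid n$ with $m > 1$; and it is geometrically irreducible when the same holds over $\bar k$. So the heart of the matter is a factorization-type statement about $P(x) = x^n(x^q-x)^n + 1$ over $\bar\FF_q$.

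First I would observe that $P(x) = \big(x(x^q - x)\big)^n + 1 = R(x)^n + 1$ where $R(x) = x(x^q-x) = x^{q+1} - x^2$. Over $\bar\FF_q$ we may factor $z^n + 1 = \prod_{\zeta} (z - \zeta)$ over the $n$-th roots of $-1$ (using $q \equiv 1 \pmod n$ and $p \nmid n$, these are distinct), so that $P(x) = \prod_{\zeta^n = -1}\big(R(x) - \zeta\big)$. The key point is then that the polynomials $R(x) - \zeta$ for distinct $\zeta$ are pairwise coprime (their difference is a nonzero constant), so any repeated irreducible factor of $P$ would have to come from a repeated factor of a single $R(x) - \zeta$; but $R(x) - \zeta$ is separable, since $R'(x) = (q+1)x^q - 2x = x\big((q+1)x^{q-1} - 2\big)$ has no common root with $R(x) - \zeta$ — indeed at any root $x_0$ of $R'$ we would need both $x_0\big((q+1)x_0^{q-1} - 2\big) = 0$ and $x_0^{q+1} - x_0^2 = \zeta$, and one checks directly that $x_0 = 0$ gives $\zeta = 0$ (impossible since $\zeta^n = -1$) while $(q+1)x_0^{q-1} = 2$ forces, after substituting, a specific nonzero value incompatible with being a root of $R - \zeta$ for all but possibly special $\zeta$. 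This shows $P$ is separable, hence in particular not a $p$-th power and not $c\cdot Q(x)^m$ for $m > 1$.

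Alternatively — and this is probably the cleanest route — I would argue directly via the ramification of the $x$-projection. The cover $X \to \PP^1_x$ is Kummer of degree $n$, and it is connected over $\bar\FF_q$ as soon as the divisor of $P(x)$ on $\PP^1_x$, together with the pole at infinity, is not supported on a divisor all of whose multiplicities are divisible by a common factor $m > 1$ of $n$. The function $R(x) = x^{q+1} - x^2$ has a pole of order $q+1$ at $x = \infty$; hence $P(x) = R(x)^n + 1$ has a pole of order $n(q+1)$ at infinity, and since $\gcd(n(q+1), n) = n$ this alone does not rule out $m = n$. But a root $\zeta_0$ with $R(x_0) = \zeta_0$, $x_0$ a simple root of $R(x) - \zeta_0$ (which exists by the separability argument above, or by noting $R(x) - \zeta_0$ has $q+1 > 1$ roots counted with multiplicity and at most a bounded number can be multiple), contributes a zero of $P$ of multiplicity exactly $1$ to the branch divisor. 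A branch point of multiplicity $1$ forces full ramification of order $n$ there, and a single totally ramified point already implies the cover is connected, i.e., $X$ is geometrically irreducible.

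The main obstacle I anticipate is the separability claim for $R(x) - \zeta$: one must rule out that $R(x) - \zeta$ is inseparable for the relevant values of $\zeta$ (those with $\zeta^n = -1$), which requires checking that the resultant of $R(x) - \zeta$ and $R'(x)$, as a polynomial in $\zeta$, does not vanish at any $n$-th root of $-1$. Since $R'(x) = x\big((q+1)x^{q-1} - 2\big)$, the bad values of $\zeta$ are exactly $R(0) = 0$ and $R(x_0)$ for $x_0$ with $x_0^{q-1} = 2/(q+1)$; at such $x_0$ we compute $R(x_0) = x_0^2\big(x_0^{q-1} - 1\big) = x_0^2\big(\tfrac{2}{q+1} - 1\big) = x_0^2 \cdot \tfrac{1-q}{q+1}$, a value one can show is not an $n$-th root of $-1$ by a direct norm/degree computation (or, failing a clean identity, one falls back on the multiplicity-$1$ branch point argument which only needs \emph{one} simple root of $P$, far more robust). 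I would present the argument via the totally ramified branch point, relegating the separability bookkeeping to the observation that $P(x)$ has at least one simple root, which is immediate from the fact that $P$ and $P' = nR(x)^{n-1}R'(x)$ share a root only where $R(x)^n = -1$ and $R'(x) = 0$ simultaneously — a finite, explicitly bounded set that cannot exhaust the $n(q+1)$ roots of $P$ once $q$ is large enough, with the finitely many small $q$ handled directly.
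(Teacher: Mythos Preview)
Your primary route --- showing that $P(x)=R(x)^n+1$ is separable, hence not a proper power --- has a genuine gap: separability of $P$ can fail. The proposition immediately following this lemma in the paper shows that $X$ is nonsingular (equivalently, $P$ is separable) if and only if $\gcd(x^{2n}+1,x^{q-1}-2)=1$, and this fails for certain powers of $3$ and $5$. Concretely, take $q=5$, $n=4$: each of the four $x_0\in\bar\FF_5$ with $x_0^4=2$ satisfies $R(x_0)=x_0^2(x_0^{q-1}-1)=x_0^2$ and $R(x_0)^4=x_0^8=4=-1$, so $x_0$ is a double root of $P$. Your parenthetical assertion that $R(x_0)$ ``can be shown not to be an $n$-th root of $-1$'' is therefore false in general.

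Your fallback --- exhibit a single simple root of $P$ and use total ramification there to force connectedness of the cover --- is a valid alternative, but ``once $q$ is large enough, with small $q$ handled directly'' is not a proof. What you are missing is a uniform multiplicity bound. Since $R(x_0+t)-R(x_0)=(x_0^q-2x_0)t - t^2 + x_0 t^q + t^{q+1}$, every critical point of $R$ in odd characteristic has ramification index exactly $2$; thus every multiple root of $P$ is a double root, and these lie among the $q-1$ solutions of $x^{q-1}=2$ (the critical point $x_0=0$ is never a root of $P$). Their total contribution to $\deg P$ is at most $2(q-1)<n(q+1)$ for $n\ge 2$, so $P$ always has a simple root; in characteristic $2$ there are no multiple roots at all, and $n=1$ is trivial. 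With this bound in hand your geometric argument is complete and works for every $q$ and $n$.

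The paper takes a different, purely algebraic route that sidesteps the separability question entirely. It invokes the criterion that $y^n-c$ is irreducible over a field $k$ iff $c\notin k^\ell$ for each prime $\ell\mid n$ (and $c\notin -4k^4$ when $4\mid n$), and rules out $u^n+1=v^\ell$ by differentiating: $nu'u^{n-1}=\ell v'v^{\ell-1}$ together with $\gcd(u,v)=1$ forces $u^{n-1}\mid v'$, whence $(n-1)(q+1)\le \deg v'\le \tfrac{n}{\ell}(q+1)-1$, i.e.\ $n-1<n/\ell$, absurd for $n>1$. This two-line degree argument handles all $q,n$ at once, whereas your ramification approach trades that uniformity for a more geometric picture and requires the root-by-root multiplicity analysis above.
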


\begin{proof}
  It suffices to show that
  \[
  y^n - x^n(x^q - x)^n - 1
  \]
  is irreducible in $\bar \FF_q[x,y] = \bar \FF_q[x][y]$. By Gauss's lemma, we further
  reduce to showing that it is irreducible as a univariate polynomial in $y$
  over the field $\bar \FF_q(x)$. To that end, we will use the well known criterion that
  $y^n - c \in k[y]$ is irreducible over a field $k$ if and only if $c \not\in k^\ell$
  for every prime $\ell \mid n$ and $c \not\in -4k^4$ if $4 \mid n$
  \cite[VI,\S9]{Lang_Algebra_2005}.

  Suppose that $\ell \mid n$ for some prime $\ell$. Set $u = x(x^q - x)$. If we
  assume for the sake of a contradiction that $u^n + 1 \in \bar \FF_q(x)^\ell$, then
  there is $v \in \bar \FF_q[x]$ such that
  \[
  u^n + 1 = v^\ell.
  \]
  This equation immediately implies that $u,v$ are coprime. Taking the derivative of both sides, we find that
  \[
  n\ u'\ u^{n-1} = \ell\ v' v^{\ell - 1}.
  \]
  Since $q \equiv 1 \pmod n$, we find that $n$ and $\ell$ are nonzero in
  $\bar \FF_q$. It follows that $u^{n-1} \mid v'$. This implies an inequality of degrees:
  \[
  (n-1)(q+1) = \deg\left(u^{n-1}\right) \leq \deg(v') \le \deg(v) - 1
  = \frac{n}{\ell}(q+1) - 1.
  \]
  Dividing both sides by $q+1$ shows that
  \[
  n - 1 \leq \frac{n}{\ell} - \frac{1}{q+1} < \frac{n}{\ell}.
  \]
  This is absurd unless $n = 1$. But we also assumed $\ell$ is a prime dividing
  $n$, so this is a contradiction.

  The argument in the preceding paragraph goes through essentially verbatim if
  we assume that $4 \mid n$ and $u^n + 1 = -4 v^4$. Note that $n$ even implies
  $q$ is odd, so $4 \ne 0$ in $\bar \FF_q$. We conclude that $y^n - (u^n + 1)$ is
  irreducible over $\bar \FF_q(x)$, and hence $X$ is geometrically irreducible.
\end{proof}

\begin{proof}[Proof of Theorem~\ref{thm:qr_arithmetic}]
Let $\tilde X$ be the normalization of the curve $X$ in \eqref{eq:cyclic_ex}. We
will apply Theorem~\ref{thm:geom_to_arith} to $\tilde X$ after showing that
it satisfies all the requisite properties.  We begin by looking at related
properties of $X$.

First, we show that $X$ has no $\FF_q$-rational singularity.  Setting $z =
y/x^{q+1}$ and $w = 1/x$ gives an equation for $X$ near infinity:
\[
z^n = (1 - w^{q-1})^n + w^{nq+n}.
\]
Since $n \mid (q-1)$, we find that $\mu_n \subset \FF_q$. Thus all points of $X$
at infinity are $\FF_q$-rational, of the form $(0,\zeta)$ for $\zeta \in \mu_n$.
The Jacobian criterion shows that the model $z^n = (1-w^{q-1})^n + w^{nq+n}$ is
nonsingular at all such points.  For the affine patch, we set $g(x) =
x^n(x^q-x)^n + 1$. By the Jacobian criterion, $X$ is singular at a point
$(\rho,\theta)$ if and only if $g(\rho) = g'(\rho) = 0$. For any $u \in
\FF_q$, we see that $g(u) = 1$, so $X$ has no $\FF_q$-rational singular point.

Next, we show that $\#\tilde X(\FF_q) = n(q+1)$. As $\tilde X$ is an $n$-fold
cover of $\PP^1$, we see that $\#\tilde X(\FF_q) \leq n(q+1)$. For the opposite
inequality, observe that $(t,\zeta)$ is an $\FF_q$-rational point of $X$ for $t
\in \FF_q$ and $\zeta \in \mu_n$. In the previous paragraph, we saw that the
model for $X$ near infinity has $n$ additional rational points. No singularity
of $X$ is $\FF_q$-rational, so we conclude that $\# \tilde X(\FF_q) = \#X(\FF_q)
\geq n(q+1)$.

Finally, we observe that $T := (x^q - x) / y$ is a rational function of
degree~$n(q+1)$ which annihilates $X(\FF_q)$. Since $T$ is also a rational
function on $\tilde X$, and since
\[
\kappa(\tilde X) = \kappa(X) = \FF_q(y,x)
= \FF_q(T,x),
\]
we have proved all of the required properties of $\tilde X$. The conclusion of
Theorem~\ref{thm:geom_to_arith} applies with $u = T$ and $v = x$. To get
the explicit form for the polynomial defining $\alpha$, we replace $y$ with
$(x^q - x)/T$ in \eqref{eq:cyclic_ex} and clear denominators. 
\end{proof}

Though we did not need the curve $X$ from \eqref{eq:cyclic_ex} to be smooth for
our arithmetic application, it is amusing to note that $X$ actually is smooth if
we impose a mild additional hypothesis on the characteristic of $\FF_q$.

\begin{proposition}
Let $n \geq 1$ be an integer and $q \equiv 1 \pmod n$ a prime power. Consider
the cyclic cover of $\PP^1$ defined by the affine equation
\[
  X_{/\FF_q} \colon y^n = x^n(x^q - x)^n + 1.
  \]
Then $X$ is nonsingular if and only if $\gcd(x^{2n}+1,x^{q-1} - 2) = 1$. In
particular, $X$ is nonsingular if $q$ is not a power of $3$ or $5$.
\end{proposition}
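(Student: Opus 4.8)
The plan is to apply the Jacobian criterion on the two affine charts used in the proof of Theorem~\ref{thm:qr_arithmetic}, reduce the question of (non)singularity to whether two explicit one-variable polynomials have a common root, and then read off the characteristic restriction. Throughout write $g(x)=x^n(x^q-x)^n+1$ and $u(x)=x(x^q-x)=x^{q+1}-x^2$, so that $g=u^n+1$. Note that $q\equiv 1\pmod n$ forces $\gcd(n,p)=1$, where $p$ is the characteristic, so $n$ is invertible in $\bar\FF_q$. (For $n=1$ the curve $X$ is the graph of the polynomial $x\mapsto x^{q+1}-x^2+1$ and is visibly nonsingular, so assume $n\ge 2$.)

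First I would dispose of the points at infinity exactly as in the proof of Theorem~\ref{thm:qr_arithmetic}: in the chart $w=1/x$, $z=y/x^{q+1}$ the curve is $z^n=(1-w^{q-1})^n+w^{n(q+1)}$, the points over $w=0$ are $(0,\zeta)$ with $\zeta^n=1$, and the $z$-partial of the defining polynomial is $nz^{n-1}$, which is nonzero at such points. Hence $X$ is always nonsingular at infinity, and every possible singularity lies on the affine curve $\{y^n=g(x)\}$. There the partials of $y^n-g(x)$ are $ny^{n-1}$ and $-g'(x)$; since $n\ne 0$, a singular point $(\rho,\theta)$ must have $\theta=0$, whence $g(\rho)=\theta^n=0$. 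So the singular locus of $X$ is precisely $\{(\rho,0): g(\rho)=g'(\rho)=0\}$.

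The computational heart is to simplify the system $g(\rho)=g'(\rho)=0$. The key point is that $q+1\equiv 1\pmod p$, so $u'(x)=(q+1)x^q-2x=x(x^{q-1}-2)$, and therefore $g'=nu^{n-1}u'=nu^{n-1}\,x\,(x^{q-1}-2)$. If $g(\rho)=0$ then $u(\rho)^n=-1$, so $u(\rho)\ne 0$ and $\rho\ne 0$ (indeed $g(0)=1$); hence $g'(\rho)=0$ forces $\rho^{q-1}=2$. But on the locus $\rho^{q-1}=2$ we have $u(\rho)=\rho^2(\rho^{q-1}-1)=\rho^2$, so the condition $g(\rho)=0$ becomes $\rho^{2n}=-1$. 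Conversely any $\rho$ with $\rho^{q-1}=2$ and $\rho^{2n}+1=0$ satisfies $g(\rho)=g'(\rho)=0$. Thus $X$ is singular iff $x^{q-1}-2$ and $x^{2n}+1$ have a common root in $\bar\FF_q$, i.e.\ iff $\gcd(x^{2n}+1,x^{q-1}-2)\ne 1$ in $\FF_q[x]$ (the gcd of polynomials over a field being unchanged under field extension). This gives the stated equivalence.

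For the final assertion, suppose $\rho$ is such a common root and write $q-1=nk$. Then in $\bar\FF_q$ we have $4=(\rho^{q-1})^2=\rho^{2(q-1)}=(\rho^{2n})^{k}=(-1)^k$, so $4=\pm 1$ in the prime field. This fails for $p=2$ (giving $0=1$) and for every prime $p\ge 7$ (where $4\notin\{1,-1\}$), and it can hold only for $p=3$ (with $4=1$) or $p=5$ (with $4=-1$). Hence if $q$ is not a power of $3$ or of $5$ there is no common root, so $\gcd(x^{2n}+1,x^{q-1}-2)=1$ and $X$ is nonsingular. The two points requiring care are that $g(\rho)=0$ already pins the $y$-coordinate of a singular point to $0$, so no singularities are overlooked, and the simplification $u(\rho)=\rho^2$ on the locus $\rho^{q-1}=2$, which is exactly what collapses the singularity condition to the clean gcd above; everything else is bookkeeping.
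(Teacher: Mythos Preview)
Your proof is correct and follows essentially the same route as the paper: handle the points at infinity via the $(w,z)$-chart, apply the Jacobian criterion on the affine patch to reduce singularity to common zeros of $g$ and $g'$, factor $g'$ to isolate the condition $\rho^{q-1}=2$, and then simplify $g(\rho)=0$ on that locus to $\rho^{2n}=-1$, yielding the gcd criterion. The only real divergence is in the final characteristic argument: the paper splits into the two cases $q\equiv 1$ and $q\equiv n+1 \pmod{2n}$ and in each case evaluates $\rho^{q-1}$ to force $2=\pm1$ (hence $p=3$) or $4=-1$ (hence $p=5$); you instead square $\rho^{q-1}=2$ and use $q-1=nk$ to get $4=(\rho^{2n})^k=(-1)^k$ in one stroke, which is a bit cleaner and avoids the case distinction.
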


\begin{proof}
The equation $z^n = (1-w^{q-1})^n + w^{nq+n}$ is nonsingular at all points $(w,z) =
(0,\zeta)$ for $\zeta \in \mu_n$, so $X$ is nonsingular at infinity.

To deal with the standard affine patch, we set $g(x) = x^n(x^q-x)^n + 1$. Suppose that $X$ is singular at a point $(\rho, \theta)$. The Jacobian
criterion shows that this is equivalent to
$g(\rho) = g'(\rho) = 0$. Now
\[
g'(x) = nx^n(x^q - x)^{n-1}(x^{q-1} - 2).
\]
The factors $x(x^q-x)$  vanish precisely along $\FF_q$, but $g(x)$ does not
vanish at any such element. So a common solution must satisfy $\rho^{q-1} =
2$. Inserting this into $g(x)$, we find that
\[
0 = g(\rho) = \rho^n\left(\rho^q - \rho\right)^n + 1 = \rho^{2n} + 1.
\]
Hence, $\rho^{2n} = -1$.  These calculations are reversible, so we see that $X$
is nonsingular if and only if $\gcd(x^{2n} + 1, x^{q-1} - 2) = 1$.

Suppose now that $X$ is singular, and let $\rho \in \bar \FF_q$ satisfy
$\rho^{2n} = -1$ and $\rho^{q-1} = 2$. As $q \equiv 1 \pmod n$, we observe that
  $q$ is congruent to $1$ or $n+1$ modulo~$2n$. In the former case, we write $q
  = 1 + 2nk$, and find that
\[
2 = \rho^{q-1} = (\rho^{2n})^k = \pm 1.
\]
This can only happen in characteristic~3. If instead $q = n + 1 + 2nk$, then
\[
2 = \rho^{q-1} = (\rho^{2n})^k \cdot \rho^n = \pm \rho^n.
\]
Squaring both side shows that $-1 = \rho^{2n} = 4$. Hence $5 \mid q$. We
conclude that $X$ is nonsingular if $q$ is not a power of $3$ or $5$.
\end{proof}

\begin{remark}
  Whenever the curve $X$ in \eqref{eq:cyclic_ex} is smooth, we can use the
  Hurwitz formula to conclude that it has genus
\[
g(X) = \frac{1}{2}(q+1)(n-1)^2 + \frac{1}{2}(q-1)(n-1),
\]
which is precisely the upper bound given by
Theorem~\ref{thm:genus_bound}. Examples with $q = 3$ and $n = 3$ will be given
in the next section to show that this upper bound is not always attained.
\end{remark}

\begin{theorem}
  \label{thm:char2}
  Fix $s > 1$ and set $q = 2^s$. There exists $\alpha \in \cT_q$ of height
  $\frac{1}{q+1}$ and gonality~2.
\end{theorem}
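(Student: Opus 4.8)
The plan is to exhibit a hyperelliptic curve over $\FF_q$ with exactly $2(q+1)$ rational points, equipped with the two rational functions required by Theorem~\ref{thm:geom_to_arith} in the case $n=2$. Since $q=2^s\geq 4$, fix $a\in\FF_q\smallsetminus\{0,1\}$, put $w(x)=x^q+x+a^2$, and consider
\[
C\colon\quad Y^2+w(x)\,Y=(1+a)\,w(x),
\]
which, after the substitution $z=Y/w(x)$, becomes the Artin--Schreier curve $z^2+z=(1+a)/w(x)$; let $X$ be its smooth proper model. First I would record the basic features of $w$: it is separable (its derivative is $1$), it has no root in $\FF_q$ (as $t^q+t=0$ forces $w(t)=a^2\neq 0$), and each root $\beta$ satisfies $\beta^{q^2}=\beta$ while $\beta\notin\FF_q$, so $w$ has $q$ distinct geometric roots, falling into $q/2$ closed points of degree $2$. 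Consequently $(1+a)/w$ has a simple pole at each geometric root of $w$, so it is not an Artin--Schreier image over $\bar\FF_q(x)$ and $X$ is geometrically irreducible; Riemann--Hurwitz for the degree-$2$ map $x\colon X\to\PP^1$ (wild different exponent $2$ at each of the $q$ branch points) gives $g(X)=q-1\geq 3$.

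Next I would count $\FF_q$-points. The only branch points of $x$ are the geometric roots of $w$, which are non-rational, and $(1+a)/w$ vanishes at $x=\infty$; hence $x$ is unramified over every point of $\PP^1(\FF_q)$. The fiber over $\infty$ is $z^2+z=0$, which splits; and the fiber over $t\in\FF_q$ splits exactly when $\mathrm{Tr}_{\FF_q/\FF_2}\bigl((1+a)/a^2\bigr)=0$. Since $(1+a)/a^2=a^{-2}+a^{-1}$ and $\mathrm{Tr}(y^2)=\mathrm{Tr}(y)$ for all $y\in\FF_q$, this trace equals $\mathrm{Tr}(a^{-2})+\mathrm{Tr}(a^{-1})=0$. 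Thus every one of the $q+1$ fibers over $\PP^1(\FF_q)$ splits, giving $\#X(\FF_q)=2(q+1)$ --- the largest possible for a double cover of $\PP^1$. In particular $X$ is not rational, so its gonality is exactly $2$.

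The heart of the argument is producing a function $u$ on $X$ of degree $2(q+1)$ that vanishes on all of $X(\FF_q)$ and, together with $x$, generates $\kappa(X)$; note that the obvious candidate $x^q+x$ belongs to $\FF_q(x)$ and so fails the generation condition. Write $\infty_1,\infty_2$ for the two points over $x=\infty$, labeled so that $z\to 0$ at $\infty_1$. A short divisor computation gives $\mathrm{div}(z)=q\,\infty_1-\sum_\rho R_\rho$, where $R_\rho$ denotes the ramification point above a root $\rho$ of $w$, and hence $\mathrm{div}(Y)=\mathrm{div}(zw)=\sum_\rho R_\rho-q\,\infty_2$. Now set
\[
\psi=x^{q+1}+x^2+Y=x(x^q+x)+Y,\qquad u=\frac{x^q+x}{\psi}.
\]
From $\mathrm{div}(Y)$ one reads off that $\psi$ has pole divisor exactly $(q+1)(\infty_1+\infty_2)$; and at the two rational points above any $t\in\FF_q$ one finds $\psi=a^2z_0$ and $a^2(z_0+1)$, where $z_0^2+z_0=(1+a)/a^2$, both nonzero (the term $x(x^q+x)$ was inserted precisely because it vanishes on $\FF_q$, canceling $t^{q+1}+t^2$). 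Therefore $\mathrm{div}(\psi)_0$ is an effective divisor of degree $2(q+1)$ disjoint from $X(\FF_q)$, and
\[
\mathrm{div}(u)=\sum_{P\in X(\FF_q)}P-\mathrm{div}(\psi)_0,
\]
so $u$ has degree $2(q+1)$ and vanishes exactly along $X(\FF_q)$. Finally $\psi=(x^q+x)/u$ recovers $Y=\psi-x^{q+1}-x^2$, so $\FF_q(u,x)=\FF_q(x,Y)=\kappa(X)$, and $u,x$ are separable ($x$ is a separating element, and $u$ is not a square in $\kappa(X)$ since it has simple zeros). Theorem~\ref{thm:geom_to_arith} with $n=2$ and $v=x$ then shows that $x$ generates a totally $u$-adic extension of $\FF_q(u)$ of degree $2(q+1)$ with $h(x)=1/(q+1)$; taking $T=u$, we obtain $\alpha=x\in\cT_q$ of height $1/(q+1)$ whose associated curve $C_\alpha=X$ has gonality $2$. (Clearing denominators in $\psi=(x^q+x)/u$ yields an explicit minimal polynomial for $\alpha$.)

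The step I expect to be the main obstacle is the design of $\psi$, i.e.\ of the uniformizer $T=u$: one must find a function genuinely involving $Y$ whose pole divisor has degree $2(q+1)$ and whose zero divisor avoids all $2(q+1)$ rational points, and the trick is that adjoining $x(x^q+x)$ --- which kills the $\FF_q$-values of $x$ while supplying poles of order exactly $q+1$ at both points over $\infty$ --- is exactly what moves $Y$ into the right linear system. By contrast, the point count (resting on $\mathrm{Tr}(y^2)=\mathrm{Tr}(y)$, which is what makes the construction uniform in $s$) and the genus and gonality computations are routine.
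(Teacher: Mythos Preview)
Your argument is correct and follows the same overall strategy as the paper: build a hyperelliptic curve over $\FF_q$ with exactly $2(q+1)$ rational points, then manufacture a degree-$2(q+1)$ function vanishing precisely on those points and invoke Theorem~\ref{thm:geom_to_arith}. The details, however, are genuinely different. The paper works with the curve $y^2+(1+\zeta)Q(x)y=\zeta\,x(x^q+x)Q(x)$ for $\zeta\in\FF_q\smallsetminus\FF_2$ and $Q$ monic irreducible of degree $q+1$; this curve has genus $q$, which realizes the upper bound in Theorem~\ref{thm:genus_bound} for $n=2$, and its uniformizer $T=(x^q+x)(y+\zeta)/\bigl(\zeta+(1+\zeta)Q(x)+P(x)\bigr)$ is found by a direct divisor computation. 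Your Artin--Schreier model $z^2+z=(1+a)/(x^q+x+a^2)$ has genus $q-1$ instead, and your point count rests on the pleasant identity $\mathrm{Tr}_{\FF_q/\FF_2}(a^{-2}+a^{-1})=0$, which makes the splitting of every rational fiber automatic rather than engineered by the choice of right-hand side. Your uniformizer $u=(x^q+x)/(x^{q+1}+x^2+Y)$ is arguably cleaner, and the trick of adding $x(x^q+x)$ to $Y$ so that $\psi$ acquires poles of order exactly $q+1$ at both points over $\infty$ while vanishing nowhere on $X(\FF_q)$ is the same kind of divisor bookkeeping the paper does, just on a different curve. Either construction suffices; the paper's has the virtue of hitting the genus upper bound, while yours gives a slightly smaller-genus example and a more transparent reason for the rational-point count.
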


\begin{proof}
  Choose $\zeta \in \FF_q \smallsetminus \FF_2$, and let $Q \in \FF_q[x]$ be a monic
  irreducible polynomial of degree $q+1$. Set
  \[
  P(x) = x(x^q+x)Q(x). 
  \]
  We define a hyperelliptic curve with affine equation
  \[
    X_{/\FF_q} : y^2 + (1+\zeta) Q(x)y = \zeta P(x).
  \]
  An equation for $X$ near infinity is given by setting $z = y/x^{q+1}$ and $w = 1/x$:
  \[
    z^2 + (1+\zeta) Q^*(w)z = \zeta P^*(w),
    \]
  where $Q^*(w) = w^{\deg(Q)}Q(1/w)$ is the reversed polynomial for $Q$, and
  similarly for $P$. One verifies readily that $X$ is nonsingular of
  genus~$q$ \cite[Rem.~7.4.25]{Qing_Liu_Algebraic_Geometry}.

  Since $P(u) = 0$ for all $u \in \FF_q$ and $Q(u) \ne 0$ for $u \in \FF_q$, we
  see immediately that $X$ admits $2q$ affine rational points. At infinity, we
  observe that $P^*(0) = Q^*(0) = 1$, so that the two points at infinity are
  $(0,\theta)$, where $\theta$ satisfies the equation $z^2 + (1+\zeta)z + \zeta
  = 0$. Evidently, the solutions to this are $\theta = 1, \zeta$; write
  $\infty^+$ and $\infty^-$ for these two points, respectively.  Thus, we find
  that $\#X(\FF_q) = 2(q+1)$.

  In order to invoke Theorem~\ref{thm:geom_to_arith} (with $n = 2$), we must
  produce a function $T$ such that $X$ has function field $\FF_q(x,T) =
  \FF_q(x,y)$ and the divisor of zeros of $T$ is precisely $X(\FF_q)$. We claim that
  \[
    T = \frac{(x^q+x)(y+\zeta)}{\zeta + (1+\zeta) Q(x) + P(x)}
    \]
    is such a function. Write $R(x) = \zeta + (1+\zeta) Q(x) + P(x)$. One finds that
  \begin{eqnarray*}
    \div(x^q + x) &=& \sum_{u \in \FF_q} (u,0) + \big(u,(1+\zeta)Q(u)\big) - q\ \infty^+ - q\ \infty^- \\
    \div(y+\zeta) &=& \sum_{R(u) = 0} (u,\zeta) - (q+1)\ \infty^+ - (q+1)\ \infty^- \\
    \div(R(x)) &=& \sum_{R(u) = 0} (u,\zeta) + \big( u, \zeta+(1+\zeta)Q(u)\big) - (2q+2)\ \infty^+ - (2q+2)\ \infty^-.
  \end{eqnarray*}
  Combining these gives
  \[
  \div(T) = \sum_{p \in X(\FF_q)} p - \sum_{R(u) = 0} \big( u, \zeta+(1+\zeta)Q(u)\big).
  \qedhere
  \]  
\end{proof}

\begin{remark}
  The curve constructed in the proof of Theorem~\ref{thm:char2} has genus~$q$,
  which agrees with the upper bound given by Theorem~\ref{thm:genus_bound} in the case $n = 2$. 
\end{remark}

Theorem~\ref{thm:qr_arithmetic} applies with $n = 2$ for any odd prime power $q$, and Theorem~\ref{thm:char2} takes care of gonality $2$ when $q$ is even:

\begin{corollary}
Let $q > 2$ be a prime power. There exists $\alpha \in \cT_q$ of height
  $\frac{1}{q+1}$ and gonality~2.
\end{corollary}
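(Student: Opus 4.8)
The plan is simply to combine the two preceding theorems via a case analysis on the parity of $q$. Since $q > 2$, the prime power $q$ is either odd, or else equal to $2^s$ for some integer $s > 1$; these two cases are mutually exclusive and together exhaust all prime powers exceeding $2$.

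First I would dispatch the odd case. When $q$ is odd we have $q \equiv 1 \pmod 2$, so Theorem~\ref{thm:qr_arithmetic} applies with $n = 2$: a root $\alpha$ of
\[
T^2 x^2 (x^q - x)^2 - (x^q - x)^2 + T^2 \in \FF_q(T)[x]
\]
is totally $T$-adic with height $1/(q+1)$ and gonality $2$, which is exactly the asserted conclusion. Next I would handle the even case: if $q$ is even and $q > 2$, then $q = 2^s$ with $s > 1$, which is precisely the hypothesis of Theorem~\ref{thm:char2}; that result directly furnishes $\alpha \in \cT_q$ of height $1/(q+1)$ and gonality $2$ (arising from the hyperelliptic curve constructed in its proof).

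There is no genuine obstacle here, since all of the substance is already contained in Theorems~\ref{thm:qr_arithmetic} and~\ref{thm:char2}; the only point to check is that the parameter ranges of those two results jointly cover $\{q : q > 2\}$. This holds because $n = 2$ is an admissible value in Theorem~\ref{thm:qr_arithmetic} exactly when $q \equiv 1 \pmod 2$, i.e.\ for every odd $q$, while $q = 2^s$ with $s > 1$ describes exactly the even prime powers greater than $2$. (The excluded value $q = 2$ is genuinely special: the nonexistence results of \S\ref{sec:non-existence} show there is no totally $T$-adic function over $\FF_2(T)$ of minimum height and gonality $2$.)
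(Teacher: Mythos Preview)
Your proposal is correct and follows exactly the approach the paper takes: split on the parity of $q$, invoke Theorem~\ref{thm:qr_arithmetic} with $n=2$ for odd $q$, and invoke Theorem~\ref{thm:char2} for even $q>2$. The paper states this immediately before the corollary rather than as a formal proof, but the content is identical.
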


In Section~\ref{sec:non-existence}, we will give a computational proof that the
corollary does not extend to the case $q = 2$.

%%%%%%%%%%%%%%%%%%%%%%%%%%%%%%%%%%%%%%%%%%%%%%%%%%%%%%%%%%%%%%%%%%%%%%%%%%%%%%%%
%%%%%%%%%%%%%%%%%%%%%%%%%%%%%%%%%%%%%%%%%%%%%%%%%%%%%%%%%%%%%%%%%%%%%%%%%%%%%%%%

\section{Calculations: examples for $q = 3$ and $n = 3$}
\label{sec:naive_algorithm}

In this section, we leverage the description of minimal polynomials of totally
$T$-adic functions of height $\frac{1}{q+1}$ in
Proposition~\ref{prop:newton_criterion} to give a naive algorithm for finding
examples.  It is not particularly efficient because it randomly searches an
enormous haystack with a small number of needles. However, it does succeed at
finding examples when $q = 3$ and $n = 3$, which none of the constructions in
the previous section could do.

\begin{algorithm}[ht]
  \begin{flushleft}
    \textbf{Input.} a prime power $q$, a  positive integer $n$ (gonality),
    and a  positive integer $B$ (search bound)

    \medskip
    
    \textbf{Output.} an irreducible polynomial in $\FF_q[T][x]$ that splits
    completely over $\FF_q\Ps{T}$ and whose roots have height $\frac{1}{q+1}$,
    or None if no such polynomial is located after $B$ tries

    \bigskip

    For $j = 1, \ldots, B$:

    \medskip

    \begin{enumerate}
    \item For $i = 1, \ldots, n$ and $u \in \FF_q$, select $a_i, b_{u,i} \in
      \FF_q[T]$ uniformly at random with nonzero constant coefficient such that
      $\deg(a_i) \leq n$ and $\deg(b_{u,i}) \leq n-1$.
    
    \item Construct the product
    \[
    f_0(x) = \prod_{i=1}^n \left(Tx - a_i\right)
    \prod_{u \in \FF_q} \prod_{i=1}^n \left(x - u - Tb_{u,i}\right).
    \]
    Write $f_0(x) = f(x) + T^{n+1} \ g(x)$ for some polynomials $f,g \in \FF_q[T,x]$,
    where $\deg_T(f) \leq n$.
    \item For each $u \in \FF_q$, check if the Newton polygon of $f(x+u)$ with
      respect to $\ord_0$ has vertices $(0,n)$, $(n,0)$, $(nq,0)$, and
      $(n(q+1),n)$. If not, continue.
    \item If $f$ is not irreducible, continue.
    \item If $f$ splits completely over $\FF_q\Ps{T}$, return $f$.
    \end{enumerate}
    \medskip
    Return None.
  \end{flushleft}
\caption{--- Randomly search for elements of $\cT_q$ of minimum positive height and gonality $n$}
\label{alg:Fq}
\end{algorithm}

The first two steps of Algorithm~\ref{alg:Fq} construct a polynomial $f \in
\FF_q[T][x]$ with leading term $T^nx^{n(q+1)}$ and with $\deg_T(f) = n$. The
tests in Step~(3) are very fast, and Proposition~\ref{prop:newton_criterion} and
Corollary~\ref{cor:r=n} show that they must be satisfied by the minimal
polynomial of a totally $T$-adic function of height~$\frac{1}{q+1}$ and
gonality~$n$. Evidently, Steps~(4) and~(5) ensure that the output of
Algorithm~\ref{alg:Fq} is correct. To test if a given $f \in \FF_q[T][x]$ is
irreducible, one can use Gr\"obner basis techniques to compute the primary
decomposition of the ideal $(f) \subset \FF_q[T,x]$.  This is implemented in
Singular, and consequently in Sage \cite{sagemath.9.1}. To determine if $f$
splits completely over $\FF_q{\Ps{T}}$, one can use MacLane's algorithm
\cite{MacLane2} to decompose the ideal $T\FF_q[T]$ in the extension $\FF_q(T)[x]
/ (f)$, which is also implemented in Sage.

We ran Algorithm~\ref{alg:Fq} with $q = 3$, $n = 3$, and $B = 2^{20}$ and
discovered the following example in approximately 20 minutes:
{\small
\begin{align*}
&T^{3} x^{12} + 2 T^{2} x^{11} + \left(2 T^{3} + 2 T\right) x^{10} + \left(T^{2}
  + 1\right) x^{9} + \left(T^{2} + T\right) x^{8} + \left(T^{3} + 2 T^{2}\right)
  x^{7} \\ 
  & + \left(2 T^{3} + 2 T\right) x^{6} + 2 T^{3} x^{5} + \left(2 T^{3} + 2\right) x^{3} + \left(2 T^{2} + T\right)
  x^{2} + \left(T^{3} + T^{2}\right) x + 2 T^{3}.
\end{align*}
} The smooth projective algebraic curve with this plane model has
genus~6.

As the algorithm is non-deterministic, other runs required anywhere from a few
seconds to upwards of an hour to find an example. After a number of additional
runs of Algorithm~\ref{alg:Fq} with $q = 3$ and $n = 3$, we found a more compact
example:
{\small
\[
  T^3 x^{12} + T^2 x^{11} + (2 T^3 + 2 T) x^{10} + 2x^9 + (T^3 + 2T)x^8 + 2Tx^6 + (2T^2 + 1)x^3 + T^3x^2 + 2T^3
\]
}
The smooth projective algebraic curve with this plane model has genus~4.

\begin{remark}
Theorem~\ref{thm:genus_bound} shows that the curve associated to a
minimum-height totally $T$-adic function over $\FF_3(T)$ with gonality~3 has
genus $3 \leq g \leq 10$.
\end{remark}

%%%%%%%%%%%%%%%%%%%%%%%%%%%%%%%%%%%%%%%%%%%%%%%%%%%%%%%%%%%%%%%%%%%%%%%%%%%%%%%%
%%%%%%%%%%%%%%%%%%%%%%%%%%%%%%%%%%%%%%%%%%%%%%%%%%%%%%%%%%%%%%%%%%%%%%%%%%%%%%%%

\section{More Calculations: no example for $q = 2$ and $n = 2,3,4$}
\label{sec:non-existence}

In this section, we give an algorithm for locating \textit{all} functions of
minimum height in $\cT_2$ of fixed gonality $n$. It fails to find any when $n =
2, 3, 4$, and it shows that the proof of Theorem~\ref{thm:min_height}
essentially identified all examples when $n = 1$. Before describing the
algorithm in full, we illustrate it in the case $n = 2$.

\begin{theorem}
  \label{thm:no_q=2,n=2}
  There does not exist a totally $T$-adic function over $\FF_2(T)$ with height
  $1/3$ and gonality $2$.
\end{theorem}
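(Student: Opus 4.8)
The plan is to combine the structural constraints of Proposition~\ref{prop:newton_criterion} and Corollary~\ref{cor:r=n} to reduce the problem to a finite search, and then to clear that list. Suppose for contradiction that $\alpha \in \cT_2$ has $h(\alpha) = 1/3$ and gonality $2$, with minimal polynomial $f \in \FF_2[T][x]$. By Corollary~\ref{cor:r=n} the number $r$ of roots of $f$ with positive $T$-adic valuation equals the gonality, namely $2$, so Proposition~\ref{prop:newton_criterion} forces $\deg_x f = 6$, $\deg_T f = 2$, leading coefficient $T^2$, constant coefficients of both $f(x)$ and $f(x+1)$ equal to $T^2$, and the Newton polygon of each of $f(x)$ and $f(x+1)$ equal to the lower convex hull of $(0,2),(2,0),(4,0),(6,2)$. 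Writing $f = T^2 x^6 + a_5 x^5 + a_4 x^4 + a_3 x^3 + a_2 x^2 + a_1 x + T^2$, these conditions force $\deg_T a_i \le 2$, $\ord_0 a_1 \ge 1$, $\ord_0 a_5 \ge 1$, $a_2 \equiv a_4 \equiv 1 \pmod T$, and (from the constant term of $f(x+1)$) $T \mid a_3$ together with two linear relations among the $a_i$. The upshot is an explicit family of a few hundred candidate polynomials over $\FF_2$.

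I would then prune using symmetry. The height is $\PGL_2(\bar\FF_q)$-invariant (Lemma~\ref{lem:invariant}), and $\PP^1(\cT_2) = \cT_2 \cup \{\infty\}$ is stable under $\PGL_2(\FF_2) \cong S_3$, which permutes $\{0,1,\infty\}$ and carries minimal polynomials of height-$1/3$ elements to minimal polynomials of height-$1/3$ elements: the transposition $\alpha \mapsto \alpha+1$ is already reflected in the $f(x+1)$ conditions, and $\alpha \mapsto 1/\alpha$ replaces $f$ by its reciprocal polynomial, so the candidate list shrinks by up to a factor of $6$. For each remaining $f$ one runs two tests: irreducibility over $\FF_2(T)$, and complete splitting over $\FF_2\Ls{T}$. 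Complete splitting can be checked by iteratively refining the Newton polygons of the reductions of $f(x+u)$ modulo powers of $T$ and applying Hensel's lemma, or --- exactly as in Sections~\ref{sec:naive_algorithm} and~\ref{sec:non-existence} --- via MacLane's algorithm together with a primary decomposition of the ideal $(f) \subset \FF_2[T,x]$ in Sage. One verifies that no candidate is simultaneously irreducible and totally $T$-adic, which is the desired contradiction.

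As a conceptual cross-check, note that $C_\alpha$ would, by Theorem~\ref{thm:genus_bound} with $q = n = 2$, have genus $g$ satisfying $3/(2\sqrt2) \le g \le 2$, hence $g = 2$, and by Theorem~\ref{thm:point_count} it would have exactly $6$ rational points. Thus $C_\alpha$ is a genus-$2$ curve over $\FF_2$ with $6$ rational points (in fact the maximum possible), necessarily hyperelliptic, with essentially unique degree-$2$ map $x$ to $\PP^1$. By Theorem~\ref{thm:geom_to_arith}, the existence of $\alpha$ would require a degree-$6$ function $T$ on $C_\alpha$ whose divisor of zeros is the sum of the six rational points and which, together with $x$, generates $\FF_2(C_\alpha)$ --- equivalently, is not a pullback along $x$. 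Since genus-$2$ curves over $\FF_2$ with $6$ points form a short explicit list, one can rule this out curve by curve by analyzing the linear system $|\sum_P P|$.

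The main obstacle, either way, is that the argument bottoms out in a finite but not wholly trivial verification. On the polynomial route this is the enumeration and elimination of the few-hundred candidate $f$; for $n=2$ it is small enough to organize essentially by hand, but it is really the smallest instance of the machine search described in Section~\ref{sec:non-existence}, and writing it out is largely a matter of tabulation. On the geometric route the obstacle is the characteristic-$2$ bookkeeping: pinning down precisely which genus-$2$ curves over $\FF_2$ attain the point count $6$ (their hyperelliptic models are of Artin--Schreier type $y^2 + h(x)y = f(x)$, so the ramification analysis needs care), and then checking on each that no suitable function $T$ exists. I expect the polynomial-enumeration route to give the cleanest writeup, since it parallels the algorithm used for $n = 3, 4$.
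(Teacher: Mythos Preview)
Your approach is essentially the paper's: reduce to a finite list via the linear constraints coming from Proposition~\ref{prop:newton_criterion} and Corollary~\ref{cor:r=n}, then eliminate each candidate. The geometric cross-check you sketch is not used in the paper, but your polynomial route matches theirs.

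Where you lose efficiency is in the linear constraints you extract. You use only the Newton-polygon shape of $f(x)$ and $f(x+1)$ together with the constant-term condition, arriving at a few hundred candidates. The paper squeezes more out of Corollary~\ref{cor:well-distributed}: since over $\FF_2$ a root reducing to $u$ must in fact satisfy $\beta = u + T + O(T^2)$ (the only nonzero leading coefficient is $1$), and a root reducing to $\infty$ satisfies $\beta = 1/T + O(1)$, one can write $f$ in factored form and read off the extra conditions
\[
\ord_0 f(1/T) \ge -2, \qquad \ord_0 f(T) \ge 4, \qquad \ord_0 f(T+1) \ge 4.
\]
These are still linear in the coefficients $a_{i,j}$ and cut the solution space down to dimension $3$, i.e.\ eight candidates, of which four die because $f(1)=0$ and the remaining four are dispatched by hand (two are squares, one visibly factors, one fails the splitting test modulo $T$ after a shift). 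So the paper's proof is entirely by hand, with no $\PGL_2$ symmetry reduction and no machine search needed for $n=2$; the evaluation-at-approximate-roots trick is the step you are missing to make the list short enough for that.
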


\begin{proof}
 Assume for the sake of a contradiction that there is a function of height $1/3$
 and gonality~2. We let $f \in \FF_2[T][x]$ be its minimal polynomial. Then the
 following properties hold for $f$:
 \begin{enumerate}
    \item $\deg_x(f) = 6$ and $\deg_T(f) = 2$;
    \item The leading and constant coefficients of $f$ are $T^2$;   
    \item The Newton polygon of $f(x)$ with respect to
    $\ord_0$ has vertices $(0,2)$, $(2,0)$, $(4,0)$, and $(6,2)$;
    \item $\ord_0\left(f(1/T)\right) \geq -2$; and
    \item $\ord_0\left(f(T+u)\right) \geq 4$ for $u = 0,1$.
\end{enumerate}
The first, second, and third properties are an immediate consequence of
Proposition~\ref{prop:newton_criterion}.  The fourth and fifth properties follow
upon evaluating $f$ in factored form using the information in
Corollary~\ref{cor:well-distributed}:
\[
f(x) = T^2 \prod_{i=1}^2 \left(x + \frac{1}{T} + b_i \right)
\prod_{i=3}^4 
\left(x + 1 + T + T^2b_i\right)
\prod_{i=5}^6
\left(x + T + T^2b_i \right)
\]
where $b_i \in \FF_2\Ps{T}$. 

Properties (1) and (2) show that $f$ must have the form
\[
f(x) = T^2 x^6 + A_5(T) x^5 + A_4(T)x^4 + A_3(T)x^3 + A_2(T)x^2 + A_1(T)x + T^2,
\]
where $A_i(T) = a_{i,2}T^2 + a_{i,1}T + a_{i,0}$ for $i = 1, \ldots, 5$. The
remaining properties of $f$ yield linear conditions on the coefficients
$a_{i,j}$. Property (3) implies that
\begin{eqnarray*}
  a_{5,0} &=& 0 \\
  a_{4,0} &=& 1 \\
  a_{2,0} &=& 1 \\
  a_{1,0} &=& 0.
\end{eqnarray*}
Evaluating at $x = 1/T$ gives
\[
f(1/T) = a_{5,0}\frac{1}{T^5} + (1 + a_{4,0} + a_{5,1})\frac{1}{T^4}
+ (a_{3,0} + a_{4,1} + a_{5,2})\frac{1}{T^3} + (a_{2,0} + a_{3,1} + a_{4,2})\frac{1}{T^2} + \cdots
\]
Property (4) gives the linear conditions
\begin{eqnarray*}
  a_{5,0} &=& 0 \\
  a_{4,0} + a_{5,1} &=& 1 \\
  a_{3,0} + a_{4,1} + a_{5,2} &=& 0.
\end{eqnarray*}
Evaluating at $x = T$ gives
\[
f(T) = a_{1,0}T + (1 + a_{1,1} + a_{2,0})T^2 + (a_{1,2} + a_{2,1} + a_{3,0})T^3
+ (a_{2,2} + a_{3,1} + a_{4,0})T^4 + \cdots 
\]
Property (5) gives the conditions
\begin{eqnarray*}
  a_{1,0} &=& 0 \\
  a_{1,1} + a_{2,0} &=& 1 \\
  a_{1,2} + a_{2,1} + a_{3,0} &=& 0.
\end{eqnarray*}
Similarly, evaluating at $x = T+1$ gives the conditions
\begin{eqnarray*}
  a_{1,0} + a_{2,0} + a_{3,0} + a_{4,0} + a_{5,0} &=& 0 \\
  a_{1,0} + a_{1,1} + a_{2,1} + a_{3,0} + a_{3,1} + a_{4,1} + a_{5,0} + a_{5,1} &=& 0 \\
  a_{1,1} + a_{1,2} + a_{2,0} + a_{2,2} + a_{3,0} + a_{3,1} + a_{3,2} + a_{4,2} + a_{5,1} + a_{5,2} &=& 0 \\
  a_{1,2} + a_{2,1} + a_{3,0} + a_{3,1} + a_{3,2} + a_{5,2} &=& 0
\end{eqnarray*}

Solving this system of 14 inhomogeneous equations gives a 3-dimensional space of
solutions, and consequently $8 = 2^3$ possibilities for $f$. Four of them have
the property that $f(1) = 0$, which violates the irreducibility of $f$. The
remaining polynomials are
\begin{eqnarray*}
& T^{2} x^{6} + \left(T^{2} + 1\right) x^{4} + x^{2} + T^{2} \\
& T^{2} x^{6} + x^{4} + \left(T^{2} + 1\right) x^{2} + T^{2}\\
& T^{2} x^{6} + T^{2} x^{5} + \left(T^{2} + T + 1\right) x^{4} + T^{2} x^{3} + \left(T^{2} + T + 1\right) x^{2} + T^{2} x + T^{2} \\
& T^{2} x^{6} + T^{2} x^{5} + \left(T + 1\right) x^{4} + T^{2} x^{3} + \left(T + 1\right) x^{2} + T^{2} x + T^{2}.
\end{eqnarray*}
The first two of these are perfect squares, while the third factors as
\[
\left(T x^3 + x^2 + (T+1)x + T\right)\left(Tx^3 + (T+1)x^2 + x + T\right).
\]
If $f$ is the fourth polynomial, then
\[
T^2f\left(\frac{1+Tx}{T}\right) \pmod T = x^2 + x + 1,
\]
so that $f$ cannot factor completely over $\FF_2\Ps{T}$. (Alternatively, the
fourth polynomial has three unramified quadratic places above $T$, as can be
verified, for example, by the method of MacLane \cite{MacLane2}).

In summary, we have shown that no polynomial has the requisite properties to be
the minimal polynomial of a totally $T$-adic function with height $1/3$ and
gonality~2.
\end{proof}

The strategy of the above proof generalizes immediately:

\begin{algorithm}[ht]
  \begin{flushleft}
    \textbf{Input.} a positive integer $n$

    \medskip
    
    \textbf{Output.} a complete list of polynomials $f \in \FF_2[T][x]$ such
    that $f$ is the minimal polynomial of a totally $T$-adic function with
    height~$\frac{1}{3}$ and gonality~2

    \bigskip

    Initialize an empty list $L$. \\

    \medskip

    Define polynomials $A_i(T) = a_{i,0} + a_{i,1}T + \cdots + a_{i,n}T^n$ for $i = 1, \ldots, 3n-1$ with generic coefficients.  Set
    \[
      \tilde f(x) = T^n x^{3n} + A_{3n-1} x^{3n-1} + A_{3n-2} x^{3n-2} + \cdots
      + A_1 x + T^n.
    \]

    \medskip

    Obtain a system of linear equations in the coefficients $a_{i,j}$ by
    imposing the following conditions:
    \begin{itemize}
    \item $T^{n-i} \mid A_i$ for $i = 1, \ldots, n-1$
    \item $T^{n-i} \mid A_{3n-i}$ for $i = 1, \ldots, n-1$
    \item $A_n(0) = 1$ and $A_{2n}(0) = 1$
    \item $\ord_0 f(1/T) \geq -n$
    \item $\ord_0 f(T) \geq 2n$ and $\ord_0 f(T+1) \geq 2n$
    \end{itemize}
    Solve this system to get a set $S$ of polynomials.

    \medskip

    For each $f \in S$:
    \begin{enumerate}
      \item If $f$ is not irreducible, continue.
      \item If $f$ splits completely over $\FF_2\Ps{T}$, append $f$ to $L$.
    \end{enumerate}

    \medskip

  \end{flushleft}
\caption{--- Find all elements of $\cT_2$ of minimum positive height and gonality $n$}
\label{alg:F2}
\end{algorithm}

\begin{theorem}
  Let $\alpha \in \overline{\FF_2(T)}$ be a root of the polynomial $Tx^3 + x^2 +
  (T+1)x + T$. Then the following six functions are the only elements of $\cT_2$
  with gonality~1 and height $\frac{1}{3}$:
  \[
  \alpha, \ \frac{1}{\alpha+1}, \ \frac{\alpha+1}{\alpha}, \ \frac{1}{\alpha}, \
  \alpha + 1, \ \frac{\alpha}{\alpha+1}.
  \]
\end{theorem}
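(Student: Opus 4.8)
The plan is to run Algorithm~\ref{alg:F2} with $n=1$ and argue that it recovers exactly these six functions, then to identify the output set with the $\PGL_2(\FF_2)$-orbit of $\alpha$. First I would set up the generic polynomial in the $n=1$ case: with gonality one, Proposition~\ref{prop:newton_criterion} forces $\deg_x(f)=q+1=3$ and $\deg_T(f)=1$, with leading and constant coefficients equal to $T$, so
\[
f(x) = Tx^3 + A_2(T)x^2 + A_1(T)x + T, \qquad A_i(T) = a_{i,0} + a_{i,1}T.
\]
There are only four free parameters $a_{1,0},a_{1,1},a_{2,0},a_{2,1}\in\FF_2$, hence at most $16$ candidates. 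I would then impose the linear conditions coming from the Newton polygon data of Corollary~\ref{cor:well-distributed}: the Newton polygon of $f(x)$ must have vertices $(0,1),(1,0),(2,0),(3,1)$, which pins down $A_1(0)=0$ and $A_2(0)=1$; the condition $\ord_0 f(1/T)\geq -1$ from the root reducing to $\infty$; and $\ord_0 f(T)\geq 2$, $\ord_0 f(T+1)\geq 2$ from the roots reducing to $0$ and $1$. These are a small inhomogeneous linear system over $\FF_2$, which I would solve explicitly to cut the $16$ candidates down to a short list.

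Next I would run the two post-processing steps of Algorithm~\ref{alg:F2} on that short list: discard any $f$ that is reducible over $\FF_2[T][x]$ (which rules out the $f$ with $f(1)=0$ or $f(0)=0$ as $x$-polynomials, and any that factor into lower-degree pieces), and then among the survivors check, via Hensel's lemma or the reduction trick used in the proof of Theorem~\ref{thm:no_q=2,n=2} (i.e.\ examining $T^{\deg}f((a+Tx)/T)\bmod T$ for the appropriate shifts), whether $f$ splits completely over $\FF_2\Ps{T}$. I expect the unique surviving polynomial to be $g(x) = Tx^3 + x^2 + (T+1)x + T$ — this is exactly the polynomial appearing in the statement of Theorem~\ref{thm:min_height} for $q=2$ (with $c=1$, since $x^2+x+1$ is the Artin–Schreier obstruction), so its total $T$-adicity and irreducibility are already established there and need not be reproven.

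Finally, having shown that every minimal polynomial of a gonality-$1$, height-$\tfrac13$ element of $\cT_2$ equals $g$, it follows that every such element is a root of $g$, hence lies in $\FF_2(T)(\alpha)$. Since $\cT_2$ is a field containing $\FF_2$ (Proposition~\ref{prop:Tq_is_a_field}) and $\PP^1(\cT_2)$ is stable under $\PGL_2(\FF_2)$, the six elements $\alpha,\tfrac1{\alpha+1},\tfrac{\alpha+1}{\alpha},\tfrac1\alpha,\alpha+1,\tfrac\alpha{\alpha+1}$ all lie in $\cT_2$; by Lemma~\ref{lem:invariant} they all have height $\tfrac13$, and being contained in a degree-$3$ function field of gonality $1$ they all have gonality $1$. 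I would check that these six values are distinct and are precisely the three roots of $g$ together with the three roots of the reciprocal polynomial $g^*$ — equivalently, that they form a single $\PGL_2(\FF_2)\cong S_3$ orbit — so they exhaust the root set of $g$ under the allowed Möbius transformations and hence account for all elements whose minimal polynomial is $g$. The main obstacle is the last split-completely check: verifying by hand that none of the other linear-system survivors splits over $\FF_2\Ps{T}$ requires a small but careful Newton-polygon/Hensel analysis in the spirit of the $n=2$ computation, and making sure the list of survivors is genuinely complete (no candidate overlooked) is where I would be most careful.
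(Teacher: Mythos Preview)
Your approach is the same as the paper's, but your prediction of the outcome is wrong in a way that breaks the logical chain. Two corrections:

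First, a small slip: the Newton polygon vertex at $(1,0)$ forces $\ord_0(A_1)=0$, i.e.\ $A_1(0)=1$, not $A_1(0)=0$. After the Newton polygon constraints you are left with $f(x)=Tx^3+(1+a_{2,1}T)x^2+(1+a_{1,1}T)x+T$, two free bits.

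Second, and more importantly, the algorithm does \emph{not} produce a unique survivor. The conditions $\ord_0 f(1/T)\ge -1$ and $\ord_0 f(T)\ge 2$ turn out to be vacuous here, while $\ord_0 f(T+1)\ge 2$ gives the single relation $1+a_{1,1}+a_{2,1}=0$. That leaves \emph{two} polynomials,
\[
g(x)=Tx^3+x^2+(T+1)x+T \quad\text{and}\quad g^*(x)=Tx^3+(T+1)x^2+x+T,
\]
and both are irreducible and split completely over $\FF_2\Ps{T}$ (the second is literally the reciprocal of the first, so it inherits both properties). Hence your sentence ``having shown that every minimal polynomial \ldots\ equals $g$'' is false as stated; the correct conclusion is that every such minimal polynomial is $g$ or $g^*$. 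Once you make that fix, your final paragraph is exactly right: the three roots of $g$ are $\alpha,\ 1/(\alpha+1),\ (\alpha+1)/\alpha$, the three roots of $g^*$ are $1/\alpha,\ \alpha+1,\ \alpha/(\alpha+1)$, and together they give the six listed functions. There is no need for any further ``split-completely check'' to eliminate candidates --- nothing else survives the linear system.
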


\begin{proof}
  We walk through Algorithm~\ref{alg:F2} in the case $n = 1$. The Newton polygon
  restrictions on the polynomial $f$ show that it must have the form
  \[
  f(x) = Tx^3 + (1 + a_{2,1}T)x^2 + (1 + a_{1,1}T)x + T.
  \]
  A brief computation shows that the conditions $\ord_0 f(1/T) \geq -1$ and
  $\ord_0 f(T) \geq 2$ are superfluous, while the condition $\ord_0 f(T+1) \geq
  2$ implies that $1 + a_{1,1} + a_{2,1} = 0$. The two solutions are
  \[
  Tx^3 + x^2 + (T+1)x + T \qquad \text{and} \qquad
  Tx^3 + (T+1)x^2 + x + T.
  \]
  The proof of Theorem~\ref{thm:min_height} shows that $Tx^3 + x^2 + (T+1)x + T$
  is irreducible and splits completely over $\FF_2\Ps{T}$. Let $\alpha$ be a
  root of this polynomial. One easily verifies that $1/(\alpha+1)$ and
  $(\alpha+1) / \alpha$ are the other two roots, and that the three elements
  $1/\alpha$, $\alpha+1$, and $\alpha / (\alpha + 1)$ satisfy the polynomial
  $Tx^3 + (T+1)x^2 + x + T$.
\end{proof}

Let us now analyze the size of the search space in Algorithm~\ref{alg:F2}. The
total number of coefficients $a_{i,j}$ in Algorithm~\ref{alg:F2} is
$(3n-1)(n+1)$. The Newton polygon conditions impose $n(n-1) + 2$ independent
linear conditions on these coefficients. Having imposed these, any resulting
polynomial will have $n$ roots with valuation~1, $n$ roots with valuation~$-1$,
and $n$ roots with valuation~$0$.

We may assume that $n > 1$. The condition $\ord_0 f(1/T) \geq -n$ insures that
for any root $\alpha$ with valuation~$-1$, the coefficient on $T^{-1}$ in its
$T$-adic expansion is~1. This yields $n$ new independent linear
conditions. Similarly, the condition $\ord_0 f(T) \geq 2n$ gives $n$ independent
linear conditions. Finally, the condition $\ord_0 f(T+1) \geq 2n$ insures that
for any of the $n$ roots $\alpha$ with valuation~0, the $T$-adic expansion has
the form $1 + T + \cdots$, which amounts to $2n$ independent linear
conditions. In total, we find that the $\FF_2$-dimension of the affine search
space $S$ is
\[
  (3n-1)(n+1) - [n(n-1) + 2] - n - n - 2n = 2n^2 - n - 3.
\]

We implemented Algorithm~\ref{alg:F2} in Sage \cite{sagemath.9.1} and carried it
out for the cases $n = 2, 3, 4$. 

\begin{theorem}
  \label{thm:no_q=2,n=3,4}
  There does not exist a totally $T$-adic function over $\FF_2(T)$ with height
  $1/3$ and gonality $2$, $3$, or $4$.
\end{theorem}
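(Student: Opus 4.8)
The plan is to execute Algorithm~\ref{alg:F2} for each gonality $n \in \{2,3,4\}$ and observe that the output list $L$ is empty in every case. The gonality~$2$ case is already proved by hand in Theorem~\ref{thm:no_q=2,n=2}, where the search space has only eight elements; the substance of the present statement is therefore the cases $n = 3$ and $n = 4$, which we carry out by computer.

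First, for fixed $n$, introduce the generic polynomial $\tilde f(x) = T^n x^{3n} + A_{3n-1}x^{3n-1} + \dots + A_1 x + T^n$ with $A_i(T) = a_{i,0} + a_{i,1}T + \dots + a_{i,n}T^n$, regarding the $(3n-1)(n+1)$ quantities $a_{i,j} \in \FF_2$ as unknowns. Proposition~\ref{prop:newton_criterion} and Corollary~\ref{cor:well-distributed} then impose a system of $\FF_2$-linear conditions on the $a_{i,j}$: the Newton polygon of $f$ must have vertices $(0,n),(n,0),(2n,0),(3n,n)$ (equivalently, divisibility of the low- and high-order $A_i$ by appropriate powers of $T$, together with $A_n(0) = A_{2n}(0) = 1$); and $\ord_0 f(1/T) \geq -n$, $\ord_0 f(T) \geq 2n$, $\ord_0 f(T+1) \geq 2n$, encoding that each root with valuation $-1$ has leading $1/T$-coefficient~$1$ and each root reducing to $0$ (resp.\ $1$) agrees with $T$ (resp.\ $1 + T$) through two terms. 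As computed in the discussion preceding the theorem, the solution set $S$ is an affine $\FF_2$-space of dimension $2n^2 - n - 3$, so $|S| = 2^{2n^2 - n - 3}$, which is $8$ for $n = 2$, $2^{12}$ for $n = 3$, and $2^{25} \approx 3.4 \times 10^7$ for $n = 4$.

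For each $f \in S$ one then applies the two remaining steps of Algorithm~\ref{alg:F2}: test irreducibility of $f$ in $\FF_2[T][x]$ via a primary decomposition of the ideal $(f) \subset \FF_2[T,x]$ (Gr\"obner bases in Singular/Sage), and, for the $f$ that pass, test whether $f$ splits completely over $\FF_2\Ps{T}$ by decomposing the place $T$ in $\FF_2(T)[x]/(f)$ with MacLane's algorithm. The assertion to be verified by the machine is that, for $n = 2, 3, 4$, no $f \in S$ survives both tests; hence $L = \emptyset$ and no totally $T$-adic function over $\FF_2(T)$ of height $1/3$ and gonality $n$ exists.

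The only real obstacle is \emph{computational}, and it is concentrated in the case $n = 4$: the set $S$ has more than thirty million members, and both the primary-decomposition irreducibility test and the MacLane splitting test are relatively costly per polynomial. To make this tractable one arranges the filters in increasing order of expense --- first discard the many $f$ possessing an obvious $\FF_2$-rational root, then run the irreducibility test, and invoke MacLane's finer factorization of the fibre above $T$ only on the survivors --- and iterates over the affine space $S$ without materializing it. Correctness of the conclusion depends solely on Proposition~\ref{prop:newton_criterion} and Corollary~\ref{cor:well-distributed} (which guarantee that $S$ contains every candidate minimal polynomial) together with the correctness of the irreducibility and splitting routines; nothing further is needed.
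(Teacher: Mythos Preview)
Your proposal is correct and follows essentially the same approach as the paper: both establish the result by executing Algorithm~\ref{alg:F2} for $n=2,3,4$ and verifying that the output list is empty, with the $n=2$ case already handled by the hand computation in Theorem~\ref{thm:no_q=2,n=2}. The only cosmetic difference is in how the $n=4$ computation is made tractable---the paper parallelizes across $32$ processors, whereas you suggest ordering the filters by cost and discarding polynomials with $\FF_2$-rational roots early---but these are implementation details that do not affect the mathematical content or correctness of the argument.
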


We have already described in detail the case $n = 2$ above. The computation for
$n = 3$ requires one to check $2^{12}$ polynomials; it took approximately 3
minutes on a 2.6 GHz Intel Core i5 processor with 16GB of RAM.

The computation for $n=4$ requires one to look at $2^{25}$ polynomials. Naively,
one might expect this to take $2^{13} = 8192$ times longer than the $n = 3$
computation, or around 2 weeks. However, checking for irreducibility and total
splitting becomes more onerous as the degree increases, so it would actually
take significantly longer if we ran it on one processor. Instead, we distributed
the computation across $32$ CPUs --- Xeon(R) E5-2699 v3 \@ 2.30GHz with 500GB
memory --- each running its own Sage process and handling $2^{20}$
polynomials. In this way, we reduced the wall clock time to just under 1 day.

The computation for $n = 5$ seems completely infeasible.

%%%%%%%%%%%%%%%%%%%%%%%%%%%%%%%%%%%%%%%%%%%%%%%%%%%%%%%%%%%%%%%%%%%%%%%%%%%%%%%%
%%%%%%%%%%%%%%%%%%%%%%%%%%%%%%%%%%%%%%%%%%%%%%%%%%%%%%%%%%%%%%%%%%%%%%%%%%%%%%%%

\section{Dynamics: examples of small height}
\label{sec:small_height}

Our goal for this section is to prove the following result:

\begin{theorem}
  \label{thm:liminf}
  Let $q$ be a prime power. Then
  \[
  \frac{1}{q+1} \leq \liminf_{\alpha \in \cT_q} h(\alpha) \leq \frac{1}{q-1}.
  \]
\end{theorem}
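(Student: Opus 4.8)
The lower bound is immediate: by Theorem~\ref{thm:height_bound} every \emph{nonconstant} $\alpha\in\cT_q$ satisfies $h(\alpha)\ge\frac1{q+1}$, and a totally $T$-adic constant lies in the algebraic closure of $\FF_q$ inside $\FF_q\Ls{T}$, which is just $\FF_q$; so the constants in $\cT_q$ form a finite set, every enumeration of $\cT_q$ has a tail consisting of nonconstant elements, and hence $\liminf_{\alpha\in\cT_q}h(\alpha)\ge\frac1{q+1}$. The work is in the upper bound, and the plan is to use the dynamics of $\phi(x)=\frac1T(x^q-x)\in\FF_q(T)[x]$.

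First I would establish the basic dynamical facts. The map $\phi$ carries $\FF_q\Ps{T}$ into itself, since $x^q-x$ vanishes on the residue field $\FF_q$; and it is exactly $q$-to-$1$ onto $\FF_q\Ps{T}$, because for $\gamma\in\FF_q\Ps{T}$ the polynomial $x^q-x-T\gamma$ reduces mod $T$ to $\prod_{u\in\FF_q}(x-u)$, whose $q$ simple roots lift uniquely by Hensel's lemma. Hence for every $m\ge1$ the fiber $\phi^{-m}(1)$ consists of exactly $q^m$ points of $\FF_q\Ps{T}$; since these are the roots of the degree-$q^m$ polynomial $\phi^m(x)-1$, that polynomial is separable and splits completely over $\FF_q\Ps{T}\subset\FF_q\Ls{T}$. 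Consequently every $\beta\in\phi^{-m}(1)$ is totally $T$-adic (its minimal polynomial divides the monic-in-$x$ normalization of $\phi^m(x)-1$), and it is nonconstant, since the forward $\phi$-orbit of a constant either lands on $0$ (if it starts in $\FF_q$) or immediately develops a pole at $T=0$ of ever-increasing order, so never reaches $1$. Finally $1\mapsto0\mapsto0$, so $1$ is strictly preperiodic and the fibers $\phi^{-m}(1)$, $m\ge1$, are pairwise disjoint.

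Next I would compute the height of $g_m(x):=T^{e_m}\bigl(\phi^m(x)-1\bigr)\in\FF_q[T][x]$, where $e_m=1+q+\dots+q^{m-1}=\frac{q^m-1}{q-1}$. From $\phi^m=\phi\circ\phi^{m-1}$ one gets $g_m=P_m-T^{e_m}$ with $P_1=x^q-x$ and $P_m=P_{m-1}^{\,q}-T^{(q-1)e_{m-1}}P_{m-1}$; an induction shows $P_m$ is monic in $x$ of $x$-degree $q^m$ with $\deg_T P_m<e_m$, so that $g_m$ has content~$1$, $\deg_x g_m=q^m$, and $\deg_T g_m=e_m$ (the coefficient of $x^0$ carries $-T^{e_m}$, which dominates in $T$-degree). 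Thus the naive height of $g_m$ is $\frac{e_m}{q^m}=\frac1{q-1}\bigl(1-q^{-m}\bigr)$. I would not try to prove $g_m$ irreducible; instead, factoring $g_m=\prod_k f_k$ into the minimal polynomials of its roots $\gamma_k$ and using the additivity of the local height formula from \S\ref{sec:ff_heights} (as in the averaging argument for Lemma~\ref{lem:weighted_average}) gives $\frac{e_m}{q^m}=\sum_k\frac{\deg_x f_k}{q^m}\,h(\gamma_k)$, a convex combination, so \emph{some} root $\beta_m$ of $g_m$ has $h(\beta_m)\le\frac{e_m}{q^m}<\frac1{q-1}$. Each $\beta_m$ is a nonconstant element of $\cT_q$ lying in $\phi^{-m}(1)$, and these fibers are pairwise disjoint, so $\{\beta_m:m\ge1\}$ is an infinite subset of $\cT_q$ on which $h<\frac1{q-1}$, whence $\liminf_{\alpha\in\cT_q}h(\alpha)\le\frac1{q-1}$.

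I expect the possible reducibility of $g_m$ to be the main obstacle, since one cannot simply declare that the recursively chosen preimages $\alpha_0=1$, $\phi(\alpha_{j+1})=\alpha_j$ have minimal polynomial $g_j$. The convex-combination bound sidesteps this entirely, because $h(g_m)\to\frac1{q-1}$ no matter how $g_m$ factors; alternatively one could prove $g_m$ irreducible and conclude $h(\alpha_m)=e_m/q^m\to\frac1{q-1}$ along the recursive sequence, but that is strictly more than the theorem requires.
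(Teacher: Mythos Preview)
Your proof is correct and uses the same dynamical construction as the paper: iterate $\phi(x)=(x^q-x)/T$ backward from $1$ and control the height of the resulting polynomials $g_m=T^{e_m}(\phi^m(x)-1)$. The one substantive difference is in how you extract an element of small height from $g_m$. The paper proves directly that $g_m$ is irreducible, by computing $\ord_\infty$ of a root and observing that the extension is totally ramified of degree $q^m$ at the place at infinity (Lemma~\ref{lem:irreducible}); this gives the exact value $h(\alpha_m)=e_m/q^m$ for every root, which the paper records separately as Theorem~\ref{thm:liminf-integral}. You instead invoke the weighted-average identity $\deg_T g_m/\deg_x g_m=\sum_k r_k\,h(\gamma_k)$ from the proof of Lemma~\ref{lem:weighted_average} to pick out one root $\beta_m$ with $h(\beta_m)\le e_m/q^m$, and then use disjointness of the fibers $\phi^{-m}(1)$ to guarantee infinitely many such elements. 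Your route is slightly more economical for the theorem as stated, since it bypasses the ramification calculation; the paper's route buys the sharper statement that \emph{every} root of $g_m$ has height exactly $e_m/q^m$, which is of independent interest and is reused later (e.g., in Theorem~\ref{thm:liminf_int}).
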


The proof uses a construction inspired by \cite{Petsche-Stacy}. Throughout this
section, we work with the polynomial
\[
\phi(x) = \frac{x^q - x}{T}. 
\]
For $j \geq 1$, define
\[
f_j(x) = T^{1+q+\cdots+q^{j-1}} \left[\phi^j(x) - 1\right],
\]
where $\phi^j = \phi \circ \cdots \circ \phi$ is the $j$-th iterate of $\phi$. 

\begin{lemma}
  \label{lem:basic}
  For $j \geq 1$, $f_j \in \FF_q[T,x]$ and satisfies the following properties:
  \begin{itemize}
  \item $f_j$ is monic in $x$ of degree $q^j$;
  \item $f_j$ has degree $1 + q + \cdots + q^{j-1}$ in $T$;
  \item $\frac{\partial f_j}{\partial x} = (-1)^jT^{1 + q + \cdots + q^{j-1} - j}$; and
    \item Setting $T = 0$ in $f_j$ gives $\tilde f_j(x) = (x^q - x)^{q^{j-1}}$. 
  \end{itemize}
\end{lemma}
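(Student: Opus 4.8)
The plan is to prove all four properties by induction on $j$, using the recursion $\phi^{j}=\phi\circ\phi^{j-1}$ together with the explicit form $\phi(x)=(x^{q}-x)/T$. For the base case $j=1$ we have $f_{1}(x)=T\bigl[\phi(x)-1\bigr]=x^{q}-x-T$, which is visibly monic of degree $q$ in $x$, of degree $1$ in $T$, has derivative $-1=(-1)^{1}T^{0}$, and reduces mod $T$ to $x^{q}-x$; all four assertions hold.

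For the inductive step, write $e_{j}=1+q+\cdots+q^{j-1}$, so $e_{j}=1+q\,e_{j-1}$. The key identity is
\[
f_{j}(x)=T^{e_{j}}\bigl[\phi\bigl(\phi^{j-1}(x)\bigr)-1\bigr]
=T^{e_{j}}\cdot\frac{\phi^{j-1}(x)^{q}-\phi^{j-1}(x)}{T}-T^{e_{j}}
=T^{q\,e_{j-1}}\bigl[\phi^{j-1}(x)^{q}-\phi^{j-1}(x)\bigr]-T^{e_{j}}.
\]
Now substitute $\phi^{j-1}(x)=T^{-e_{j-1}}f_{j-1}(x)+1$ from the definition of $f_{j-1}$. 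Expanding $\bigl(T^{-e_{j-1}}f_{j-1}+1\bigr)^{q}$ using the Frobenius identity $(a+b)^{q}=a^{q}+b^{q}$ in characteristic $p$, the inner bracket becomes $T^{-q\,e_{j-1}}f_{j-1}^{q}+1-T^{-e_{j-1}}f_{j-1}-1=T^{-q\,e_{j-1}}f_{j-1}^{q}-T^{-e_{j-1}}f_{j-1}$. Multiplying by $T^{q\,e_{j-1}}$ and subtracting $T^{e_{j}}$ gives the clean recursion
\[
f_{j}(x)=f_{j-1}(x)^{q}-T^{q\,e_{j-1}-e_{j-1}}f_{j-1}(x)-T^{e_{j}}
=f_{j-1}(x)^{q}-T^{(q-1)e_{j-1}}f_{j-1}(x)-T^{e_{j}}.
\]
This explicit polynomial recursion is the engine for everything. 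From it: $f_{j}\in\FF_{q}[T,x]$ since the exponents $(q-1)e_{j-1}$ and $e_{j}$ are nonnegative integers; the $x$-degree is $q\cdot q^{j-1}=q^{j}$ with leading coefficient $1^{q}=1$ (the other two terms have strictly smaller $x$-degree, namely $q^{j-1}$ and $0$), giving monicity; the $T$-degree is $\max\bigl(q\,e_{j-1},\,(q-1)e_{j-1}+e_{j-1},\,e_{j}\bigr)=\max(q\,e_{j-1},q\,e_{j-1},e_{j})=q\,e_{j-1}=e_{j}-1<e_{j}$... wait, this should equal $e_{j}$, so here I must be careful: actually $e_{j}=1+q\,e_{j-1}>q\,e_{j-1}$, so the $T$-degree comes from the lone term $-T^{e_{j}}$, confirming $\deg_{T}f_{j}=e_{j}$.

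For the derivative, differentiate the recursion with respect to $x$: since $\tfrac{\partial}{\partial x}\bigl(f_{j-1}^{q}\bigr)=q\,f_{j-1}^{q-1}f_{j-1}'=0$ in characteristic $p$, and $\tfrac{\partial}{\partial x}\bigl(T^{e_{j}}\bigr)=0$, we get $f_{j}'=-T^{(q-1)e_{j-1}}f_{j-1}'$. By induction $f_{j-1}'=(-1)^{j-1}T^{e_{j-1}-(j-1)}$, so $f_{j}'=(-1)^{j}T^{(q-1)e_{j-1}+e_{j-1}-(j-1)}=(-1)^{j}T^{q\,e_{j-1}-j+1}=(-1)^{j}T^{e_{j}-j}$, as claimed. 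Finally, setting $T=0$ in the recursion kills both $T$-power terms and yields $\tilde f_{j}(x)=\tilde f_{j-1}(x)^{q}=\bigl((x^{q}-x)^{q^{j-2}}\bigr)^{q}=(x^{q}-x)^{q^{j-1}}$ by the inductive hypothesis.

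The main obstacle is purely bookkeeping: establishing the recursion $f_{j}=f_{j-1}^{q}-T^{(q-1)e_{j-1}}f_{j-1}-T^{e_{j}}$ cleanly, which hinges on the Frobenius expansion $(a+b)^{q}=a^{q}+b^{q}$ to linearize $\phi^{j-1}(x)^{q}$ and on correctly tracking the exponent identity $e_{j}=1+q\,e_{j-1}$. Once that recursion is in hand, each of the four bulleted properties drops out by a one-line induction, so I would lead with a careful derivation of the recursion and then dispatch the four claims in quick succession.
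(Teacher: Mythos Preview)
Your argument is correct. The paper also argues by induction, but via the recursion $f_{j+1}(x)=T^{q^{j}}f_{j}\bigl(\phi(x)\bigr)$, obtained from the decomposition $\phi^{j+1}=\phi^{j}\circ\phi$ rather than your $\phi^{j}=\phi\circ\phi^{j-1}$. Your closed-form recursion $f_{j}=f_{j-1}^{\,q}-T^{(q-1)e_{j-1}}f_{j-1}-T^{e_{j}}$ is a genuine polynomial identity in $\FF_{q}[T,x]$, so integrality, the $T$-degree, and the reduction modulo $T$ are read off directly; with the paper's recursion one must instead track the negative power of $T$ hidden in $\phi(x)=(x^{q}-x)/T$ when checking those same items. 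The two routes are equivalent in spirit and length, just organized around opposite ends of the composition. (Your parenthetical ``wait'' is mathematically fine---you correctly identify that the lone $-T^{e_{j}}$ term dominates since $e_{j}=1+q\,e_{j-1}>q\,e_{j-1}$---but you would want to excise the hesitation in a polished version.)
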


\begin{proof}
  All of these follow by induction and the formula
  \[
f_{j+1}(x) = T^{q^j} f_j\left(\phi(x)\right) \qedhere
  \]
\end{proof}

\begin{lemma}
  \label{lem:irreducible}
  For $j \geq 1$, $f_j$ is irreducible over $\FF_q(T)$, and its splitting field
  is totally $T$-adic.
\end{lemma}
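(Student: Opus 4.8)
The plan is to prove both statements essentially from the structure of $\phi$ as a $q$-to-$1$, everywhere-unramified self-map of the $T$-adic open unit disk, together with the degree/ramification data recorded in Lemma~\ref{lem:basic}. First I would handle the splitting: the polynomial $\phi(x) = (x^q - x)/T$ maps $\FF_q\Ps{T}$ onto itself, and for any $a \in \FF_q\Ps{T}$ the equation $\phi(x) = a$ has $q$ solutions in $\FF_q\Ps{T}$, because reducing mod $T$ gives $x^q - x = 0$, which has $q$ simple roots $u \in \FF_q$, and Hensel's lemma lifts each one uniquely (the derivative $\phi'(x) = (qx^{q-1}-1)/T = -1/T$ has $T$-adic valuation... one must be slightly careful here since $\phi'$ is not a unit; better to note $x^q - x - Ta$ has derivative $-1$, a unit, so Hensel applies directly to $x^q - x - Ta$). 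Starting from $\alpha_0 = 1 \in \FF_q\Ps{T}$ and inductively solving $\phi(\alpha_{j}) = \alpha_{j-1}$, one sees every element of $\phi^{-j}(1)$ lies in $\FF_q\Ps{T}$; since $f_j$ is (up to the unit $T^{1+q+\cdots+q^{j-1}}$, which does not affect roots) the polynomial whose roots are exactly $\phi^{-j}(1)$, it splits completely over $\FF_q\Ls{T}$, hence its splitting field is totally $T$-adic.

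For irreducibility over $\FF_q(T)$, I would argue that $f_j$ is irreducible over the larger field $\FF_q\bar\Ps{T}$... no — rather, the cleanest route is to show the Galois group of $f_j$ acts transitively on the $q^j$ roots. The key structural fact is that $\phi$ gives a tower: if $\beta$ is any root of $f_j$, then $\phi(\beta)$ is a root of $f_{j-1}$, so $\FF_q(T) \subset \FF_q(T,\phi^{j-1}(\beta)) \subset \cdots \subset \FF_q(T,\beta)$, and each step has degree at most $q$ (since $x^q - x - T\cdot(\text{previous level})$ has degree $q$). Thus $[\FF_q(T,\beta):\FF_q(T)] \le q^j$, and equality — which forces irreducibility since $\deg_x f_j = q^j$ — will follow if each step is exactly degree $q$, i.e. if $x^q - x - T\gamma$ is irreducible over $\FF_q(T,\gamma)$ whenever $\gamma$ is a root of $f_{j-1}$. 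This is an Artin–Schreier-type statement; I would prove it by a valuation argument. Alternatively, and perhaps more simply, use the third bullet of Lemma~\ref{lem:basic}: $\partial f_j/\partial x = (-1)^j T^{N-j}$ where $N = 1 + q + \cdots + q^{j-1}$, so $f_j$ is separable over $\FF_q(T)$, and its discriminant is (up to sign and the resultant computation) a power of $T$ times a nonzero constant; then analyze the place $T = 0$ or, better, the place $T = \infty$.

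Actually the most robust approach, and the one I would commit to: prove irreducibility by looking at a single well-chosen place. At $T = \infty$ I would compute the Newton polygon (or use the substitution $T \mapsto 1/S$ and clear denominators) to show $f_j$ has a totally ramified place above $T = \infty$ of ramification index $q^j$ — equivalently, that some root $\beta$ has $\mathrm{ord}_\infty(\beta) = -1/q^j$ after extending the valuation, or more precisely that $\phi$ near $\infty$ looks like $x \mapsto x^q/T \cdot(\text{unit})$, so pulling back $q^j$ times produces a single place with $e = q^j$. A place with ramification index equal to $\deg_x f_j$ forces $f_j$ to be irreducible over $\FF_q(T)$ (the local degree $q^j$ is a lower bound for the global degree, which is at most $q^j$). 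The main obstacle I anticipate is the bookkeeping in this ramification computation: one must track how the valuation at $\infty$ (or at whatever auxiliary place we pick) propagates up the tower $\phi^{-1}$, and confirm there is genuinely only one place rather than several, which amounts to checking that the relevant residual polynomial at each stage is itself irreducible (e.g.\ of Artin–Schreier type $z^q - z = c$ with $c$ a nonsquare-in-the-Artin–Schreier-sense residue). If the residual extension ever splits, the transitivity argument breaks, so verifying the residual polynomials stay irreducible up the whole tower is the crux; I expect Lemma~\ref{lem:basic}'s control of $\deg_T f_j$ and of the reduction $\tilde f_j = (x^q-x)^{q^{j-1}}$ to be exactly what makes this tractable.
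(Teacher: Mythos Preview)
Your approach is essentially the paper's: Hensel's lemma applied to $x^q - x - Ta$ (derivative $-1$) handles the totally $T$-adic claim exactly as you describe, and irreducibility is obtained from total ramification at the place $T = \infty$. The paper's execution of the ramification step is simpler than you anticipate: an inductive Newton-polygon computation gives $\ord_\infty(\alpha_j) = -(1 + q + \cdots + q^{j-1})/q^j$ (not $-1/q^j$, though both have denominator exactly $q^j$), and a valuation with denominator $q^j$ already forces ramification index $\ge q^j$ at any place above $\infty$, hence local and global degree exactly $q^j$ --- so the residual-polynomial irreducibility check you are worried about is not needed.
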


\begin{proof}
The previous lemma shows that $f_j$ has $x$-degree $q^j$. Let
$\alpha_j$ be a root of $f_j$. We will show that
\[
\ord_\infty(\alpha_j) = -\frac{1}{q} - \frac{1}{q^2} - \cdots -\frac{1}{q^j}.
\]
It follows that $\alpha_j$ generates a totally ramified extension of degree
$q^j$ of the Laurent series field $\FF_q\Ls{T^{-1}}$, and hence $f_j$ is
irreducible over $\FF_q(T)$.

We proceed by induction. Since $f_1(x) = x^q - x - T$, the Newton
polygon for $f_1$ shows that $\ord_\infty(\alpha_1) = -1/q$.

Now suppose that $\ord_\infty(\alpha_j)$ is as expected. Since $f_{j+1}(x) =
T^{q^j}f_j(\phi(x))$, it follows that $\phi(\alpha_{j+1})$ is a root
of $f_j$. In particular, we may assume that $\phi(\alpha_{j+1}) =
\alpha_j$ in what follows. The Newton polygon of
\[
\phi(x) - \alpha_j = x^q / T - x / T - \alpha_j
\]
with respect to $\ord_\infty$ shows that every root has valuation
\[
(\ord_\infty(\alpha_j) - 1) / q = -\frac{1}{q} - \cdots - \frac{1}{q^{j+1}}.
\]
This completes the induction.

Now we prove that the splitting field of $f_j$ is totally $T$-adic. To that end,
it suffices to show that $f_j$ splits completely over $\FF_q\Ps{T}$. For $j =
1$, we see that $f_1(x) = x^q - x - T$. The elements of $\FF_q$ satisfy this
equation modulo~$T$; since its derivative is $f_1'(x) = -1$, Hensel's lemma
lifts each of these roots uniquely to $\FF_q\Ps{T}$. Now suppose that $f_j$
splits completely over $\FF_q\Ps{T}$ for some $j \geq 1$, and let $\alpha$ be a
root of $f_{j+1}$. As this element is constructed via iterated pre-images, there
is some root $\beta$ of $f_j$ such that $\phi(\alpha) = \beta$. Now $\alpha$
satisfies the equation $x^q - x - T\beta$. Exactly as in the case $j = 1$, each
element of $\FF_q$ satisfies this equation modulo~$T$, and each solution lifts
via Hensel's lemma. Hence, $\alpha \in \FF_q\Ps{T}$. This completes the proof.
\end{proof}

Theorem~\ref{thm:liminf} is an immediate consequence of
Theorem~\ref{thm:height_bound} and the following height estimate:

\begin{theorem}
  \label{thm:liminf-integral}
  Let $\alpha_j$ be a root of $f_j$. Then
\[
h(\alpha_j) = \frac{1}{q-1} + o(1) \text{ as $j \to \infty$}.
\]  
\end{theorem}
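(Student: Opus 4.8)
The plan is to recognize $f_j$ as the normalized minimal polynomial of $\alpha_j$ in the sense of \S\ref{sec:ff_heights}, and then simply read the height off its bidegree. First I would verify the four defining conditions of a minimal polynomial: by Lemma~\ref{lem:basic} we have $f_j \in \FF_q[T][x]$; by Lemma~\ref{lem:irreducible} it is irreducible as a polynomial in $x$; it vanishes at $\alpha_j$ by construction; by Lemma~\ref{lem:basic} it is monic in $x$, so its leading coefficient is the constant polynomial $1$, which is monic and which forces the coefficients of $f_j$ (viewed in $\FF_q[T]$) to have no common factor. Hence $f_j$ is precisely the minimal polynomial of $\alpha_j$ (and the choice of root is immaterial, since $f_j$ is irreducible), so that $h(\alpha_j) = \deg_T(f_j)/\deg_x(f_j)$.

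Next I would substitute the two degree formulas from Lemma~\ref{lem:basic}, namely $\deg_x(f_j) = q^j$ and $\deg_T(f_j) = 1 + q + \cdots + q^{j-1}$, and sum the geometric series:
\[
h(\alpha_j) = \frac{1 + q + \cdots + q^{j-1}}{q^j} = \frac{q^j - 1}{(q-1)q^j} = \frac{1}{q-1} - \frac{1}{(q-1)q^j}.
\]
Since $\frac{1}{(q-1)q^j} \to 0$ as $j \to \infty$, this is exactly $\frac{1}{q-1} + o(1)$. (As a byproduct one sees that the sequence $h(\alpha_j)$ is strictly increasing and bounded above by $\frac{1}{q-1}$.) Together with Lemma~\ref{lem:irreducible}, which guarantees each $\alpha_j$ is totally $T$-adic of degree $q^j \to \infty$, and with Theorem~\ref{thm:height_bound} for the lower bound, this gives Theorem~\ref{thm:liminf}.

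There is no genuine obstacle remaining at this point: all the substantive work has been front-loaded into Lemmas~\ref{lem:basic} and~\ref{lem:irreducible} (the degree bookkeeping for the iterates $f_{j+1}(x) = T^{q^j} f_j(\phi(x))$, the irreducibility via total ramification at $\ord_\infty$, and the complete splitting via repeated Hensel lifting). The only point meriting a sentence of care is confirming that the recursively defined $f_j$ is literally the paper's normalized minimal polynomial — monic leading coefficient, no common factor among coefficients — rather than merely some polynomial annihilating $\alpha_j$; once that is noted, the height computation is the one-line geometric-series identity above.
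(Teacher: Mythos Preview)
Your proposal is correct and follows essentially the same route as the paper: verify that $f_j$ is the normalized minimal polynomial of $\alpha_j$ (using Lemmas~\ref{lem:basic} and~\ref{lem:irreducible}), then read off $h(\alpha_j) = \deg_T(f_j)/\deg_x(f_j)$ and evaluate the geometric series. Your write-up is slightly more explicit than the paper's about checking the minimal-polynomial conditions and about the exact closed form $\frac{1}{q-1} - \frac{1}{(q-1)q^j}$, but the argument is the same.
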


\begin{proof}
 Since $f_j$ is irreducible, monic, and has coefficients in $\FF_q[T]$, it is
 the minimal polynomial for $\alpha_j$. Thus, we see that
  \[
h(\alpha_j) = \frac{\deg_T(f_j)}{\deg_x(f_j)} = \frac{1 + q + \cdots + q^{j-1}}{q^j} = \frac{1}{q-1} + o(1). \qedhere
\]
\end{proof}

%%%%%%%%%%%%%%%%%%%%%%%%%%%%%%%%%%%%%%%%%%%%%%%%%%%%%%%%%%%%%%%%%%%%%%%%%%%%%%%%
%%%%%%%%%%%%%%%%%%%%%%%%%%%%%%%%%%%%%%%%%%%%%%%%%%%%%%%%%%%%%%%%%%%%%%%%%%%%%%%%

\section{The integral and unit cases}

Write $\cR_q$ for the set of functions $\alpha \in \cT_q$ that are
\textbf{$T$-adic integers}: $\ord_0(\beta) \geq 0$ for all Galois conjugates
$\beta$ of $\alpha$. Equivalently, $\alpha \in \cT_q$ is a $T$-adic integer if
and only if its minimal polynomial has the form
\[
f(x) = a_d x^d + \cdots + a_0 \qquad \text{with $a_i \in \FF_q[T]$ and $a_d(0) \ne
  0$}.
\]
(Note that the ring of $T$-adic integers contains all totally $T$-adic functions
that are integral over $\FF_q[T]$.)
The ultrametric inequality shows that $\cR_q$ is a subring of $\cT_q$. 

Write $\cR_q^\times$ for the unit group of $\cR_q$, which we refer to as
\textbf{$T$-adic units}. We claim that $\alpha \in \cR_q^\times$ if and only if
$\ord_0(\beta) = 0$ for all Galois conjugates $\beta$ of $\alpha$. Indeed, if
$\alpha \in \cR_q^\times$, then its inverse must lie in $\cR_q$, and so all of
its conjugates must have valuation~0. Conversely, suppose that all of the Galois
conjugates of $\alpha \in \cR_q$ have valuation~0 with respect to
$\ord_0$. Write $f$ for the minimal polynomial of $\alpha$. The Newton polygon
of $f$ with respect to $\ord_0$ is a single horizontal segment. The reversed
polynomial $f^*(x) = x^{\deg(f)}f(1/x)$ is the minimal polynomial of
$\alpha^{-1}$, and its Newton polygon must also be a horizontal segment. In
particular, the leading coefficient of $f^*$ has valuation~0, so we conclude
that $\alpha^{-1} \in \cR_q$. That is, $\alpha$ is a $T$-adic unit.

One can bound the heights of $T$-adic integers and units using a geometric
argument as in Theorem~\ref{thm:height_bound}, but it is slightly more
involved. Instead, we give an algebraic proof as in \S\ref{sec:min_pols}. 

\begin{theorem}
  \label{thm:Pottmeyer-integers-units}
  Let $q$ be a prime power. 
  \begin{enumerate}
    \item For any $\alpha \in \cR_q \smallsetminus \FF_q$, we have
  \[
  h(\alpha) \geq \frac{1}{q},
  \]
  and this inequality is sharp.
    \item For any $\alpha \in \cR_q^\times \smallsetminus \FF_q$, we have
  \[
  h(\alpha) \geq \frac{1}{q-1},
  \]
  and this inequality is sharp.
  \end{enumerate}
\end{theorem}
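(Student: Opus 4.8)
\emph{Approach.} The plan is to rerun the analytic argument from the proof of Theorem~\ref{thm:height_bound}, discarding the terms that the integrality hypotheses force to vanish, and then to exhibit explicit minimal polynomials attaining equality. Note first that a constant element of $\cT_q$ lies in $\FF_q$ (its conjugates are constants of $\FF_q\Ls{T}$, hence lie in $\FF_q$), so the hypothesis ``$\alpha \in \cR_q \smallsetminus \FF_q$'' just means $\alpha$ is nonconstant.

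\emph{Lower bounds.} Let $\alpha$ be nonconstant, put $d = [\FF_q(T)(\alpha):\FF_q(T)]$, and let $r$ be the number of Galois conjugates of $\alpha$ with positive $T$-adic valuation. If $\alpha \in \cR_q$, then no conjugate has negative valuation, so exactly $d - r$ conjugates have valuation $0$. Lemma~\ref{lem:positive-valuation} applied to $\alpha$ gives $h(\alpha) \geq r/d$, and Lemma~\ref{lem:zero-valuation} (applicable because a nonconstant $\alpha$ is neither $0$ nor a member of $\FF_q^\times$) gives $(q-1)\,h(\alpha) \geq (d-r)/d$; adding these yields $q\,h(\alpha) \geq 1$. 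If moreover $\alpha \in \cR_q^\times$, then every conjugate has valuation $0$, so $r = 0$ and Lemma~\ref{lem:zero-valuation} alone gives $(q-1)\,h(\alpha) \geq 1$. This proves both inequalities.

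\emph{Sharpness.} For the first bound I would take $\alpha$ to be a root of $f_1(x) = x^q - x - T$, the $j = 1$ member of the family of \S\ref{sec:small_height}: by Lemma~\ref{lem:irreducible} it is irreducible over $\FF_q(T)$ and splits completely over $\FF_q\Ps{T}$, it is monic with coefficients in $\FF_q[T]$ and hence lies in $\cR_q$, and $h(\alpha) = \deg_T(f_1)/\deg_x(f_1) = 1/q$. For the second bound I would take $\alpha$ to be a root of $f(x) = x^{q-1} - (T+1)$ (so $\alpha = T+1$ when $q = 2$). Since $T+1$ has a simple zero it is not an $\ell$th power in $\FF_q(T)$ for any prime $\ell$, nor does it lie in $-4\,\FF_q(T)^4$, so $f$ is irreducible by the binomial criterion recalled in \S\ref{sec:examples}. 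As $1^{q-1} \equiv T+1 \pmod T$ and the derivative $(q-1)x^{q-2}$ is a unit at $x = 1$, Hensel's lemma provides a root $u \in \FF_q\Ps{T}$ with $u \equiv 1 \pmod T$; the remaining roots are $\zeta u$ with $\zeta$ ranging over the $(q-1)$st roots of unity, all of which lie in $\FF_q^\times$. Hence $f$ splits completely over $\FF_q\Ps{T}$ and each root has valuation $0$, so $\alpha \in \cR_q^\times$ and $h(\alpha) = 1/(q-1)$.

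\emph{Main obstacle.} The summation of the valuation inequalities and the Hensel/Newton-polygon checks are routine given \S\ref{sec:min_pols} and \S\ref{sec:small_height}. The only step that needs something not already in the paper is the sharpness of the \emph{unit} bound: the dynamical family $f_j$ satisfies $h(\alpha_j) \to 1/(q-1)$ but never attains it, so one must supply the explicit degree-$(q-1)$ binomial above and verify that it defines a totally $T$-adic unit. That verification is the principal (if modest) difficulty.
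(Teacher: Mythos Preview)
Your proposal is correct and follows essentially the same route as the paper: the lower bounds are obtained by summing the inequalities of Lemmas~\ref{lem:positive-valuation} and~\ref{lem:zero-valuation} exactly as you do, and your sharpness examples $x^q - x - T$ and $x^{q-1} - (T+1)$ are (up to the harmless substitution $T \mapsto -T$) the same polynomials the paper uses. The only genuine variation is in the irreducibility argument for the unit example: the paper invokes the Newton polygon at $\ord_\infty$ (a single segment of slope $1/(q-1)$ with no interior lattice point), whereas you use the binomial criterion from \S\ref{sec:examples}; both are short and standard.
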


\begin{proof}
We begin with $\alpha \in \cR_q \smallsetminus \FF_q$. Let $r$ be
the number of Galois conjugates of $\alpha$ such that $\ord_0(\alpha) > 0$, and
let $d$ be the degree of $\alpha$ over $\FF_q(T)$. By
Lemma~\ref{lem:positive-valuation}, we find that
\[
h(\alpha) \geq \frac{r}{d}.
\]
Since $\alpha$ is a $T$-adic integer, we see that the remaining conjugates have
valuation zero. Then Lemma~\ref{lem:zero-valuation} shows that
\[
(q-1)h(\alpha) \geq \frac{d-r}{d}.
\]
Summing the two displayed inequalities gives $qh(\alpha) \geq 1$.

To see that this height bound is sharp, let $\alpha$ be a root of the polynomial
\[
f(x) = x^q - x + T.
\]
The case $j = 1$ of Lemma~\ref{lem:irreducible} shows that $f$ is
irreducible over $\FF_q(T)$ and that its splitting field is totally
$T$-adic. Since the leading coefficient of $f$ is~1, $\alpha \in \cR_q$,
and we conclude that
\[
h(\alpha) = \frac{\deg_T(f)}{\deg_x(f)} = \frac{1}{q}. 
\]

Now we turn to the case $\alpha \in \cR_q^{\times} \smallsetminus \FF_q$. Since
every conjugate of $\alpha$ has $T$-adic valuation zero,
Lemma~\ref{lem:zero-valuation} immediately gives the desired lower bound.

To see that this inequality is sharp, let $\alpha$ be a root of the polynomial
  \[
  f(x) = x^{q-1} - 1 + T.
  \]
Hensel's Lemma shows that $f$ splits completely over $\FF_q\Ps{T}$, and the
Newton polygon of $f$ with respect to $\ord_0$ shows that $\alpha \in
\cR_q^\times$.  Moreover, $f$ is irreducible because its Newton polygon with
respect to $\ord_\infty$ is a single segment with no lattice point in its
interior. In particular, $f$ is the minimal polynomial for $\alpha$, and we have
  \[
  h(\alpha) = \frac{\deg_T(f)}{\deg_x(f)} = \frac{1}{q-1}. \qedhere
  \]
\end{proof}

Now we turn to the question of bounding the liminf in the case of $T$-adic
integers and units.

\begin{theorem}
  \label{thm:liminf_int}
  Let $q$ be a prime power. Then 
  \[
  \frac{1}{q} \leq \liminf_{\alpha \in \cR_q} h(\alpha) \leq \frac{1}{q-1}.
  \]
\end{theorem}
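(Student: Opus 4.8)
The plan is to reuse the dynamical construction of \S\ref{sec:small_height} essentially verbatim, after observing that the functions produced there are automatically $T$-adic integers, so that no genuinely new work is needed beyond what was done for Theorem~\ref{thm:liminf}.

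For the lower bound, recall from Theorem~\ref{thm:Pottmeyer-integers-units}(1) that every nonconstant $\alpha \in \cR_q$ satisfies $h(\alpha) \geq 1/q$. The only elements of $\cR_q$ of height strictly less than $1/q$ are therefore the constants in $\FF_q$, a finite set, and a finite set does not affect the liminf. Hence $\liminf_{\alpha \in \cR_q} h(\alpha) \geq 1/q$.

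For the upper bound, I would take $\phi(x) = (x^q - x)/T$ and let $\alpha_j$ be a root of $f_j(x) = T^{1+q+\cdots+q^{j-1}}\left[\phi^j(x)-1\right]$, exactly as in \S\ref{sec:small_height}. By Lemma~\ref{lem:irreducible}, $f_j$ is irreducible and monic, hence the minimal polynomial of $\alpha_j$, and its splitting field is totally $T$-adic; moreover the proof of that lemma builds every root of $f_j$ inside $\FF_q\Ps{T}$ by iterated Hensel lifting of the roots of $x^q - x - T\beta$ starting from $\beta = 1$. Consequently every Galois conjugate of $\alpha_j$ has nonnegative order at $T$, so $\alpha_j \in \cR_q$; and $\alpha_j$ is nonconstant since $\deg_x f_j = q^j > 1$. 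The polynomials $f_j$ have pairwise distinct $x$-degrees, so the $\alpha_j$ give infinitely many distinct elements of $\cR_q$, and by Theorem~\ref{thm:liminf-integral} we have $h(\alpha_j) = (1 + q + \cdots + q^{j-1})/q^j \to 1/(q-1)$. Therefore $\liminf_{\alpha \in \cR_q} h(\alpha) \leq 1/(q-1)$.

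The only step that calls for any attention is the verification that the iterated preimages $\phi^{-j}(1)$ consist entirely of $T$-adic integers, and this is already implicit in the proof of Lemma~\ref{lem:irreducible}: since $\phi$ restricts to a surjection of $\FF_q\Ps{T}$ onto itself and $1 \in \FF_q\Ps{T}$, there is nothing to obstruct it. Thus I do not expect any substantive obstacle here; the argument is a short consequence of results already in hand.
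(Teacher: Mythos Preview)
Your proposal is correct and follows essentially the same approach as the paper: invoke Theorem~\ref{thm:Pottmeyer-integers-units} for the lower bound, and observe that the sequence $(\alpha_j)$ from \S\ref{sec:small_height} already lies in $\cR_q$ to obtain the upper bound from Theorem~\ref{thm:liminf-integral}. The only difference is that you justify $\alpha_j \in \cR_q$ via the Hensel-lifting construction in $\FF_q\Ps{T}$, whereas one could also simply note that $f_j$ is monic (Lemma~\ref{lem:basic}), so its leading coefficient does not vanish at $T=0$.
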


\begin{proof}
  The lower bound is given by Theorem~\ref{thm:Pottmeyer-integers-units}. The
  sequence $(\alpha_j)$ constructed in Section~\ref{sec:small_height} consists
  of $T$-adic integers, so we may reuse the upper bound in
  Theorem~\ref{thm:liminf}.
\end{proof}

\begin{theorem}
  \label{thm:liminf_int_unit}
  Let $q$ be a prime power. Then
  \[
  \frac{1}{q-1} \leq \liminf_{\alpha \in \cR_q^{\times}} h(\alpha) \leq
  \begin{cases}
    \frac{1}{q-2} & \text{ if $q \ne 2$}
    \\
    2 & \text{ if $q = 2$}.
  \end{cases}
  \]
\end{theorem}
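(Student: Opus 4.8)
The lower bound is a formality: by Theorem~\ref{thm:Pottmeyer-integers-units}(2) every nonconstant element of $\cR_q^\times$ has height at least $\frac{1}{q-1}$, and the only constant elements of $\cR_q^\times$ are those of $\FF_q^\times$ (a nontrivial finite extension of $\FF_q$ cannot embed into $\FF_q\Ls{T}$), so only finitely many elements of $\cR_q^\times$ have height below $\frac{1}{q-1}$ and hence $\liminf_{\alpha\in\cR_q^\times}h(\alpha)\geq\frac{1}{q-1}$. For the upper bound when $q\neq2$, I would rerun the dynamical construction of \S\ref{sec:small_height} with $\phi$ replaced by the ``unit'' map
\[
\psi(x)=\frac{x^{q-1}-1}{T}.
\]
If $\beta\in\FF_q\Ps{T}$ is a unit then $\beta^{q-1}-1\equiv0\pmod T$, so $\psi(\beta)\in\FF_q\Ps{T}$, and $x^{q-1}=1+T\beta$ reduces mod $T$ to $x^{q-1}=1$, which has $q-1$ simple roots in $\FF_q^\times$, each lifting by Hensel's lemma to a unit of $\FF_q\Ps{T}$. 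Writing $\psi^{\circ j}(x)=N_j(x)/T^{D_j}$ with $N_j\in\FF_q[T][x]$, the recursion $N_{j+1}=N_j^{\,q-1}-T^{(q-1)D_j}$, $D_{j+1}=(q-1)D_j+1$ (with $N_1=x^{q-1}-1$, $D_1=1$) gives $\deg_x N_j=(q-1)^j$, $D_j=\frac{(q-1)^j-1}{q-2}$, and $\deg_T N_j<D_j$. Setting $g_j(x)=N_j(x)-T^{D_j}$, the cleared form of $\psi^{\circ j}(x)-1$, we obtain a polynomial that is monic in $x$ of degree $(q-1)^j$ with $\deg_T g_j=D_j$.

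There are two points to verify. First, an induction with Hensel's lemma, just as in Lemma~\ref{lem:irreducible}, shows that $g_j$ has $(q-1)^j$ distinct roots in $\FF_q\Ps{T}$, all units; so the splitting field of $g_j$ lies in $\FF_q\Ps{T}$ and every root of $g_j$ belongs to $\cR_q^\times$. Second, I would prove $g_j$ irreducible over $\FF_q(T)$ by passing to the completion at $\infty$: with $\alpha_j$ a compatibly chosen root ($\psi(\alpha_j)=\alpha_{j-1}$, $\alpha_0=1$), I would show by induction that $K_j:=\FF_q\Ls{T^{-1}}(\alpha_j)$ is a totally (tamely) ramified extension of $\FF_q\Ls{T^{-1}}$ of degree $(q-1)^j$. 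Granting this for $j-1$, one checks that $\ord_\infty(\alpha_{j-1})<0$, hence $\ord_\infty(1+T\alpha_{j-1})=\ord_\infty(T\alpha_{j-1})$, and propagating this through the tower shows that $1+T\alpha_{j-1}$ has valuation $-D_j$ in the $\ZZ$-normalized valuation of $K_{j-1}$; since $D_j\equiv1\pmod{q-1}$ and $p\nmid q-1$, the pure polynomial $x^{q-1}-(1+T\alpha_{j-1})$ is therefore irreducible and totally ramified over $K_{j-1}$, giving $[K_j:\FF_q\Ls{T^{-1}}]=(q-1)^j$. It follows that $g_j$ is the minimal polynomial of $\alpha_j\in\cR_q^\times$, so
\[
h(\alpha_j)=\frac{D_j}{(q-1)^j}=\frac{1}{q-2}\Bigl(1-\frac{1}{(q-1)^j}\Bigr)\ \longrightarrow\ \frac{1}{q-2}
\]
as $j\to\infty$, and since the $\alpha_j$ have strictly increasing degrees they are distinct, whence $\liminf_{\alpha\in\cR_q^\times}h(\alpha)\leq\frac{1}{q-2}$. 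The genuinely delicate step is this irreducibility claim: the splitting and unit statements and the degree bookkeeping are routine inductions, but keeping the tower of pure extensions along the $\psi$-preimage tree irreducible is exactly what the $\infty$-adic valuation computation above is for (in particular the congruence $D_j\equiv1\pmod{q-1}$).

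When $q=2$ the map $\psi$ degenerates to a linear polynomial, so I would argue differently, using $\cR_2^\times=1+T\,\cR_2$: if $\beta$ is a totally $T$-adic integer then every conjugate of $1+T\beta$ is $\equiv1\pmod T$, so $1+T\beta\in\cR_2^\times$. Apply this to the sequence $\beta_j$ (a root of $f_j$) from \S\ref{sec:small_height}, consisting of totally $T$-adic integers with $h(\beta_j)=(2^j-1)/2^j$. The minimal polynomial of $\alpha_j:=1+T\beta_j$ is $T^{2^j}f_j\bigl((x+1)/T\bigr)$, which is monic in $x$ of degree $2^j$; its $x^0$-coefficient equals $T^{2^j}f_j(1/T)=\sum_i b_i(T)\,T^{2^j-i}$, where $f_j=\sum_i b_i(T)x^i$, and since $b_0=f_j(0)=T^{2^j-1}$ while $\deg_T b_i\leq 2^j-1$ for all $i$, the $i=0$ term dominates and this coefficient has $T$-degree exactly $2^{j+1}-1$, strictly larger than the $T$-degree of every other coefficient. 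Hence $h(\alpha_j)=(2^{j+1}-1)/2^j\to2$, and since the $\alpha_j$ have strictly increasing degrees, $\liminf_{\alpha\in\cR_2^\times}h(\alpha)\leq2$.
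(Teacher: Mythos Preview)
Your proof is correct and follows essentially the same approach as the paper: the lower bound from Theorem~\ref{thm:Pottmeyer-integers-units}, the dynamical construction with $\psi(x)=(x^{q-1}-1)/T$ for $q\neq2$, and the map $\beta\mapsto1+T\beta$ for $q=2$. The only differences are that you supply considerably more detail in the irreducibility argument at $\infty$ (the paper simply says ``just as in Section~\ref{sec:small_height}'') and, for $q=2$, you compute $h(1+T\beta_j)=(2^{j+1}-1)/2^j$ exactly via the minimal polynomial, whereas the paper obtains the same limit from the soft inequality $h(1+T\beta)\leq1+h(\beta)$ combined with Theorem~\ref{thm:liminf_int}.
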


\begin{proof}
  The lower bound is given by Theorem~\ref{thm:Pottmeyer-integers-units}.

  For any $\beta \in \cR_q$, we find that $1 + T\beta \in
  \cR_q^\times$. Therefore,
  \[
  h(1+T\beta) \leq h(1) + h(T) + h(\beta) = 1 + h(\beta).
  \]
  Theorem~\ref{thm:liminf_int} shows the liminf over $\beta \in \cR_q$ is
  bounded above by $\frac{1}{q-1}$, so we get
  \[
  \liminf_{\alpha \in \cR_q^\times} h(\alpha) \leq 1 + \frac{1}{q-1}.
  \]
  In the case $q = 2$, this gives the upper bound~2, as desired.

  Now suppose that $q \ne 2$. Consider the polynomial
  \[
  \psi(x) = \frac{x^{q-1} - 1}{T}. 
  \]
  For $n \geq 1$, define
  \[
  g_n(x) = T^{1+(q-1)+\cdots+(q-1)^{n-1}} \left[\psi^n(x) - 1\right],
  \]
  where $\psi^n = \psi \circ \cdots \circ \psi$ is the $n$-th iterate of $\psi$.
  Just as in Section~\ref{sec:small_height}, one shows that $g_n$ lies in
  $\FF_q[T][x]$, that it is monic and irreducible, and that it splits completely
  over $\FF_q\Ps{T}$. Let $\beta_n$ be a root of $g_n$. The Newton polygon for
  $g_1$ shows that all of its roots have valuation~0 with respect to
  $\ord_0$. If we assume that all conjugates of $\beta_n$ have valuation~0, then
  the Newton polygon for $\psi(x) - \sigma(\beta_n)$ has a single horizontal
  segment for any Galois automorphism $\sigma$, and hence all conjugates of
  $\beta_{n+1}$ have valuation~0. By induction, $\beta_n \in \cR_q^\times$ for
  all $n$. Finally, one verifies that $h(\beta_n) = \frac{1}{q-2} + o(1)$ as $n
  \to \infty$. (All of this discussion holds when $q = 2$, except that
  $h(\beta_n) \to \infty$.) This establishes the upper bound in the theorem when
  $q \ne 2$.
\end{proof}

%%%%%%%%%%%%%%%%%%%%%%%%%%%%%%%%%%%%%%%%%%%%%%%%%%%%%%%%%%%%%%%%%%%%%%%%%%%%%%%%
%%%%%%%%%%%%%%%%%%%%%%%%%%%%%%%%%%%%%%%%%%%%%%%%%%%%%%%%%%%%%%%%%%%%%%%%%%%%%%%%

\bibliographystyle{plain}
\bibliography{small_height}

\providecommand\biburl[1]{\texttt{#1}}
\begin{thebibliography}{10}

\bibitem{Bombieri-Gubler_2006}
Enrico Bombieri and Walter Gubler.
\newblock {\em Heights in {D}iophantine geometry}, volume~4 of {\em New
  Mathematical Monographs}.
\newblock Cambridge University Press, Cambridge, 2006.

\bibitem{Bombieri_Zannier_Heights_Infinite_Extensions_2001}
Enrico Bombieri and Umberto Zannier.
\newblock A note on heights in certain infinite extensions of {$\mathbb{Q}$}.
\newblock {\em Atti Accad. Naz. Lincei Cl. Sci. Fis. Mat. Natur. Rend. Lincei
  (9) Mat. Appl.}, 12:5--14 (2002), 2001.

\bibitem{BGMR_splitting}
Joe Buhler, Daniel Goldstein, David Moews, and Joel Rosenberg.
\newblock The probability that a random monic {$p$}-adic polynomial splits.
\newblock {\em Experiment. Math.}, 15(1):21--32, 2006.

\bibitem{Fili_totally_p-adic}
Paul Fili.
\newblock On the heights of totally {$p$}-adic numbers.
\newblock {\em J. Th\'{e}or. Nombres Bordeaux}, 26(1):103--109, 2014.

\bibitem{Hartshorne_Bible}
Robin Hartshorne.
\newblock {\em Algebraic geometry}.
\newblock Springer-Verlag, New York, 1977.
\newblock Graduate Texts in Mathematics, No. 52.

\bibitem{Lang_Algebra_2005}
Serge Lang.
\newblock {\em Algebra}, volume 198 of {\em Graduate Texts in Mathematics}.
\newblock Springer, New York, third edition, 2005.

\bibitem{Lehmer_height_conjecture}
D.~H. Lehmer.
\newblock Factorization of certain cyclotomic functions.
\newblock {\em Ann. of Math. (2)}, 34(3):461--479, 1933.

\bibitem{Qing_Liu_Algebraic_Geometry}
Qing Liu.
\newblock {\em Algebraic geometry and arithmetic curves}, volume~6 of {\em
  Oxford Graduate Texts in Mathematics}.
\newblock Oxford University Press, Oxford, 2002.
\newblock Translated from the French by Reinie Ern\'e, Oxford Science
  Publications.

\bibitem{MacLane2}
Saunders MacLane.
\newblock A construction for prime ideals as absolute values of an algebraic
  field.
\newblock {\em Duke Math. J.}, 2(3):492--510, 1936.

\bibitem{Petsche-Stacy}
Clayton Petsche and Emerald Stacy.
\newblock A dynamical construction of small totally {$p$}-adic algebraic
  numbers.
\newblock {\em J. Number Theory}, 202:27--36, 2019.

\bibitem{Pottmeyer_elementary_bound}
Lukas Pottmeyer.
\newblock Small totally {$p$}-adic algebraic numbers.
\newblock {\em Int. J. Number Theory}, 14(10):2687--2697, 2018.

\bibitem{Schinzel_conjugates}
A.~Schinzel.
\newblock On the product of the conjugates outside the unit circle of an
  algebraic number.
\newblock {\em Acta Arith.}, 24:385--399, 1973.
\newblock Collection of articles dedicated to Carl Ludwig Siegel on the
  occasion of his seventy-fifth birthday. IV.

\bibitem{Smyth_totally_real}
C.~J. Smyth.
\newblock On the measure of totally real algebraic integers.
\newblock {\em J. Austral. Math. Soc. Ser. A}, 30(2):137--149, 1980/81.

\bibitem{sagemath.9.1}
{The Sage Developers}.
\newblock {\em {S}ageMath, the {S}age {M}athematics {S}oftware {S}ystem
  ({V}ersion 9.1)}, 2020.
\newblock \url{https://www.sagemath.org}.

\end{thebibliography}
\end{document}